\documentclass[11pt, twoside]{article}

\usepackage[english]{babel}

\usepackage{amssymb}
\usepackage{amsfonts}
\usepackage{amsmath}
\usepackage{amsthm}
\usepackage{color}
\usepackage{mathrsfs}
\usepackage{txfonts}
\usepackage{enumerate}
\usepackage{anysize}
\usepackage{indentfirst}

\usepackage{latexsym}

\usepackage[colorlinks=true,
linkcolor=blue,
citecolor=red,
urlcolor=magenta,
]{hyperref}

\allowdisplaybreaks

\newtheorem{theorem}{Theorem}[section]
\newtheorem{lemma}[theorem]{Lemma}
\newtheorem{corollary}[theorem]{Corollary}
\newtheorem{proposition}[theorem]{Proposition}

\theoremstyle{definition}
\newtheorem{remark}[theorem]{Remark}
\newtheorem{definition}[theorem]{Definition}

\newcounter{assum}

\numberwithin{equation}{section}

\begin{document}

\arraycolsep=1pt

\title{\bf\Large Sharp Quantitative Matrix-Weighted 
Estimates for Fractional Integrals and Sobolev Inequalities
\footnotetext{ \hspace{-0.35cm} 2020 \emph{Mathematics Subject Classification}.
Primary 42B35; Secondary 47G40, 47A30, 46E40, 46E35.
\endgraf \emph{Key words and phrases}. matrix weight, 
fractional integral operator, Sobolev inequality.
\endgraf
This project is partially supported by the National Natural
Science Foundation of China (Grant
Nos. 12371093 and 12431006), the Beijing Natural
Science Foundation (Grant No. 1262011), and
the Fundamental Research Funds for the Central Universities
(Grant No. 2253200028).}}
\author{Dachun Yang, Wen Yuan and Mingdong Zhang}
\date{\today}
\maketitle

\vspace{-0.6cm}

\begin{center}
\begin{minipage}{13cm}
{\small {\bf Abstract}\quad
Let $\alpha\in(0,n)$, $p\in(1,\frac{n}{\alpha})$, 
$q:=\frac{np}{n-\alpha p}$, and $W\in\mathscr A_{p,q}$.
We establish the following quantitative matrix-weighted estimate
for the fractional integral $I_\alpha$: for any $\vec{f}\in L^p(W^p)$,
\begin{align*}
\left\|I_\alpha\vec f\right\|_{L^q(W^q)}
\lesssim[W]_{\mathscr A_{p,q}}^{
(1-\frac{\alpha}{n})\max\{1,\frac{p'}{q}\}}
\left\|\vec f\right\|_{L^p(W^p)},
\end{align*}
where the implicit positive constant is 
independent of $W$ and $\vec{f}$.
Let $n\geq2$ be an integer, $p\in[1,n)$, $q:=\frac{np}{n-p}$,
and $W\in\mathscr A_{p,q}$. We also prove the following matrix-weighted
Sobolev inequality: for any smooth $\mathbb{C}^d$-valued function
$\vec{f}$ with compact support,
\begin{align*}
\left\|\vec{f}\right\|_{L^{q}(W^q)}
\lesssim[W]_{\mathscr A_{p,q}}^{\frac{n-1}{n}}
\left\|WD\vec{f}\right\|_{L^p(\mathbb R^n,\mathbb C^{d\times n})},
\end{align*}
where the implicit positive constant is 
independent of $W$ and $\vec{f}$ and $D\vec{f}$
is the Jacobian matrix of $\vec{f}$.
In both estimates, the exponent of $[W]_{\mathscr A_{p,q}}$
coincides with the corresponding optimal scalar exponent,
and hence is optimal.}
\end{minipage}
\end{center}




\section{Introduction}

Weighted norm inequalities for classical operators play a key role
in harmonic analysis. We first present a brief history of 
this topic. Recall that a non-negative locally integrable
function on $\mathbb{R}^n$ is called a \emph{weight} if it
takes values in $(0, \infty)$ almost everywhere
(see, for example, \cite[p.\,499]{g14c}).
For any weight $w$ and for any $p\in[1,\infty)$, the \emph{weighted 
Lebesgue space} $L^p(w)$ is defined to 
be the set of all measurable functions 
$f$ on $\mathbb{R}^n$ such that
\begin{align}\label{eq-Lp}
\|f\|_{L^p(w)}:=\left[\int_{\mathbb R^n}
|f(x)|^p w(x)\,dx\right]^{\frac1p}<\infty.
\end{align}
For any $p\in(1,\infty)$ and for any weight $w$,
Muckenhoupt \cite{m72} proved that the Hardy--Littlewood maximal operator $M$ is
bounded on $L^p(w)$ if and only if $w$ satisfies the $A_p$ condition:
\begin{align}\label{eq-Ap}
[w]_{A_p}:=\sup_{Q\subset\mathbb{R}^n}\fint_Q w(x)\,dx
\left(\fint_Q [w(x)]^{-\frac{1}{p-1}}\,dx\right)^{p-1}
<\infty,
\end{align}
where the supremum is taken over all cubes $Q\subset\mathbb R^n$
(all the cubes $Q\subset\mathbb{R}^n$ in this article are 
always assumed to have edges parallel to the coordinate axes).
Subsequently Hunt et al. \cite{hmw73} proved that,
for any $p\in(1,\infty)$, the $A_p$ condition \eqref{eq-Ap}
also characterizes the boundedness of the Hilbert
transform on $L^p(w)$. Recall that, for any
$\alpha\in(0,n)$, the \emph{fractional integral operator} $I_\alpha$
is defined by setting, for a suitable measurable function $f$
on $\mathbb{R}^n$ and for any $x\in\mathbb{R}^n$,
\begin{align*}
I_\alpha f(x):=\int_{\mathbb R^n}
\frac{f(y)}{|x-y|^{n-\alpha}}\,dy.
\end{align*}
Let
\begin{align}\label{eq:range}
\alpha\in(0,n),\ p\in\left(1,\frac{n}{\alpha}\right),
\text{ and }q:=\frac{np}{n-\alpha p}.
\end{align}
In this off-diagonal setting, for any weight $w$,
Muckenhoupt and Wheeden \cite{mw74} showed that
$I_\alpha$ is bounded from $L^p(w^p)$ to $L^q(w^q)$ 
if and only if $w$ satisfies the $A_{p,q}$ condition:
\begin{align}\label{eq-Apq}
[w]_{A_{p,q}}:=\sup_{Q\subset\mathbb{R}^n}
\fint_Q [w(x)]^q\,dx
\left(\fint_Q [w(x)]^{-p'}\,dx\right)^{\frac{q}{p'}}<\infty,
\end{align}
where the supremum is taken over all cubes $Q\subset\mathbb R^n$
and, for any $r\in(1,\infty)$, $r':=\frac{r}{r-1}$ is the
\emph{conjugate index} of $r$.

Once these weighted boundedness characterizations were established, a
natural problem is to determine the optimal
dependence of the above boundedness on the corresponding weight constants.
Initially, for any $p\in(1,\infty)$ and $w\in A_p$, 
Buckley \cite{b93} proved that 
\begin{align*}
\|M\|_{L^p(w)\to L^p(w)}
\lesssim [w]_{A_p}^{\frac{1}{p-1}},
\end{align*}
where the implicit positive constant is independent of 
$w$ and the exponent $\frac{1}{p-1}$ is optimal.
Such quantitative estimates were also extensively studied for
Calder\'on--Zygmund operators. For any 
$p\in(1,\infty)$ and $w\in A_p$, Petermichl obtained the 
quantitative boundedness of the Hilbert transform on $L^p(w)$
in \cite{p07} and of the Riesz transforms on $L^p(w)$ in \cite{p08}, 
with the optimal dependence on the weight constant 
$[w]_{A_p}^{\max\{1,\frac{1}{p-1}\}}$. 
For general Calder\'on--Zygmund operators, this dependence on 
the weight constants was completely settled by Hyt\"onen \cite{h12}.
In the off-diagonal setting, suppose that $\alpha$, $p$, and $q$
satisfy \eqref{eq:range} and that $w\in A_{p,q}$. 
Lacey et al. \cite[Theorem 2.6]{lmpt10} 
proved that, for any $f\in L^p(w^p)$,
\begin{align}\label{eq:scalar}
\|I_\alpha f\|_{L^q(w^q)}\lesssim
[w]_{A_{p,q}}^{(1-\frac{\alpha}{n})\max\{1,\frac{p'}{q}\}}
\|f\|_{L^p(w^p)},
\end{align}
where the implicit positive constant is independent of 
$w$ and the exponent $(1-\frac{\alpha}{n})\max\{1,\frac{p'}{q}\}$ is optimal.
For more details and further developments about 
sharp weighted estimates for more operators, 
we refer to \cite{cu25,cmp12,pereyra18}.

The study of matrix weights can be traced back to the work of
Wiener and Masani \cite[Section 4]{wm58}. Let $d\in\mathbb{N}$. 
A matrix-valued function $W$ on $\mathbb{R}^n$, taking values in
the set of $d\times d$ complex positive semidefinite 
matrices, is called a \emph{matrix weight}
if $W$ is positive definite almost everywhere and all the
entries of $W$ are locally integrable functions on $\mathbb{R}^n$.
Let $W$ be a matrix weight on $\mathbb{R}^n$. For any $p\in[1,\infty)$,
the \emph{matrix-weighted Lebesgue space} $L^p(W)$ is
defined to be the set of all measurable vector-valued
functions $\vec{f}: \mathbb{R}^n\to \mathbb{C}^d$ such that
\begin{align}\label{eq-LpW}
\left\|\vec{f}\right\|_{L^{p}(W)}:=\left[\int_{\mathbb{R}^n}
\left|W^{\frac{1}{p}}(x)\vec{f}(x)\right|^{p}\,dx
\right]^{\frac{1}{p}}<\infty.
\end{align}
In the 1990s, to study multivariate random stationary
processes and the invertibility of Toeplitz operators,
Treil and Volberg \cite{tv97} determined the
matrix $\mathscr{A}_2$ condition and proved that
the Hilbert transform is bounded on $L^2(W)$ if and only if
$W\in\mathscr{A}_2$. For $p\in(1, \infty)$,
Nazarov and Treil \cite{nt96}
and Volberg \cite{v97} independently formulated the matrix
$\mathscr{A}_p$ condition and showed that the Hilbert transform is bounded on 
$L^p(W)$ if and only if $W\in\mathscr{A}_p$.
Moreover, for any $p\in(1,\infty)$ and $W\in \mathscr{A}_p$,
Christ and Goldberg \cite{cg01,g03} further established
the boundedness of the Christ--Goldberg maximal operator
(the analogue of the Hardy--Littlewood maximal operator in 
the matrix-weighted setting) and 
Calder\'on--Zygmund operators on $L^{p}(W)$. 

As in the scalar setting, a fundamental problem is to
determine the optimal dependence of weighted norm inequalities
on the corresponding matrix weight constants. 
However, the optimal dependence of the
matrix-weighted boundedness of the Hilbert transform
on the weight constants differs from its scalar counterpart. 
Using the technique of convex body domination,
Nazarov et al. \cite{nptv17} proved that the 
Calder\'on--Zygmund operators are bounded on $L^2(W)$
with dependence on the weight constant
$[W]^{\frac{3}{2}}_{\mathscr{A}_2}$.
For general $p\in(1,\infty)$ and $W\in \mathscr{A}_p$, 
Cruz-Uribe et al. \cite[Corollary 1.16]{cim18}
established the boundedness of Calder\'on--Zygmund operators 
on $L^p(W)$ with the dependence on the weight constant
$[W]^{1+\frac{1}{p(p-1)}}_{\mathscr{A}_p}$.
Surprisingly, when $p=2$, the exponent $\frac{3}{2}$
of $[W]_{\mathscr{A}_2}$ was proved to
be optimal in \cite{dptv24} for the Hilbert transform. 
More recently, the optimality of the exponent $1+\frac{1}{p(p-1)}$ 
of $[W]_{\mathscr{A}_p}$ for general
$p\in(1,\infty)$ was also established in \cite{jqwx26} for the
Hilbert transform. Thus, for every $p\in(1,\infty)$, the optimal exponent of
$[W]_{\mathscr A_p}$ in the $L^p(W)$ bound for the Hilbert transform
is strictly larger than the corresponding exponent of $[w]_{A_p}$
in the scalar setting. In view of this, it is natural to ask whether the
optimal exponent of $[W]_{\mathscr A_{p,q}}$ in the matrix-weighted
boundedness for fractional integrals also differs from the scalar one
in \eqref{eq:scalar}. However, our first result shows that, for fractional
integrals, these two optimal exponents coincide.

Before presenting our first result, we briefly review
some existing matrix-weighted estimates for fractional integrals.
For any $k,m\in\mathbb N$ and for any $k\times m$ complex matrix $A$,
we use $\|A\|$ to denote its operator norm from $\mathbb C^m$ to $\mathbb C^k$.
Let $1<p\leq q<\infty$. The \emph{matrix Muckenhoupt class
$\mathscr A_{p,q}$} is defined to be the set of 
all matrix weights $W$ such that
\begin{align}\label{eq:Apq}
[W]_{\mathscr A_{p,q}}:=\sup_{Q\subset\mathbb{R}^n}\fint_Q\left(
\fint_Q\left\|W(x)W^{-1}(y)\right\|^{p'}\,dy
\right)^{\frac{q}{p'}}\,dx<\infty,
\end{align}
where the supremum is taken over all cubes $Q\subset\mathbb R^n$.
Let $\alpha$, $p$, and $q$ satisfy \eqref{eq:range}, and let $W\in \mathscr A_{p,q}$.
Isralowitz and Moen proved in \cite[Theorem 1.4]{im19} that,
for any $\vec{f}\in L^p(W^p)$,
\begin{align}\label{eq-integral-im}
\left\|I_\alpha \vec{f}\right\|_{L^q(W^q)}
\lesssim[W]_{\mathscr A_{p,q}}^{(1-\frac{\alpha}{n})
\frac{p'}{q}+\frac{1}{q'}}\left\|\vec{f}\right\|_{L^p(W^p)},
\end{align}
where the implicit positive constant is independent of $\vec{f}$ and $W$.
Note that $1-\frac{\alpha}{n}=\frac{1}{p'}+\frac{1}{q}\in(0,1)$,
and hence $(1-\frac{\alpha}{n})\max\{1,\frac{p'}{q}\}=
\max\{\frac{1}{p'}+\frac{1}{q},\frac{1}{q}+\frac{p'}{q^2}\}
<1+\frac{p'}{q^2}=(1-\frac{\alpha}{n})
\frac{p'}{q}+\frac{1}{q'}$, which further implies that
the exponent of $[W]_{\mathscr A_{p,q}}$
in \eqref{eq-integral-im} is strictly larger than the exponent 
of $[w]_{A_{p,q}}$ in \eqref{eq:scalar}
for all $\alpha$, $p$, and $q$ satisfying \eqref{eq:range}.
Furthermore, two matrix-weighted estimates for fractional
integrals and their commutators have also been studied 
in \cite{ci22,cim18}.
More recently, by establishing the fractional
convex body domination of $I_\alpha$,
Cruz-Uribe et al. \cite[Theorem 1.2]{clm26} further improved 
the exponent of $[W]_{\mathscr A_{p,q}}$ in \eqref{eq-integral-im} to
\begin{align}\label{eq-exponent}
\min\left\{
\left(1-\frac{\alpha}{n}\right)\frac{p'}{q}+\frac{1}{q'},\,
\left(1-\frac{\alpha}{n}\right)+\frac{p'-1}{q}
\right\}.
\end{align}
A similar computation shows that
$(1-\frac{\alpha}{n})+\frac{p'-1}{q}$
is also strictly larger than the sharp scalar exponent,
and hence the exponent in \eqref{eq-exponent} is still strictly
larger than the sharp scalar exponent in \eqref{eq:scalar} for all
$\alpha$, $p$, and $q$ satisfying \eqref{eq:range}.
Therefore, it is natural to ask whether the exponent in \eqref{eq-exponent}
can be further improved or whether it is already optimal.  
In the following theorem, we prove a quantitative matrix-weighted 
estimate for fractional integrals, in which the exponent 
of the weight constant coincides with the corresponding 
scalar exponent in \eqref{eq:scalar}, and hence is optimal.
For a vector-valued function $\vec f$, 
$I_\alpha \vec f$ is defined by applying $I_\alpha$ 
to each component of $\vec f$.

\begin{theorem}\label{thm:main}
Let $\alpha$, $p$, and $q$ satisfy \eqref{eq:range}, 
and let $W\in\mathscr{A}_{p,q}$.
Then, for any $\vec{f}\in L^p(W^p)$,
\begin{align}\label{eq:main}
\left\|I_\alpha \vec{f}\right\|_{L^q(W^q)}
\lesssim[W]_{\mathscr A_{p,q}}^{(1-\frac{\alpha}{n})\max\{1,\frac{p'}{q}\}}
\left\|\vec{f}\right\|_{L^p(W^p)},
\end{align}
where the implicit positive constant is independent of 
$W$ and $\vec{f}$. Moreover, the exponent
$(1-\frac{\alpha}{n})\max\{1,\frac{p'}{q}\}$ of the weight constant in
\eqref{eq:main} is optimal.
\end{theorem}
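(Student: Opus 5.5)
Optimality is immediate: taking $d=1$, $W=w$ scalar, one has $[W]_{\mathscr A_{p,q}}=[w]_{A_{p,q}}$, and the sharpness of the exponent in \eqref{eq:scalar} (Lacey et al.\ \cite{lmpt10}) rules out any smaller power. So the work is the upper bound.

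For the upper bound I will reduce to a sparse bilinear form and exploit the positivity of the kernel. By the standard dyadic reduction (three-lattice theorem), it suffices to treat the dyadic model
\begin{align*}
I^{\mathcal D}_\alpha\vec f:=\sum_{Q\in\mathcal D}\ell(Q)^{\alpha}\fint_Q\vec f\,\mathbf 1_Q ,
\end{align*}
and, after the usual sparse domination, a sparse family $\mathcal S$. By duality, it suffices to bound
\begin{align*}
\sum_{Q\in\mathcal S}|Q|^{\frac{\alpha}{n}}\int_Q\fint_Q\left|\left\langle W^{-1}(y)\vec f(y),W(x)\vec g(x)\right\rangle\right|\,dy\,dx
\end{align*}
by $C[W]^{\beta}\|\vec f\|_{L^p}\|\vec g\|_{L^{q'}}$, where $\beta:=(1-\frac{\alpha}{n})\max\{1,\frac{p'}{q}\}$ and the substitution $\vec f\mapsto W^{-1}\vec f$, $\vec g\mapsto W\vec g$ has been made. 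Positivity allows the absolute value to be moved inside, and it is at this point that convex-body losses are avoided. The key pointwise step is to insert the reducing matrices $\mathcal W_Q$ for the norm $\vec e\mapsto(\fint_Q|W(x)\vec e|^q\,dx)^{1/q}$, so that
\begin{align*}
|\langle W^{-1}(y)\vec f,W(x)\vec g\rangle|\le\|\mathcal W_Q W^{-1}(y)\|\,|\vec f(y)|\cdot\|W(x)\mathcal W_Q^{-1}\|\,|\vec g(x)| .
\end{align*}
This yields scalar functions $u_Q(y):=\|\mathcal W_QW^{-1}(y)\|^{p'}$ and $v_Q(x):=\|W(x)\mathcal W_Q^{-1}\|^{q}$. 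Here $\fint_Qv_Q\lesssim1$, and $(\fint_Q u_Q)^{q/p'}\lesssim[W]_{\mathscr A_{p,q}}$ by \eqref{eq:Apq}. These weights depend on $Q$, however, so the scalar theory does not apply verbatim.

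To handle the $Q$-dependence, I will run the scalar sharp argument of \cite{lmpt10}, which rests on the testing/Sawyer-type estimate or on Cruz-Uribe--Moen's sparse proof, where only two ingredients are needed: (i) the local $A_{p,q}$ product bound on each $Q$, and (ii) a Carleson/maximal-function estimate such as $\sum_{Q}\langle\cdot\rangle$ controlled by $\|M_{\sigma}\|$-type bounds. For (ii) I will replace $u_Q,v_Q$ on subcubes by the fixed matrix-weighted quantities, using the matrix reverse H\"older property and the bounds $\|\mathcal W_Q\mathcal W_P^{-1}\|$ for $P\subset Q$. Concretely, I will use the stopping argument of Nazarov--Petermichl--Treil--Volberg type to control $\|W(x)\mathcal W_Q^{-1}\|\le\|W(x)\mathcal W_P^{-1}\|\,\|\mathcal W_P\mathcal W_Q^{-1}\|$. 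I will pick stopping cubes where $\|\mathcal W_P\mathcal W_Q^{-1}\|$ exceeds a large constant; by the matrix $\mathscr A_{p,q}$ property these have small total measure, which recovers sparsity without losing powers of $[W]$. This reduces the problem to scalar-weighted sums with constant $[W]_{\mathscr A_{p,q}}$ and scalar maximal functions $M(u\,\cdot)$, $M(v\,\cdot)$, whose weighted bounds (Buckley-type, with the $A_\infty$-free form) produce exactly $\beta$.

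The main obstacle is this last step. The stopping decomposition must not cost extra powers of $[W]$, and the dyadic maximal operators tied to the $Q$-dependent scalarizations must be bounded with the sharp dependence. I would first try to bound the matrix Christ--Goldberg-type maximal operators $\sup_{Q\ni x}\fint_Q\|\mathcal W_QW^{-1}(y)\|\,|\vec f(y)|\,dy$ on $L^p$ with the norm $[W]^{1/q\cdot\dots}$ that matches the scalar one. If this fails, I would split into the cases $p'\ge q$ and $p'<q$, handling the latter by duality, since $W\in\mathscr A_{p,q}$ iff $W^{-1}\in\mathscr A_{q',p'}$ with comparable constants and $(q,p')$ swaps the two branches of the max.
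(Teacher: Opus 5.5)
\textbf{There is a genuine gap: the sharp bound for the sparse form is left as ``the main obstacle'', and the route you sketch for it reproduces the known non-sharp exponents.} Your optimality argument is correct. The passage to a sparse form is also harmless. Stop when $\fint_P\|\mathcal W_QW^{-1}\|\,|\vec f|>2\fint_Q\|\mathcal W_QW^{-1}\|\,|\vec f|$. For every cube $P$ whose stopping parent is $Q$, factor $\|W(x)W^{-1}(y)\|\le\|W(x)\mathcal W_Q^{-1}\|\,\|\mathcal W_QW^{-1}(y)\|$. The geometric decay of $|P|^{\alpha/n}$ then gives, with no loss in $[W]:=[W]_{\mathscr A_{p,q}}$,
\[
\Lambda_{\mathcal D_N}(\vec f,\vec g)\lesssim\sum_{Q\in\mathcal S}|Q|^{\frac{\alpha}{n}-1}\int_Q\|W(x)\mathcal W_Q^{-1}\|\,|\vec g(x)|\,dx\int_Q\|\mathcal W_QW^{-1}(y)\|\,|\vec f(y)|\,dy .
\]
Bounding this form sharply is the whole difficulty, and the mechanisms you propose do not do it.

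Your stopping rule does not freeze the weights. The maximal cubes with $\|\mathcal W_P\mathcal W_Q^{-1}\|>C$ do have small total measure. That follows from the weak-type bound for $M^{\mathcal D}(\|W\mathcal W_Q^{-1}\|^q\mathbf 1_Q)$, not from $\mathscr A_{p,q}$. But it only gives $u_P\lesssim u_Q$ and $v_Q\lesssim v_P$. Bounding $v_P$ by $v_Q$ needs $\|\mathcal W_Q\mathcal W_P^{-1}\|\lesssim1$. The available estimate, $\|\mathcal W_Q\mathcal W_P^{-1}\|\lesssim(\fint_P\|\mathcal W_QW^{-1}\|^{p'})^{1/p'}$, makes the violating cubes sparse only at a threshold of order $[W]^{1/q}$.

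The sharp scalar proofs also do not transfer. They rest on Carleson embeddings for the fixed measures $w^q\,dx$ and $w^{-p'}\,dx$, with weight-independent constants, and these have no analogue for $Q$-dependent scalarizations. The natural uniform substitute is reverse H\"older plus the unweighted maximal function, as in Lemma \ref{lem:off-diagonal-Carleson}. The sparse form gives no disjointness on either side, so you must use it on both factors:
\begin{itemize}
\item With the $\ell^{p'}\times\ell^p$ split, it costs $[W]^{1/p'}$ for $\vec g$ and $[W]^{\frac1q+\frac{p'}{pq}}=[W]^{\frac{p'}{q}}$ for $\vec f$. This is the exponent $(1-\frac{\alpha}{n})+\frac{p'-1}{q}$.
\item With the $\ell^{q'}\times\ell^q$ split, it gives $(1-\frac{\alpha}{n})\frac{p'}{q}+\frac1{q'}$.
\end{itemize}
These are exactly the two non-sharp entries of \eqref{eq-exponent}.

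Nothing in your plan for the sparse form uses $p<q$. Yet at $p=q$, $\alpha=0$, these positive sparse forms dominate the Hilbert transform via convex body domination \cite{nptv17}. There the matrix exponent is strictly larger than the scalar one \cite{dptv24,jqwx26}. So running the scalar argument cannot succeed without a genuinely off-diagonal ingredient.

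\textbf{The missing ingredient is a structure in which only one side needs the reverse-H\"older embedding, while the other is integrated over pairwise disjoint sets.} The paper obtains this without any sparse domination.
\begin{itemize}
\item The chain-packing Lemma \ref{lem:chain-packing} uses $p<q$ through $\Gamma=\frac1p+\frac1{q'}=1+\frac{\alpha}{n}>1$. It partitions $\mathcal D_N$ into chains whose roots satisfy $\sum_R\|\vec f\mathbf 1_R\|_{L^p}\|\vec g\mathbf 1_R\|_{L^{q'}}\lesssim\|\vec f\|_{L^p}\|\vec g\|_{L^{q'}}$. So it suffices to treat one chain.
\item On a chain, set $E_Q:=Q\setminus Q'$ and write $Q\times Q=(Q\times E_Q)\cup(E_Q\times Q')\cup(Q'\times Q')$. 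The last piece is absorbed using $|Q|^{\frac{\alpha}{n}-1}\le2^{\alpha-n}|Q'|^{\frac{\alpha}{n}-1}$. This requires keeping the kernel $\|W(x)W^{-1}(y)\|$ intact, so reducing operators must not be inserted before this step.
\item In the $Q\times E_Q$ piece, the $\vec f$-integral runs over the disjoint sets $E_Q$. H\"older and \eqref{eq:dual-reduce} therefore cost only $[W]^{1/q}$, with $\sum_Q\|\vec f\mathbf 1_{E_Q}\|_{L^p}^p\le\|\vec f\|_{L^p}^p$. Lemma \ref{lem:off-diagonal-Carleson} on the $\vec g$-side costs $[W]^{1/p'}$, giving $[W]^{1-\frac{\alpha}{n}}$.
\item The $E_Q\times Q'$ piece is the same estimate for $W^{-1}\in\mathscr A_{q',p'}$ and gives $[W]^{(1-\frac{\alpha}{n})\frac{p'}{q}}$.
\end{itemize}
This one-child structure of chains is exactly what a general sparse tree lacks.
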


A key point in the proof of Theorem \ref{thm:main} is a chain-packing
argument (see Lemma \ref{lem:chain-packing}) 
whose proof is inspired by the proof of \cite[Theorem 1.3]{l26}
and strongly depends on the off-diagonal assumption
$p<q$. Based on this argument, we reduce the proof of 
\eqref{eq:main} to the estimate of sums of double integrals over
chains. To deal with this estimate over chains, we borrow some ideas from
\cite[Lemma 5.7]{nptv17} and further establish 
the off-diagonal Carleson-type embedding
in Lemma \ref{lem:off-diagonal-Carleson}. Let $I_d$ denote the identity 
matrix of order $d$. In particular, if $W=wI_d$, then
$[W]_{\mathscr A_{p,q}}=[w]_{A_{p,q}}$.
The optimality of the exponent in \eqref{eq:main} 
then follows from the scalar one in \eqref{eq:scalar}.

On the other hand, to prove \eqref{eq:scalar},
Lacey et al. \cite[Theorem 2.4]{lmpt10} established the sharp weak-type
estimate for fractional integrals. Using this weak-type
estimate together with a truncation argument based on scalar level sets,
they also obtained a quantitative weighted Sobolev inequality.
To state this result, we need to recall the definition of 
$A_{1,q}$ weights in \cite[(3.2)]{lmpt10} for $q\in(1,\infty)$.
For any $q\in(1,\infty)$, the class $A_{1,q}$ is defined to
be the set of all weights $w$ such that
\begin{align}\label{eq:A1q-scalar}
[w]_{A_{1,q}}:=\sup_{Q\subset\mathbb R^n}
\fint_Q [w(x)]^q\,dx\,
\mathop{\mathrm{ess\,sup}}_{y\in Q}[w(y)]^{-q}<\infty,
\end{align}
where the supremum is taken over all cubes $Q\subset\mathbb R^n$.
Let $n\geq2$ be an integer, $p\in[1,n)$,
$q:=\frac{np}{n-p}$, and $w\in A_{p,q}$.
In what follows, we use the symbol
$f$ to denote a complex-valued function on $\mathbb R^n$
and the symbol $\vec f$ to denote a $\mathbb C^d$-valued function on $\mathbb R^n$. 
Thus, for any $p\in[1,\infty)$, we simply write $L^p$
when $w\equiv1$ or $W\equiv I_d$, respectively, in \eqref{eq-Lp}
and \eqref{eq-LpW}.
It was proved in \cite[Theorem 2.7]{lmpt10} that,
for any compactly supported Lipschitz function $f$,
\begin{align}\label{eq:scalar-sobolev-intro}
\|wf\|_{L^{q}}\lesssim
[w]_{A_{p,q}}^{\frac{n-1}{n}}
\|w\nabla f\|_{L^p},
\end{align}
where the implicit positive constant is independent of 
$w$ and $f$.
Furthermore, the exponent $\frac{n-1}{n}$ in
\eqref{eq:scalar-sobolev-intro} was later proved to be optimal by
Cruz-Uribe and Moen \cite[Theorem 1.7]{cm12}. 

In what follows, we always use $d\in\mathbb{N}$ to
denote the dimension of vectors and the order of square matrices.
Suppose that $\vec f=(f_1,\dots,f_d)$ is a continuously differentiable function
from $\mathbb R^n$ to $\mathbb C^d$. For any $x\in\mathbb{R}^n$, let
\begin{align*}
D\vec f(x):=
\begin{pmatrix}
\partial_1 f_1(x) & \cdots & \partial_n f_1(x)\\
\vdots & \ddots & \vdots\\
\partial_1 f_d(x) & \cdots & \partial_n f_d(x)
\end{pmatrix}
\end{align*}
be the \emph{Jacobian matrix} of $\vec f$ at $x$.
For any $d\times n$ complex matrix-valued function $F$ on
$\mathbb{R}^n$ and for any $p\in[1,\infty)$, let
\begin{align*}
\|F\|_{L^p(\mathbb R^n,\mathbb C^{d\times n})}
:=\left[\int_{\mathbb{R}^n}\|F(x)\|^p\,dx\right]^{\frac{1}{p}}.
\end{align*}
Let $n\geq2$ be an integer,
$p\in(1,n)$, $q:=\frac{np}{n-p}$, and $W\in\mathscr{A}_{p,q}$.
Based on the argument used in the proof of \eqref{eq-integral-im}, 
Isralowitz and Moen \cite[Theorem 4.2]{im19} proved, 
for any Schwartz function $\vec{f}$,
\begin{align*}
\left\|\vec{f}\right\|_{L^{q}(W^q)}
\lesssim[W]_{\mathscr A_{p,q}}^{
\frac{(n-1)p'}{nq}+\frac{1}{q'}}
\left\|WD\vec{f}\right\|_{L^p(\mathbb R^n,\mathbb C^{d\times n})},
\end{align*}
where the implicit positive constant is independent of 
$W$ and $\vec{f}$. Let $C_{\rm c}^\infty$ be the set of all
infinitely differentiable functions
$\vec{f}:\mathbb{R}^n\to\mathbb C^d$ with compact support.
More recently, using convex body domination and matrix-weighted
estimates for fractional singular integral operators,
Cruz-Uribe et al. \cite[Theorem 1.5]{clm26} proved that,
for any $\vec{f}\in C_{\rm c}^\infty$,
\begin{align}\label{eq-sobolev-clm}
\left\|\vec{f}\right\|_{L^{q}(W^q)}
\lesssim[W]_{\mathscr A_{p,q}}^{
\min\{\frac{(n-1)p'}{nq}+\frac{1}{q'},\,
\frac{n-1}{n}+\frac{p'-1}{q}\}}
\left\|WD\vec{f}\right\|_{L^p(\mathbb R^n,\mathbb C^{d\times n})}.
\end{align}
Note that the exponent of $[W]_{\mathscr A_{p,q}}$
in \eqref{eq-sobolev-clm} is also strictly
larger than $\frac{n-1}{n}$ in \eqref{eq:scalar-sobolev-intro}.
Our second result further improves the exponent 
of $[W]_{\mathscr A_{p,q}}$ in \eqref{eq-sobolev-clm}
to $\frac{n-1}{n}$, which is the same as the one 
in \eqref{eq:scalar-sobolev-intro} and hence optimal. 
To present this result, we need the following concept.
For any $q\in(1,\infty)$, the class $\mathscr A_{1,q}$ is defined to be
the set of all  matrix weights $W$ such that
\begin{align}\label{eq:A1q}
[W]_{\mathscr A_{1,q}}
:=\sup_{Q\subset\mathbb{R}^n}\fint_Q
\mathop{\mathrm{ess\,sup}}_{y\in Q}
\left\|W(x)W^{-1}(y)\right\|^q\,dx<\infty,
\end{align}
where the supremum is taken over all cubes $Q\subset\mathbb R^n$.
Our second main result is the following optimal
matrix-weighted Sobolev inequality.

\begin{theorem}\label{thm:matrix-weighted-Sobolev}
Let $n\geq2$, $p\in[1,n)$, and $q:=\frac{np}{n-p}$.
If $W\in\mathscr A_{p,q}$, then, for any
$\vec{f}\in C_{\rm c}^\infty$,
\begin{align}\label{eq:sobolev}
\left\|\vec{f}\right\|_{L^{q}(W^q)}
\lesssim[W]_{\mathscr A_{p,q}}^{\frac{n-1}{n}}
\left\|WD\vec{f}\right\|_{L^p(\mathbb R^n,\mathbb C^{d\times n})},
\end{align}
where the implicit positive constant is independent of 
$W$ and $\vec{f}$. Moreover, the exponent
$\frac{n-1}{n}$ of the weight constant in
\eqref{eq:sobolev} is optimal.
\end{theorem}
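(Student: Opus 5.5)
We plan to follow the scalar strategy of Lacey et al. \cite[Theorem 2.7]{lmpt10}: combine a sharp weak-type estimate for the fractional integral $I_1$ with a truncation argument. The truncation has to be organised so that it works with the matrix weight. The starting point is the pointwise representation $|\vec f(x)|\lesssim\int_{\mathbb R^n}\frac{|D\vec f(y)|}{|x-y|^{n-1}}\,dy$. Since the weight does not commute with $I_1$, we will instead apply the representation to the scalar-valued functions $x\mapsto W(x_0)\vec f(x)$, or more generally to $\langle \vec f,\vec e\rangle$. We then localise the weight through reducing operators on cubes. Concretely, we aim for a matrix-weighted weak-type bound
\begin{align*}
\sup_{\lambda>0}\lambda\,\bigl|\{x: |W(x)I_1\vec g(x)|>\lambda\}\bigr|^{\frac1q}
\lesssim[W]_{\mathscr A_{p,q}}^{\frac{n-1}{n}}\|W\vec g\|_{L^p},
\end{align*}
valid for $p\in[1,n)$. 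For $p=1$ the constant $[W]_{\mathscr A_{1,q}}$ should be read through the natural embedding of $\mathscr A_{p,q}$.

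For the weak-type bound we first pass to dyadic fractional operators $\sum_{Q\in\mathscr D}\ell(Q)^{1-n}\mathbf 1_Q\int_Q\cdot$ over finitely many shifted dyadic grids. We then use the sparse/convex-body domination of $I_1$. By duality with $L^{q',\infty}$-type testing on a level set $E$, the weak norm reduces to bounding
\begin{align*}
\sum_{Q\in\mathcal S}|Q|^{\frac1n}\fint_Q\fint_Q|\langle W^{-1}(y)\vec h(y),W(x)\vec u(x)\rangle|\,dy\,dx\,|Q|,
\end{align*}
with $\vec u$ of unit size supported on $E$. The key point is that, unlike in the strong-type estimate, only $|E|^{1/q'}$ appears on the dual side. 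As in the scalar proof, this gives exponent $(1-\frac1n)$ times $1$, with no $\max\{1,p'/q\}$ loss. Inserting the reducing matrices $\mathcal W_Q$, the quantity $\|W(x)\mathcal W_Q^{-1}\|\,\|\mathcal W_Q W^{-1}(y)\|$ is controlled, after averaging, by $[W]_{\mathscr A_{p,q}}$ raised to the appropriate power. The remaining sum is a scalar Carleson-type sum, which we handle as in the chain-packing/off-diagonal Carleson lemmas used for Theorem \ref{thm:main}, but now with the weak-type dual.

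Next comes the truncation. For $k\in\mathbb Z$ we set $\vec f_k:=\phi_k(|\vec f|)\frac{\vec f}{|\vec f|}$, with $\phi_k$ a piecewise linear cutoff taking values in $[0,2^k]$ and active on $\{2^k<|\vec f|\le2^{k+1}\}$. Using the level sets of $|\vec f|$ in the unweighted Euclidean norm rather than of $|W\vec f|$ keeps $D\vec f_k$ supported on $E_k:=\{2^k<|\vec f|\le 2^{k+1}\}$, with $\|W D\vec f_k\|\lesssim\|WD\vec f\|$ pointwise there. This last claim is delicate: the derivative of the unit direction contributes a term bounded by $|D\vec f|$, and after left-multiplication by $W$ it is still $\lesssim\|WD\vec f\|$, because it has the form $W D\vec f$ composed with bounded projections acting on the right. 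We then apply the weak-type bound to each $\vec f_k$ and sum over $k$, exploiting that the sets $E_k$ are disjoint and $q\ge p$:
\begin{align*}
\|\vec f\|_{L^q(W^q)}^q\lesssim\sum_k 2^{kq}\,|\{|W\vec f|\gtrsim 2^k\}|,
\end{align*}
and $\sum_k\|WD\vec f\mathbf 1_{E_k}\|_{L^p}^q\le\|WD\vec f\|_{L^p}^q$. The main obstacle is right here. The level sets of $|W\vec f|$ and of $|\vec f|$ are different, so the left-hand side is not directly controlled by $\sum_k 2^{kq}|E_{k}|$. To fix this, we will truncate along the scalar functions $|W(x)\vec f(x)|$ only after freezing $W$ on dyadic cubes via the reducing matrices, using the $\mathscr A_{p,q}$ condition (equivalently, scalar $A_\infty$-type control of $x\mapsto |W(x)\vec e|^q$ with constants at most $[W]$ to powers compatible with $\frac{n-1}{n}$) to pass between $|W(x)\vec f|$ and $|\mathcal W_Q\vec f|$.

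Finally, the optimality of the exponent $\frac{n-1}{n}$ follows by taking $W=wI_d$, for which $[W]_{\mathscr A_{p,q}}=[w]_{A_{p,q}}$, and invoking \cite[Theorem 1.7]{cm12}.
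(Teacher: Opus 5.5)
Your proposal has a genuine gap at the truncation step, and the obstacle you flag yourself is not resolved. First, the pointwise claim $\|WD\vec f_k\|\lesssim\|WD\vec f\|$ is false. Write $\vec f_k=h_k(\vec f)$ with $h_k(\vec z):=\phi_k(|\vec z|)\vec z/|\vec z|$. The chain rule gives $D\vec f_k=Dh_k(\vec f)\,D\vec f$. The Jacobian of $h_k$ is built from $\hat f\hat f^{*}$ and $I_d-\hat f\hat f^{*}$ (where $\hat f:=\vec f/|\vec f|$), and it acts on the \emph{left} of $D\vec f$, on the $\mathbb C^d$ side, i.e.\ between $W$ and $D\vec f$, not on the right. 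Hence $WD\vec f_k=[W\,Dh_k(\vec f)\,W^{-1}]\,WD\vec f$, and $\|W\,Dh_k\,W^{-1}\|$ is unbounded.

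This already fails for the constant weight $W\equiv\mathrm{diag}(1,\epsilon)$, for which $[W]_{\mathscr A_{p,q}}=1$. Take a point where $\vec f$ is a multiple of $(1,1)$ with $|\vec f|\in(2^k,2^{k+1})$ and $D\vec f=e_2\xi^{T}$. Then $\|WD\vec f\|=\epsilon|\xi|$, but $\|WD\vec f_k\|\geq\frac14|\xi|$. Truncating by level sets of $|W(x)\vec f(x)|$ instead is impossible, since $W$ is merely measurable.

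Your remedy is to freeze $W$ to reducing matrices and pass between $|W(x)\vec f(x)|$ and $|\mathcal W_Q\vec f(x)|$ by $A_\infty$-type control. This is exactly where a new idea is needed. The discrepancy is the unbounded factor $\|W(x)\mathcal W_Q^{-1}\|$, and local truncations on nested cubes must be glued consistently.

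The paper supplies this idea with a stopping time over convex sets:
\begin{itemize}
\item It sets $\mathcal K_{Q_0}:=\{\mathbf 0\}$ and $\mathcal K_P:=\mathcal K_Q+\gamma_QA_Q^{-1}\overline{B(\mathbf 0,1)}$ for each stopping child $P$, where $\gamma_Q$ is a quantile of $v_Q:=d(A_Q\vec f,A_Q\mathcal K_Q)$.
\item The scalar truncation $\psi_Q$ of $v_Q$ involves only the constant matrix $A_Q$. So $|\nabla\psi_Q|\lesssim\|A_QD\vec f\|\mathbf 1_{\Delta_Q}\leq\|A_QW^{-1}\|\,\|WD\vec f\|\mathbf 1_{\Delta_Q}$, and this loss factor is controlled by the $\mathscr A_{p,q}$ condition.
\item The Poincar\'e inequality then gives $\gamma_Q\lesssim[W]_{\mathscr A_{p,q}}^{1/q}|Q|^{-1/q}\|WD\vec f\mathbf 1_{\Delta_Q}\|_{L^p}$, with the sets $\Delta_Q$ pairwise disjoint.
\item The Minkowski-sum structure yields the sparse bound $|W\vec f|\leq\sum_{R}\gamma_R\|WA_R^{-1}\|\mathbf 1_R$.
\end{itemize}

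Second, the matrix weak-type bound with constant $[W]_{\mathscr A_{p,q}}^{\frac{n-1}{n}}$ is asserted rather than proved. Your sketch gives no mechanism that removes the $\max\{1,\frac{p'}{q}\}$ loss. Suppose you run the decomposition of Lemma \ref{lem-simple-chain} with $|\vec g|\leq\mathbf 1_E$. The term $\Lambda_2$, where $x$ ranges over the disjoint sets $E_Q$, does produce $|E|^{1/q'}$ after H\"older's inequality. What remains, however, is a Carleson embedding in $\vec h$ for the dual weight $W^{-1}\in\mathscr A_{q',p'}$. That costs $[W]_{\mathscr A_{p,q}}^{\frac1q+\frac{p'}{q^2}}=[W]_{\mathscr A_{p,q}}^{\frac{p'}{q}\cdot\frac{n-1}{n}}$, the same loss as in the strong-type estimate.

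The paper avoids weak-type estimates altogether. Its sparse bound contains only one kind of sum, $\sum_Q\lambda_Q|Q|^{-1/q}\int_Q\|WA_Q^{-1}\|g$, with $\sum_Q\lambda_Q^p\leq\|WD\vec f\|_{L^p}^p$. For $p>1$, Lemma \ref{lem:off-diagonal-Carleson} bounds this by $[W]_{\mathscr A_{p,q}}^{\frac1q+\frac1{p'}}=[W]_{\mathscr A_{p,q}}^{\frac{n-1}{n}}$. For $p=1$ the bound follows from \eqref{eq:end-reduce} and a trivial estimate.

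A minor point on optimality: the extremal functions in \cite{cm12} are not compactly supported. The paper therefore uses $\phi(|x|^\delta)$ with power weights instead.
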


The proof of Theorem \ref{thm:matrix-weighted-Sobolev}
is based on a stopping-time construction
adapted to convex sets and reducing operators. 
This construction yields a sparse domination 
for $|W\vec f|$, whose coefficients involve
the local $L^p$-norms of $WD\vec f$ over pairwise disjoint sets.
The desired matrix-weighted Sobolev inequality then follows 
from this sparse domination and the off-diagonal Carleson-type embedding
in Lemma \ref{lem:off-diagonal-Carleson}.

The remainder of this article is organized as follows. In Section
\ref{sec:fractional}, via reducing operators, we first present 
the quantitative reverse H\"{o}lder inequality 
for the class $\mathscr{A}_{p,q}$ in Lemma \ref{lem:op-RH}. 
Using the shifted dyadic grids and duality,
we reduce the proof of Theorem \ref{thm:main} to the estimate 
of sums of double integrals over shifted dyadic grids, 
which is the estimate stated in Theorem \ref{thm:dyadic-form}.
To prove Theorem \ref{thm:dyadic-form}, we establish a chain-packing argument in
Lemma \ref{lem:chain-packing},
which further reduces this estimate to the corresponding one
over chains. Using the off-diagonal Carleson-type 
embedding result in Lemma \ref{lem:off-diagonal-Carleson},
we prove the chain estimate in Lemma \ref{lem-simple-chain}.
Finally, we prove Theorem \ref{thm:dyadic-form} and hence
Theorem \ref{thm:main}. In Section \ref{sec:sobolev}, given a 
continuously differentiable vector-valued function 
and a nonempty closed convex set, we first construct 
in Lemma \ref{lem:Sobolev-smooth-truncation} 
a continuously differentiable scalar 
function whose values depend on the distance between 
the values of the given vector-valued function and the convex set.  
Next, via a stopping-time construction
associated with convex sets and reducing operators,
we prove Theorem \ref{thm:matrix-weighted-Sobolev}.

We end this introduction with some conventions on the notation.
Throughout this article, we work in $\mathbb{R}^n$ and, unless
otherwise specified, take $\mathbb{R}^n$ as the underlying space.
For any $z\in\mathbb C$, let $\overline z$ denote the 
\emph{complex conjugate} of $z$.
For any $\vec x:=(x_1,\dots,x_d),
\vec y:=(y_1,\dots,y_d)\in\mathbb C^d$, let
\begin{align*}
\langle\vec x,\vec y\rangle:=\sum_{j=1}^d x_j\overline{y_j}
\text{ and }|\vec x|:=\langle\vec x,\vec x\rangle^{\frac12}.
\end{align*}
Let $\mathbb N:=\{1,2,\dots\}$ and
$\mathbb Z_+:=\mathbb N\cup\{0\}$. 
For any cube $Q\subset\mathbb{R}^n$, let $\ell(Q)$ denote
the edge length of $Q$. For any measurable set $E\subset\mathbb{R}^n$, 
let $|E|$ denote its Lebesgue measure
and let $\mathbf1_E$ be its characteristic function. If
$|E|\in(0,\infty)$, for any measurable function $f$ on $\mathbb{R}^n$, 
let
\begin{align*}
\fint_E f(x)\,dx:=\frac{1}{|E|}\int_E f(x)\,dx
\end{align*}
whenever the integral is well defined.
Let $\mathbf{0}$ denote the \emph{origin} of $\mathbb{C}^d$.
The symbol $C$ denotes a positive constant which is independent
of the main parameters involved, but may vary from line to line.
The notation $A\lesssim B$ means that $A\leq CB$ for some positive constant $C$,
while $A\sim B$ means $A\lesssim B\lesssim A$.
Finally, in all subsequent proofs we retain the notation introduced
in the relevant statement.

\section{Proof of Theorem \ref{thm:main}}\label{sec:fractional}

This section is devoted to proving Theorem \ref{thm:main} and
contains three subsections. In Subsection \ref{sec:reducing}, 
we recall the concept of reducing operators and
the quantitative reverse H\"older inequality for classes $\mathscr{A}_{p,q}$.
In Subsection \ref{sec:chain-packing}, using shifted dyadic grids and
duality, we reduce the proof of Theorem \ref{thm:main} to the estimate of 
sums of double integrals over shifted dyadic grids.
Next, we prove the chain-packing lemma,
which further reduces this estimate to the corresponding one
over chains. In Subsection \ref{sec:chain-estimates}, we establish 
this chain estimate and finally prove Theorem \ref{thm:main}.

\subsection{Reducing Operators and Quantitative 
Reverse H\"{o}lder Estimates}\label{sec:reducing}

In this subsection, we first recall the concept of reducing operators
and the relationship between the weight constant $[W]_{\mathscr{A}_{p,q}}$
and the reducing operators of $W$. Finally, via reducing operators,
we present the quantitative reverse H\"older inequalities for classes $\mathscr{A}_{p,q}$.

We begin by recalling the concept of reducing operators,
which was originally introduced by Volberg \cite[(3.1)]{v97}.
Let $q\in(1,\infty)$ and let $W$ be a matrix weight such that
$\|W\|^q$ is locally integrable.
From \cite[Proposition 1.2]{g03},
it follows that, for any bounded measurable set $E\subset\mathbb{R}^n$
with positive measure, there exists a positive definite matrix
$A_E$ such that, for any $\vec z\in\mathbb{C}^d$,
\begin{align*}
\left[\fint_E\left|W(x)\vec{z}
\right|^q\,dx\right]^{\frac{1}{q}}\leq
\left|A_E\vec z\right|\leq \sqrt{d}\left[\fint_E
\left|W(x)\vec z\right|^q\,dx\right]^{\frac{1}{q}}.
\end{align*}
We call $A_E$ a \emph{reducing operator}
of order $q$ for $W$.
By the norm equivalence theorem in finite-dimensional spaces,
we find that, if $A_E$ is a reducing operator
of order $q$ for $W$, then, for any $d\times d$
complex matrix $M$,
\begin{align}\label{eq:reduce}
\|A_E M\|\sim\left[\fint_E\left\|W(x)M
\right\|^q\,dx\right]^{\frac{1}{q}},
\end{align}
where the positive equivalence constants depend only on $d$ and $q$.
Note that the discussion in \cite[Section 2]{im19} and
\cite[Corollary 3.4]{im19} yields the following  
duality property of $\mathscr A_{p,q}$ and a
description of its weight constant in terms of reducing operators.
\begin{proposition}\label{prop-weight-reducing}
Let $1<p\leq q<\infty$ and let $W\in\mathscr A_{p,q}$.
Then $W^{-1}\in\mathscr{A}_{q',p'}$.
Moreover, for each cube $Q\subset\mathbb R^n$, let $A_Q$ and
$\widetilde{A}_Q$ be reducing operators, respectively,
of order $q$ for $W$ and of order $p^{\prime}$ for $W^{-1}$.
Then
\begin{align*}
[W]_{\mathscr{A}_{p,q}}^{\frac{1}{q}}
&\sim\sup_Q\left\|A_Q\widetilde{A}_Q\right\|,
\end{align*}
where the positive equivalence constants depend only on $p$, $q$, and $d$.
\end{proposition}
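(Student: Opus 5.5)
We prove the statement in two parts: first the duality claim $W^{-1}\in\mathscr A_{q',p'}$, then the two-sided estimate for $[W]_{\mathscr A_{p,q}}$ in terms of $\sup_Q\|A_Q\widetilde A_Q\|$. Both rely on the reducing-operator equivalence \eqref{eq:reduce}, applied once in the variable $x$ with exponent $q$ and once in the variable $y$ with exponent $p'$, together with the identity $\|M\|=\|M^*\|$ for Hermitian factors.

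\textbf{Step 1: the inner average.} Fix a cube $Q$ and $x\in Q$. Since $W^{-1}(y)$ and $W(x)$ are Hermitian,
\[
\left\|W(x)W^{-1}(y)\right\|=\left\|W^{-1}(y)W(x)\right\|.
\]
Applying \eqref{eq:reduce} to the weight $W^{-1}$ (order $p'$) with $M=W(x)$ gives
\[
\left(\fint_Q\left\|W(x)W^{-1}(y)\right\|^{p'}dy\right)^{\frac1{p'}}\sim\left\|\widetilde A_QW(x)\right\|=\left\|W(x)\widetilde A_Q\right\|,
\]
where the last equality again uses self-adjointness.

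\textbf{Step 2: the outer average.} Raise the Step 1 equivalence to the power $q$ and average over $x\in Q$. Applying \eqref{eq:reduce} for $W$ (order $q$) with $M=\widetilde A_Q$ yields
\[
\fint_Q\left(\fint_Q\left\|W(x)W^{-1}(y)\right\|^{p'}dy\right)^{\frac q{p'}}dx\sim\left\|A_Q\widetilde A_Q\right\|^q.
\]
Taking the supremum over $Q$ gives $[W]_{\mathscr A_{p,q}}\sim\sup_Q\|A_Q\widetilde A_Q\|^q$, with constants depending only on $p$, $q$, $d$. Note that this requires $\|W^{-1}\|^{p'}$ to be locally integrable, so that $\widetilde A_Q$ exists.

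\textbf{Local integrability.} We extract this from $W\in\mathscr A_{p,q}$. Finiteness of the double integral gives, for almost every $x\in Q$, that $\int_Q\|W(x)W^{-1}(y)\|^{p'}dy<\infty$. Fix such an $x$. Then
\[
\left\|W^{-1}(y)\right\|\le\left\|W^{-1}(x)\right\|\left\|W(x)W^{-1}(y)\right\|,
\]
so $\|W^{-1}\|^{p'}$ is integrable on $Q$. Since $Q$ is arbitrary, it is locally integrable.

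\textbf{Step 3: duality.} The condition for $W^{-1}\in\mathscr A_{q',p'}$, with the roles of the exponents swapped (outer exponent $p'$, inner exponent $(q')'=q$), is
\[
\sup_Q\fint_Q\left(\fint_Q\left\|W^{-1}(x)W(y)\right\|^{q}dy\right)^{\frac{p'}{q}}dx<\infty.
\]
Repeating Steps 1--2 with the roles reversed, this quantity is comparable to $\sup_Q\|\widetilde A_QA_Q\|^{p'}$. Since $A_Q$ and $\widetilde A_Q$ are Hermitian, $\|\widetilde A_QA_Q\|=\|(A_Q\widetilde A_Q)^*\|=\|A_Q\widetilde A_Q\|$. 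Therefore
\[
[W^{-1}]_{\mathscr A_{q',p'}}\sim[W]_{\mathscr A_{p,q}}^{\frac{p'}{q}}<\infty .
\]
Here $p\le q$ ensures $q'\le p'$, so the class $\mathscr A_{q',p'}$ is well defined.

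The only delicate point is the measure-theoretic justification in the local-integrability step, which must be in place before the reducing operators $\widetilde A_Q$ can be invoked. Once that is settled, the rest is bookkeeping with \eqref{eq:reduce}.
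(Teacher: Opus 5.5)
Your proof is correct. The paper gives no argument of its own and instead cites Isralowitz--Moen; your two-fold use of \eqref{eq:reduce} together with $\|M\|=\|M^*\|$ for Hermitian factors is the standard argument behind that citation, and the paper uses the same mechanism in Remark~\ref{rmk-dual}(ii). Your Step 3 also yields Remark~\ref{rmk-dual}(i) directly.

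One point to add for completeness. The existence of $A_Q$ likewise requires $\|W\|^q\in L^1_{\rm loc}$, and this also follows from $W\in\mathscr A_{p,q}$, just as your local-integrability step shows for $W^{-1}$. Use $\|W(x)\|^{p'}\fint_Q\|W(y)\|^{-p'}\,dy\le\fint_Q\|W(x)W^{-1}(y)\|^{p'}\,dy$, where $\fint_Q\|W(y)\|^{-p'}\,dy>0$ because $\|W\|<\infty$ almost everywhere. Raising this to the power $\frac{q}{p'}$ and integrating over $Q$ gives the claim.
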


It is well known that any complex positive semidefinite 
matrix is always Hermitian (see, for instance, \cite[Theorem 4.1.4]{hj13}).
The following basic identity for matrix norms of 
products of Hermitian matrices follows from 
\cite[Theorem 5.6.2(d)]{hj13}; we omit the details.
\begin{lemma}\label{lem-norm-AB=BA}
Suppose that $U,V$ are $d\times d$
complex Hermitian matrices.
Then $\|UV\|=\|VU\|$.
\end{lemma}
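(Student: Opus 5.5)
Since $U$ and $V$ are Hermitian, the plan is to pass from $UV$ to $VU$ through the adjoint, using the fact that the operator norm is invariant under taking adjoints. The first step is to compute $(UV)^*=V^*U^*=VU$, where $U^*=U$ and $V^*=V$ by the Hermitian assumption. It then remains to check that $\|A^*\|=\|A\|$ for any $d\times d$ complex matrix $A$.

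For that identity, the most transparent route is through the duality description of the operator norm. For any $\vec x,\vec y\in\mathbb C^d$ we have $\langle A\vec x,\vec y\rangle=\langle\vec x,A^*\vec y\rangle$. Consequently
\begin{align*}
\|A\|=\sup_{|\vec x|=|\vec y|=1}\left|\langle A\vec x,\vec y\rangle\right|
=\sup_{|\vec x|=|\vec y|=1}\left|\langle \vec x,A^*\vec y\rangle\right|=\|A^*\|,
\end{align*}
where the first equality uses that $|\vec z|=\sup_{|\vec y|=1}|\langle\vec z,\vec y\rangle|$ by the Cauchy--Schwarz inequality, with equality attained at $\vec y=\vec z/|\vec z|$. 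Equivalently, one may invoke the characterization of $\|A\|$ as the largest singular value and note that $A$ and $A^*$ share the same singular values; this is the content of \cite[Theorem 5.6.2(d)]{hj13}.

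Putting the two steps together gives $\|UV\|=\|(UV)^*\|=\|VU\|$. There is no real obstacle: the only points needing care are that the norm in question is the Euclidean operator norm fixed at the start of the paper, and that the Hermitian hypothesis is exactly what makes $(UV)^*=VU$. For a general unitarily invariant norm the same argument still works, because such norms are also invariant under taking adjoints.
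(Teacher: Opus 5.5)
Your proof is correct and takes the same route as the paper, which only cites a textbook fact and omits the details. Your two steps are exactly those omitted details: $(UV)^*=V^*U^*=VU$ by the Hermitian hypothesis, and the adjoint invariance $\|A^*\|=\|A\|$, which follows from $\|A\|=\sup_{|\vec x|=|\vec y|=1}|\langle A\vec x,\vec y\rangle|$.
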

\begin{remark}\label{rmk-dual}
The following three statements hold.
\begin{itemize}
\item[{\rm (i)}] Under the assumptions of Proposition \ref{prop-weight-reducing},
it follows from Lemma \ref{lem-norm-AB=BA} that
\begin{align*}
\left[W^{-1}\right]_{\mathscr{A}_{q',p'}}^{\frac{1}{p'}}
\sim[W]_{\mathscr{A}_{p,q}}^{\frac{1}{q}},
\end{align*}
where the positive equivalence constants depend only on $p$, $q$, and $d$.
\item[{\rm (ii)}] Let $1<p\leq q<\infty$ and let $W\in\mathscr A_{p,q}$.
For any cube $Q$ in $\mathbb R^n$, let $A_Q$ and $\widetilde A_Q$
be reducing operators, respectively, of order $q$ for $W$
and of order $p'$ for $W^{-1}$.
By \eqref{eq:reduce} and Lemma \ref{lem-norm-AB=BA}, we find that
\begin{align}\label{eq:dual-reduce}
\left(\fint_Q\left\|A_QW^{-1}(y)\right\|^{p'}\,dy\right)^{\frac1{p'}}
\sim\left\|\widetilde A_QA_Q\right\|
=\left\|A_Q\widetilde A_Q\right\|
\lesssim[W]_{\mathscr{A}_{p,q}}^{\frac{1}{q}},
\end{align}
where the implicit positive constants depend only on $p$, $q$, and $d$.

\item[{\rm (iii)}]
Let $q\in(1,\infty)$ and $W\in \mathscr{A}_{1,q}$.
For any cube $Q$ in $\mathbb R^n$, let $A_Q$ 
be a reducing operator of order $q$ for $W$.
From \eqref{eq:reduce} and the definition of 
$[W]_{\mathscr A_{1,q}}$, we deduce that, for almost
every $y\in Q$,
\begin{align*}
\left\|A_QW^{-1}(y)\right\|\sim\left(\fint_Q
\left\|W(x)W^{-1}(y)\right\|^q\,dx\right)^{\frac1q}
\leq[W]_{\mathscr A_{1,q}}^{\frac{1}{q}},
\end{align*}
and hence
\begin{align}\label{eq:end-reduce}
\mathop{\operatorname{ess\,sup}}_{y\in Q}
\left\|A_QW^{-1}(y)\right\|
\lesssim[W]_{\mathscr A_{1,q}}^{\frac{1}{q}}.
\end{align}
\end{itemize}
\end{remark}

We next give the quantitative reverse H\"older property of
the weight class $\mathscr{A}_{p,q}$. For this purpose, let
$\alpha$, $p$, and $q$ satisfy \eqref{eq:range}
and let $W\in \mathscr{A}_{p,q}$.
From the proof of \cite[Lemma 2.8]{hyyz26},
we deduce that, for any $\vec{z}\in\mathbb{C}^d\setminus\{\mathbf{0}\}$,
$w_{\vec{z}}:=|W\vec{z}|\in A_{p,q}$ and
$[w_{\vec{z}}]_{A_{p,q}}\leq[W]_{\mathscr{A}_{p,q}}$.
By H\"older's inequality and the assumption $p<q$,
we find that $w_{\vec{z}}^q\in A_q$
and $[w_{\vec{z}}^q]_{A_q}\leq[w_{\vec{z}}]_{A_{p,q}}$. 
It follows from \cite[Proposition 7.1.5(5)]{g14c} that,
for any $w\in A_q$, $[w]_{A_q}\geq1$, and hence, 
for any $\vec{z}\in\mathbb{C}^d\setminus\{\mathbf{0}\}$,
\begin{align}\label{eq:Apq-lower}
1\leq[w_{\vec{z}}]_{A_{p,q}}\leq[W]_{\mathscr{A}_{p,q}}.
\end{align}
Using this observation and applying the quantitative reverse
H\"{o}lder inequality in \cite[Theorem 2.3]{hp13} to
$w_{\vec{z}}^q$, we obtain the following 
quantitative reverse H\"{o}lder inequality for $W$.
\begin{lemma}\label{lem-reverseHolder}
Let $\alpha$, $p$, and $q$ satisfy \eqref{eq:range},
and let $W\in \mathscr{A}_{p,q}$.
Then there exists a positive constant 
$\tau_n\in[1,\infty)$ depending only on
$n$ such that, for any $r\in(1,1+\frac{1}{\tau_n[W]_{\mathscr{A}_{p,q}}}]$,
for any $\vec{z}\in\mathbb{C}^d\setminus\{\mathbf{0}\}$, 
and for any cube $Q$ in $\mathbb{R}^n$,
\begin{align*}
\left[\fint_Q|W(x)\vec{z}|^{qr}\,dx\right]^{\frac{1}{r}}
\leq 2\fint_Q|W(x)\vec{z}|^{q}\,dx.
\end{align*}
\end{lemma}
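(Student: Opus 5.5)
The plan is to fix $\vec z\in\mathbb C^d\setminus\{\mathbf 0\}$ and reduce everything to the scalar weight $v:=w_{\vec z}^q=|W\vec z|^q$, to which the quantitative scalar reverse H\"older inequality \cite[Theorem 2.3]{hp13} applies. That result says the following. If $v\in A_\infty$ (in particular if $v\in A_q$, with the Fujii--Wilson constant $[v]_{A_\infty}\lesssim[v]_{A_q}$), then for $r\in(1,1+\frac{1}{\tau_n[v]_{A_\infty}}]$ and every cube $Q$,
\begin{align*}
\left(\fint_Q v^r\right)^{\frac1r}\le 2\fint_Q v .
\end{align*}
The constant $\tau_n$ here depends only on $n$. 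If one prefers the version stated with $[v]_{A_q}$ in place of $[v]_{A_\infty}$, the conclusion is the same, since $[v]_{A_\infty}\le C_n[v]_{A_q}$; the factor $C_n$ is absorbed into $\tau_n$.

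The first step is to verify that $v\in A_q$ with $[v]_{A_q}\le[W]_{\mathscr A_{p,q}}$. By the observation preceding the lemma (from \cite{hyyz26}), $w_{\vec z}\in A_{p,q}$ with $[w_{\vec z}]_{A_{p,q}}\le[W]_{\mathscr A_{p,q}}$. Next, $[v]_{A_q}=\fint_Q w^q(\fint_Q w^{-q'})^{q-1}$. Since $p<q$ gives $q'<p'$, H\"older's inequality yields $(\fint_Q w^{-q'})^{1/q'}\le(\fint_Q w^{-p'})^{1/p'}$. Raising this to the power $q'(q-1)=q$ gives $[v]_{A_q}\le[w_{\vec z}]_{A_{p,q}}$. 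Together with \eqref{eq:Apq-lower}, this gives $1\le[v]_{A_\infty}\lesssim_n[v]_{A_q}\le[W]_{\mathscr A_{p,q}}$.

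The second step is the monotonicity of the admissible range. Since $[v]_{A_\infty}\le C_n[W]_{\mathscr A_{p,q}}$, any $r\le 1+\frac{1}{\tau_n[W]_{\mathscr A_{p,q}}}$ with $\tau_n:=C_n\tau_n^{\rm HP}$ (enlarged to be at least $1$) also satisfies $r\le1+\frac{1}{\tau_n^{\rm HP}[v]_{A_\infty}}$. So the scalar reverse H\"older inequality holds for $v$ on every cube with constant $2$. Writing $v^r=|W\vec z|^{qr}$ then gives exactly the stated inequality, with $\tau_n$ independent of $\vec z$, $W$, $p$, $q$, and $\alpha$.

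The only delicate point is bookkeeping the exact form of \cite[Theorem 2.3]{hp13}: whether its constant is $2$ and whether its range is expressed via $[v]_{A_\infty}$, and checking that the comparison $[v]_{A_\infty}\lesssim_n[v]_{A_q}$ introduces only a dimensional constant. Both are standard, so no real obstacle is expected.
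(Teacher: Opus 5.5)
Your proposal is correct and takes essentially the same route as the paper. You reduce to the scalar weight $v=|W\vec z|^q$, use H\"older's inequality with $q'<p'$ to get $[v]_{A_q}\le[w_{\vec z}]_{A_{p,q}}\le[W]_{\mathscr A_{p,q}}$, and then apply the sharp reverse H\"older inequality of Hyt\"onen--P\'erez. Your explicit treatment of the comparison $[v]_{A_\infty}\lesssim_n[v]_{A_q}$ (absorbed into $\tau_n$) and of the monotonicity in $r$ only spells out details the paper leaves implicit.
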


Let $\alpha$, $p$, and $q$ satisfy \eqref{eq:range}
and let $W\in\mathscr A_{p,q}$. Let
\begin{align}\label{eq:epsilon}
\varepsilon_W
:=
\frac{1}{\tau_n[W]_{\mathscr A_{p,q}}},
\end{align}
where $\tau_n$ is the constant in Lemma \ref{lem-reverseHolder}.
The following lemma can be regarded as a matrix version of 
Lemma \ref{lem-reverseHolder}.
\begin{lemma}\label{lem:op-RH}
Let $\alpha$, $p$, and $q$ satisfy \eqref{eq:range},
let $W\in\mathscr A_{p,q}$, and, for any cube
$Q\subset\mathbb R^n$, let $A_Q$ be a reducing operator
of order $q$ for $W$. Then, for any cube
$Q\subset\mathbb R^n$,
\begin{align}\label{eq:op-RH}
\fint_Q\left\|W(x)A_Q^{-1}\right\|^{q(1+\varepsilon_W)}\,dx
\lesssim\left[\fint_Q
\left\|W(x)A_Q^{-1}\right\|^{q}\,dx\right]^{1+\varepsilon_W},
\end{align}
where the implicit positive constant depends only on
$n$, $d$, and $q$.
\end{lemma}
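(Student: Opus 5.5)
The plan is to reduce the operator-norm estimate to the vector estimate of Lemma \ref{lem-reverseHolder} by testing against a fixed basis. Fix a cube $Q$ and let $\{\vec e_1,\dots,\vec e_d\}$ be the standard orthonormal basis of $\mathbb C^d$. For any $d\times d$ matrix $M$, the operator norm is equivalent, with constants depending only on $d$, to $\sum_{j=1}^d|M\vec e_j|$. Hence, for almost every $x\in Q$,
\begin{align*}
\left\|W(x)A_Q^{-1}\right\|\sim\sum_{j=1}^d\left|W(x)\vec z_j\right|,
\qquad \vec z_j:=A_Q^{-1}\vec e_j\neq\mathbf 0 .
\end{align*}

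First I bound the left-hand side of \eqref{eq:op-RH}. Write $r:=1+\varepsilon_W$; by \eqref{eq:epsilon}, $r\in(1,1+\frac{1}{\tau_n[W]_{\mathscr A_{p,q}}}]$, so Lemma \ref{lem-reverseHolder} applies with this $r$ to each $\vec z_j$. Using $(\sum_{j=1}^d a_j)^{qr}\le d^{qr-1}\sum_{j=1}^d a_j^{qr}$, I obtain
\begin{align*}
\fint_Q\left\|W(x)A_Q^{-1}\right\|^{qr}\,dx
\lesssim\sum_{j=1}^d\fint_Q\left|W(x)\vec z_j\right|^{qr}\,dx
\le 2^r\sum_{j=1}^d\left[\fint_Q\left|W(x)\vec z_j\right|^{q}\,dx\right]^{r}.
\end{align*}
The factor $2^r\le 4$ is harmless. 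The constant $d^{qr-1}$ is bounded by $d^{2q}$ because $r\le2$ (recall $\tau_n\ge1$ and $[W]_{\mathscr A_{p,q}}\ge1$ by \eqref{eq:Apq-lower}). Thus all implicit constants here depend only on $d$ and $q$.

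Next I return to the operator norm. For each $j$, since $|W(x)\vec z_j|\le\|W(x)A_Q^{-1}\|$, we have $\fint_Q|W(x)\vec z_j|^q\,dx\le\fint_Q\|W(x)A_Q^{-1}\|^q\,dx$. Summing over the $d$ indices yields \eqref{eq:op-RH} with a constant depending only on $d$ and $q$, which is in particular compatible with the claimed dependence on $n$, $d$, and $q$.

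There is essentially no obstacle. The only point needing care is that the comparison constants between $\|M\|$ and $\sum_{j=1}^d|M\vec e_j|$, once raised to the power $qr$, must stay uniform in $W$; this holds because $r\in(1,2]$. The reducing operator $A_Q$ is used only through its invertibility, since it is positive definite. Alternatively, one could note that the right-hand side is $\sim1$ by \eqref{eq:reduce} with $M=A_Q^{-1}$, but that observation is not needed.
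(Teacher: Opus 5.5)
Your proof is correct and follows essentially the same route as the paper: you test the operator norm against the vectors $A_Q^{-1}e_j$, apply Lemma~\ref{lem-reverseHolder} with $r=1+\varepsilon_W$, and then return to the operator norm. The only cosmetic difference is the last step, where you bound $|W(x)A_Q^{-1}e_j|\le\|W(x)A_Q^{-1}\|$ term by term; the paper instead uses $\sum_j a_j^{1+\varepsilon_W}\le(\sum_j a_j)^{1+\varepsilon_W}$ followed by norm equivalence.
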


\begin{proof}
Let $Q$ be a cube in $\mathbb R^n$ and let
$\{e_j\}_{j=1}^d$ be an orthonormal basis of $\mathbb C^d$.
By \eqref{eq:Apq-lower}, we find that $[W]_{\mathscr A_{p,q}}\geq1$
and hence $\varepsilon_W\in(0,\frac{1}{\tau_n}]$,
where $\tau_n$ is the constant in Lemma \ref{lem-reverseHolder}.
Applying the norm equivalence theorem in finite-dimensional spaces,
we conclude that, for any $d\times d$ complex matrix $M$,
\begin{align*}
\|M\|^{q(1+\varepsilon_W)}
\lesssim\sum_{j=1}^d
|Me_j|^{q(1+\varepsilon_W)},
\end{align*}
where the implicit positive constant depends only on $n$, $d$, and $q$.
Hence,
\begin{align}\label{eq:vector-RH}
\fint_Q\left\|W(x)A_Q^{-1}\right\|^{q(1+\varepsilon_W)}\,dx
\lesssim\sum_{j=1}^d\fint_Q
\left|W(x)A_Q^{-1}e_j\right|^{q(1+\varepsilon_W)}\,dx.
\end{align}
For any $j\in\{1,\dots,d\}$, using Lemma \ref{lem-reverseHolder} with
$\vec z:=A_Q^{-1}e_j$ and $r:=1+\varepsilon_W$, we obtain
\begin{align*}
\fint_Q\left|W(x)A_Q^{-1}e_j\right|^{q(1+\varepsilon_W)}\,dx
\leq2^{1+\varepsilon_W}\left[\fint_Q
\left|W(x)A_Q^{-1}e_j\right|^q\,dx\right]^{1+\varepsilon_W}.
\end{align*}
This, together with \eqref{eq:vector-RH}, further implies that
\begin{align*}
\fint_Q\left\|W(x)A_Q^{-1}\right\|^{q(1+\varepsilon_W)}\,dx
&\lesssim\sum_{j=1}^d\left[\fint_Q
\left|W(x)A_Q^{-1}e_j\right|^q\,dx\right]^{1+\varepsilon_W}\\
&\leq\left[\sum_{j=1}^d\fint_Q\left|W(x)A_Q^{-1}e_j\right|^q\,dx
\right]^{1+\varepsilon_W}\\
&\lesssim\left(\fint_Q\left\|W(x)A_Q^{-1}\right\|^q\,dx\right)^{1+\varepsilon_W},
\end{align*}
where the last inequality follows from
the norm equivalence theorem in 
finite-dimensional spaces again.
This completes the proof of \eqref{eq:op-RH} 
and hence Lemma \ref{lem:op-RH}.
\end{proof}

\subsection{Chain-Packing Argument}\label{sec:chain-packing}

In this subsection, using the shifted dyadic grids and duality,
we first reduce the proof of \eqref{eq:main} to the estimate 
of sums of double integrals over shifted dyadic grids 
in Theorem \ref{thm:dyadic-form}.
Inspired by the proof of \cite[Theorem 1.3]{l26},
we establish a chain-packing argument in Lemma \ref{lem:chain-packing},
which further reduces the above estimate to 
the corresponding estimate over chains.

Recall that, for any $\tau\in\{0,\frac13,\frac23\}^n$, the
\emph{shifted dyadic grid $\mathcal{D}^\tau$} is defined by setting
\begin{align}\label{eq:shifted-grid}
\mathcal{D}^\tau
:=\left\{2^j\left[m+[0,1)^n+(-1)^j\tau\right]:\,
j\in\mathbb Z,\ m\in\mathbb Z^n\right\}.
\end{align}
Let $\mathfrak D:=\bigcup_{\tau\in\{0,\frac13,\frac23\}^n}\mathcal{D}^\tau$.
The following properties of shifted dyadic grids are also needed 
(see, for example, \cite[p.\,479]{mtt02}).

\begin{lemma}\label{lem:shifted-dyadic}
The following two assertions hold.
\begin{itemize}
\item[\rm (i)] For any $P,Q\in\mathcal{D}^\tau$, with
$\tau\in\{0,\frac13,\frac23\}^n$, one has
$P\cap Q\in\{\varnothing,P,Q\}$.
\item[\rm (ii)] For any cube $Q$ in $\mathbb R^n$, there exists
$P\in\mathfrak D$ such that $Q\subset P$ and
$\ell(P)\in(\frac32\ell(Q),3\ell(Q)]$.
\end{itemize}
\end{lemma}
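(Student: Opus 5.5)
For (i) I will first show that, within a fixed grid $\mathcal D^\tau$, the cubes of generation $j+1$ are unions of cubes of generation $j$; nestedness then follows by induction. For (ii) I will reduce to a one-dimensional pigeonhole argument on the three shifts, applied coordinatewise with a common scale $2^j$. Write $\mathcal D^\tau_j$ for the cubes in \eqref{eq:shifted-grid} with a fixed $j$.

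\emph{Proof of (i).} For each $j$, the family $\mathcal D^\tau_j$ consists of the translates of $2^j[0,1)^n$ by the lattice coset $2^j(\mathbb Z^n+(-1)^j\tau)$. Hence it is a partition of $\mathbb R^n$, and two cubes of the same generation are either equal or disjoint. Next I check that the coset of generation $j+1$ lies inside the coset of generation $j$. For $m\in\mathbb Z^n$,
\begin{align*}
2^{j+1}\left[m+(-1)^{j+1}\tau\right]
=2^j\left[2m-3(-1)^{j}\tau+(-1)^j\tau\right],
\end{align*}
and $2m-3(-1)^j\tau\in\mathbb Z^n$ because $3\tau\in\{0,1,2\}^n$. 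Consequently every $P\in\mathcal D^\tau_{j+1}$ is a translate of $2^{j+1}[0,1)^n=\bigcup_{\epsilon\in\{0,1\}^n}2^j(\epsilon+[0,1)^n)$ by a vector of $2^j(\mathbb Z^n+(-1)^j\tau)$, so it is exactly a union of $2^n$ cubes of $\mathcal D^\tau_j$. By induction, for $k>j$ every cube of $\mathcal D^\tau_k$ is a union of cubes of $\mathcal D^\tau_j$. Now take $P\in\mathcal D^\tau_k$ and $Q\in\mathcal D^\tau_j$ with $k\ge j$. Since $P$ is a union of cubes of the partition $\mathcal D^\tau_j$, either $Q$ is one of them, giving $Q\subset P$, or $Q$ is disjoint from all of them, giving $P\cap Q=\varnothing$. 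This proves (i).

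\emph{Proof of (ii).} Let $Q=\prod_{i=1}^n J_i$, where each $J_i$ is an interval of length $\ell:=\ell(Q)$ (closed, open, or half-open; it suffices to treat $J_i\subset[a_i,a_i+\ell]$). Choose the unique $j\in\mathbb Z$ with $2^j\in(\frac32\ell,3\ell]$, and set $\delta:=2^j/3$. In the $i$-th coordinate, the endpoints of the intervals of $\mathcal D^\tau_j$ form the set $2^j(\mathbb Z+(-1)^j\tau_i)$. As $\tau_i$ ranges over $\{0,\frac13,\frac23\}$, these three sets are pairwise disjoint and their union is $\delta\mathbb Z$; indeed they are exactly the three residue classes of $\delta\mathbb Z$ modulo $3\delta$.

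The interval $(a_i,a_i+\ell]$ has length $\ell<2\delta$, because $2^j>\frac32\ell$. Hence it contains at most two points of $\delta\mathbb Z$, and these lie in at most two of the three residue classes. So I can choose $\tau_i$ such that no endpoint of the corresponding one-dimensional grid lies in $(a_i,a_i+\ell]$. Letting $e$ be the largest such endpoint with $e\le a_i$, we get $[a_i,a_i+\ell]\subset[e,e+2^j)$.

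Doing this for each coordinate independently gives $\tau=(\tau_1,\dots,\tau_n)$, and the product of the intervals $[e_i,e_i+2^j)$ is a cube $P\in\mathcal D^\tau_j\subset\mathfrak D$ with $Q\subset P$ and $\ell(P)=2^j\in(\frac32\ell(Q),3\ell(Q)]$. This proves (ii).

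The only delicate point is the counting in (ii): the strict inequality $\ell<2\delta$ is what guarantees at most two bad endpoints. I use the half-open-on-the-left interval $(a_i,a_i+\ell]$ so that closed cubes are also handled.
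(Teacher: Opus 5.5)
Your proof is correct and complete. In (i), the nesting identity with $2m-3(-1)^j\tau\in\mathbb Z^n$ is the right observation. In (ii), the pigeonhole over the three residue classes of $\frac{2^j}{3}\mathbb Z$ modulo $2^j$ works, with $\ell(Q)<\frac{2^{j+1}}{3}$ guaranteeing at most two bad endpoints. You also handle closed and non-half-open cubes correctly.

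The paper gives no proof of this lemma and only refers to Muscalu--Tao--Thiele. Your argument is the standard one-third-trick proof behind that reference, so it supplies the details the paper omits.
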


Using the shifted dyadic grids, we obtain 
the following pointwise estimate for the fractional kernel.

\begin{lemma}\label{lem:kernel-dyadic}
Let $\alpha\in(0,n)$. For any $x,y\in\mathbb R^n$ with $x\neq y$,
\begin{align*}
\frac{1}{|x-y|^{n-\alpha}}
\lesssim
\sum_{\tau\in\{0,\frac13,\frac23\}^n}
\sum_{Q\in\mathcal{D}^\tau}
|Q|^{\frac{\alpha}{n}-1}\mathbf{1}_Q(x)\mathbf{1}_Q(y),
\end{align*}
where the implicit positive constant depends only on $n$ and $\alpha$.
\end{lemma}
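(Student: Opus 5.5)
Fix $x\neq y$ and set $r:=|x-y|>0$. The plan is to find a single cube $Q$ in one of the shifted grids that contains both $x$ and $y$ and has side length comparable to $r$. The kernel $|x-y|^{\alpha-n}$ is then controlled by the one term $|Q|^{\frac{\alpha}{n}-1}\mathbf 1_Q(x)\mathbf 1_Q(y)$. All the remaining terms of the double sum are nonnegative, so discarding them only makes the right-hand side smaller, and this gives the inequality.

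\emph{Step 1.} Let $Q_0$ be the cube centered at $x$ with edge length $\ell(Q_0):=2r$. Since $|y_i-x_i|\le|x-y|=r$ for every coordinate $i$, we have $y\in \overline{Q_0}$. To avoid boundary issues with half-open cubes, replace $Q_0$ by the concentric cube of edge length $3r$; call it $Q_0$ again. Then $x,y\in Q_0$ and $\ell(Q_0)=3r$.

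\emph{Step 2.} Apply Lemma \ref{lem:shifted-dyadic}(ii) to $Q_0$. This gives $\tau\in\{0,\frac13,\frac23\}^n$ and $P\in\mathcal D^\tau$ with $Q_0\subset P$ and
\begin{align*}
\frac92 r<\ell(P)\le 9r.
\end{align*}
Because $x,y\in Q_0\subset P$, we get $\mathbf 1_P(x)\mathbf 1_P(y)=1$.

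\emph{Step 3.} Compare the kernel with the term for $P$. Since $\alpha<n$, the exponent $\frac{\alpha}{n}-1$ is negative. Using $|P|=\ell(P)^n\le (9r)^n$, we obtain
\begin{align*}
|P|^{\frac{\alpha}{n}-1}=\ell(P)^{\alpha-n}\ge (9r)^{\alpha-n}=9^{\alpha-n}|x-y|^{\alpha-n}.
\end{align*}
Every term in the sum over $\tau$ and $Q\in\mathcal D^\tau$ is nonnegative, so the whole sum is at least the term with $Q=P$. Therefore
\begin{align*}
\frac{1}{|x-y|^{n-\alpha}}\le 9^{n-\alpha}\sum_{\tau}\sum_{Q\in\mathcal D^\tau}|Q|^{\frac{\alpha}{n}-1}\mathbf 1_Q(x)\mathbf 1_Q(y).
\end{align*}
The implicit constant is $9^{n-\alpha}$, which depends only on $n$ and $\alpha$, as required.

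There is no real obstacle in this argument. The only points needing care are that $y$ lies in the interior of $Q_0$, which is why the edge length is enlarged to $3r$, and that the negative exponent reverses the inequality in the correct direction: an upper bound on $\ell(P)$ becomes a lower bound on $|P|^{\frac{\alpha}{n}-1}$.
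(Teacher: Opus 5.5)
Your proof is correct and follows the paper's argument. You enclose $x,y$ in a cube of side comparable to $|x-y|$, use Lemma \ref{lem:shifted-dyadic}(ii) to get a shifted dyadic cube $P$ of comparable size containing it, and keep only that single nonnegative term, using $\alpha<n$. The only difference is cosmetic: the paper starts from a cube of side $2|x-y|$ and gets $\ell(P)\le 6|x-y|$, while your enlargement to side $3r$ (a careful touch for half-open cubes) gives $\ell(P)\le 9r$ and the constant $9^{n-\alpha}$.
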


\begin{proof}
Fix  $x,y\in\mathbb R^n$ with $x\neq y$.
Choose a cube $R\subset \mathbb{R}^n$ containing
$x$ and $y$ such that $\ell(R)=2|x-y|$. By Lemma
\ref{lem:shifted-dyadic}(ii), there exist
$\tau\in\{0,\frac13,\frac23\}^n$ and $Q\in\mathcal{D}^\tau$ such that
$R\subset Q$ and $\ell(Q)\leq3\ell(R)=6|x-y|$.
From this and the assumption that $\alpha\in(0,n)$,
we deduce that
\begin{align*}
|x-y|^{\alpha-n}
\lesssim[\ell(Q)]^{\alpha-n}
=|Q|^{\frac{\alpha}{n}-1}\mathbf{1}_Q(x)\mathbf{1}_Q(y).
\end{align*}
This completes the proof of Lemma \ref{lem:kernel-dyadic}.
\end{proof}

In what follows, let $L_{\rm c}^\infty$ denote the set of all bounded
measurable vector-valued functions
$\vec f:\mathbb R^n\to\mathbb C^d$ with compact support.
Let $\alpha$, $p$, and $q$ satisfy \eqref{eq:range}, and let
$W\in\mathscr A_{p,q}$. By the definition of
$I_\alpha$, the triangle inequality, 
and Lemma \ref{lem:kernel-dyadic}, we find that,
for any $\vec f,\vec g\in L_{\rm c}^\infty$,
\begin{align}\label{eq:dyadic}
&\left|\int_{\mathbb R^n}
\left\langle W(x)I_\alpha\vec f(x),\vec g(x)\right\rangle\,dx\right|
\nonumber\\
&\quad\leq
\int_{\mathbb R^n}\int_{\mathbb R^n}
\frac{\|W(x)W^{-1}(y)\|\,|W(y)\vec f(y)|\,|\vec g(x)|}
{|x-y|^{n-\alpha}}\,dy\,dx\nonumber\\
&\quad\lesssim
\sum_{\tau\in\{0,\frac13,\frac23\}^n}
\sum_{Q\in\mathcal{D}^\tau}|Q|^{\frac{\alpha}{n}-1}
\int_Q\int_Q\left\|W(x)W^{-1}(y)\right\|
\left|W(y)\vec f(y)\right|\,\left|\vec g(x)\right|\,dy\,dx\nonumber\\
&\quad=:\sum_{\tau\in\{0,\frac13,\frac23\}^n}
\Lambda_{\mathcal{D}^\tau}\left(W\vec f,\vec g\right).
\end{align}
In what follows, $\mathcal{D}$ always denotes one fixed shifted
dyadic grid $\mathcal{D}^\tau$ in \eqref{eq:shifted-grid}
for some $\tau\in\{0,\frac13,\frac23\}^n$.
For any subfamily $\mathcal{F}\subset\mathcal{D}$ and for any measurable vector-valued functions
$\vec f,\vec g:\mathbb R^n\to\mathbb C^d$, let
\begin{align*}
\Lambda_{\mathcal{F}}\left(\vec f,\vec g\right)
:=\sum_{Q\in\mathcal{F}}|Q|^{\frac{\alpha}{n}-1}
\int_Q\int_Q\left\|W(x)W^{-1}(y)\right\|
\left|\vec f(y)\right|\,\left|\vec g(x)\right|\,dy\,dx.
\end{align*}
Using \eqref{eq:dyadic}, we conclude that,
to prove the desired matrix-weighted estimate for $I_\alpha$
in Theorem \ref{thm:main}, it suffices to establish the 
following estimate for $\Lambda_{\mathcal{D}}(\vec f,\vec g)$.

\begin{theorem}\label{thm:dyadic-form}
Let $\alpha$, $p$, and $q$ satisfy \eqref{eq:range}, and let
$W\in\mathscr A_{p,q}$. Assume that $\mathcal{D}$ is a shifted dyadic grid.
Then, for any $\vec f\in L^p$ and $\vec g\in L^{q'}$,
\begin{align*}
\Lambda_{\mathcal{D}}\left(\vec f,\vec g\right)
\lesssim[W]_{\mathscr A_{p,q}}^{(1-\frac{\alpha}{n})\max\{1,\frac{p'}{q}\}}
\left\|\vec f\right\|_{L^p}\left\|\vec g\right\|_{L^{q'}},
\end{align*}
where the implicit positive constant depends only on $n$, $d$,
$\alpha$, and $p$.
\end{theorem}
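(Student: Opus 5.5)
We reduce, as usual, to sparse-type estimates. Since the form involves all cubes of $\mathcal D$, the first step is to pass to a sparse (or at least Carleson) subfamily by a stopping-time argument adapted to the matrix weight. Insert reducing operators: for each $Q$ write $\|W(x)W^{-1}(y)\|\le \|W(x)A_Q^{-1}\|\,\|A_QW^{-1}(y)\|$, where $A_Q$ is of order $q$ for $W$. Then
\begin{align*}
\Lambda_{\mathcal D}(\vec f,\vec g)\le\sum_{Q\in\mathcal D}|Q|^{\frac{\alpha}{n}+1}
\left(\fint_Q\|A_QW^{-1}(y)\|\,|\vec f(y)|\,dy\right)
\left(\fint_Q\|W(x)A_Q^{-1}\|\,|\vec g(x)|\,dx\right).
\end{align*}
Plan: use Lemma \ref{lem:chain-packing} (the chain-packing lemma announced in the introduction) to decompose $\mathcal D$ into stopping families on which the reducing operators are comparable along chains, so that the contribution of a chain is controlled by a single cube's data. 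This step uses $p<q$: the geometric gain $|Q|^{\alpha/n}$ makes sums over the levels of a chain summable.

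On each chain, I would estimate the two averages separately. For the $\vec g$-side, use H\"older with exponent $q(1+\varepsilon_W)$ on $\|W(x)A_Q^{-1}\|$. Lemma \ref{lem:op-RH} gives
\[
\left(\fint_Q\|WA_Q^{-1}\|^{q(1+\varepsilon_W)}\right)^{\frac{1}{q(1+\varepsilon_W)}}\lesssim1,
\]
so this factor is bounded by an $L^{s}$-average of $|\vec g|$ with $s$ slightly above $q'$. The price is the standard $[W]$-power from the maximal function $M_s$ on $L^{q'}$, namely $\varepsilon_W^{-1/q}$-type growth, i.e.\ $[W]^{1/q}$-type. For the $\vec f$-side, use \eqref{eq:dual-reduce}: $\|A_QW^{-1}\|$ has $L^{p'}$-average $\lesssim[W]^{1/q}$. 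This gives a Carleson measure
\[
\mu_Q:=|Q|\left(\fint_Q\|A_QW^{-1}\|^{p'}\right)^{q/p'}
\]
relative to the $W$-localized structure. Then apply the off-diagonal Carleson embedding Lemma \ref{lem:off-diagonal-Carleson}, which I expect has the form
\[
\sum_Q a_Q\langle F\rangle_Q\langle G\rangle_Q|Q|^{1+\alpha/n}\lesssim(\text{Carleson const})^{\theta}\|F\|_p\|G\|_{q'}.
\]
Combine it with the mixed-exponent optimization: interpolate the two bounds, one in which the $W$-side is controlled by Lemma \ref{lem:op-RH} and one in which the $W^{-1}$-side is controlled by reverse H\"older for $W^{-1}\in\mathscr A_{q',p'}$ via Remark \ref{rmk-dual}(i). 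This yields the two exponents $(1-\frac{\alpha}{n})$ and $(1-\frac{\alpha}{n})\frac{p'}{q}$, whose maximum is the claim. The split into cases $p'\le q$ and $p'>q$ corresponds to which side the stopping time is run on, using the duality symmetry $W\leftrightarrow W^{-1}$, $(p,q)\leftrightarrow(q',p')$.

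The main obstacle is the chain estimate. One must avoid losing the extra $\frac1{q'}$ or $\frac{p'-1}{q}$ that appears in \cite{im19,clm26}, which arises from bounding $\|W(x)W^{-1}(y)\|$ too crudely on each cube. The approach I would try first is the trick of \cite[Lemma 5.7]{nptv17}: along a chain $Q_0\supset Q_1\supset\cdots$, telescope $A_{Q_0}^{-1}=A_{Q_0}^{-1}A_{Q_k}A_{Q_k}^{-1}$ and control $\|A_{Q_k}A_{Q_0}^{-1}\|$ by stopping conditions, so that only one factor $[W]^{(1-\alpha/n)/\cdots}$ is paid per chain rather than per cube. Summing over chains is then done with the packing lemma.
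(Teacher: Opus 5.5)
Your outline names the right tools, but it misidentifies the two steps that carry the proof, and the chain estimate as you plan it would not close.

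First, Lemma \ref{lem:chain-packing} is not a stopping time on the weight, and it gives no comparability of reducing operators along chains. Its construction involves only $\vec f$ and $\vec g$. Set $\nu(E):=\|\vec f\mathbf{1}_E\|_{L^p}^p/\|\vec f\|_{L^p}^p+\|\vec g\mathbf{1}_E\|_{L^{q'}}^{q'}/\|\vec g\|_{L^{q'}}^{q'}$. A chain continues into the (necessarily unique) child $P$ of $Q$ with $\nu(P)>\frac12\nu(Q)$, and new chains start at all other children. Since $\Gamma:=\frac1p+\frac1{q'}=1+\frac{\alpha}{n}>1$, the sum of $[\nu(R)]^\Gamma$ over each new generation of roots is at most $2^{1-\Gamma}$ times the sum over the previous one. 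This gives $\sum_R\|\vec f\mathbf{1}_R\|_{L^p}\|\vec g\mathbf{1}_R\|_{L^{q'}}\lesssim\|\vec f\|_{L^p}\|\vec g\|_{L^{q'}}$.

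This, and not a geometric gain from $|Q|^{\frac{\alpha}{n}}$, is where $p<q$ enters. Because $\frac{\alpha}{n}-1+\frac1{p'}+\frac1q=0$, the single-cube bound $|Q|^{\frac{\alpha}{n}-1}\int_Q\int_Q|\vec f(y)|\,|\vec g(x)|\,dy\,dx\le\|\vec f\mathbf{1}_Q\|_{L^p}\|\vec g\mathbf{1}_Q\|_{L^{q'}}$ is scale invariant. Along a chain of continuation children these products can decay arbitrarily slowly, so estimating cube by cube does not sum. What you need is a bound $\Lambda_{\mathcal C}(\vec f,\vec g)\lesssim[W]_{\mathscr A_{p,q}}^{(1-\frac{\alpha}{n})\max\{1,\frac{p'}{q}\}}\|\vec f\|_{L^p}\|\vec g\|_{L^{q'}}$ valid for an \emph{arbitrary} chain $\mathcal C$. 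On such a chain, quantities like $\|A_{Q_k}A_{Q_0}^{-1}\|$ are uncontrolled. If you stopped on the weight instead, you would have to pay for the packing of those stopping cubes with extra powers of $[W]_{\mathscr A_{p,q}}$, and your sketch gives no way to avoid this.

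Second, the idea actually taken from \cite[Lemma 5.7]{nptv17} is not a telescoping of reducing operators. It is the splitting $Q\times Q=(Q\times E_Q)\cup(E_Q\times Q')\cup(Q'\times Q')$, where $Q'$ is the next cube of the chain, $E_Q:=Q\setminus Q'$, and $E_{Q_N}:=Q_N$ for the last cube.
\begin{itemize}
\item The $Q'\times Q'$ part is absorbed into the term of $Q'$, since $|Q|^{\frac{\alpha}{n}-1}\le2^{\alpha-n}|Q'|^{\frac{\alpha}{n}-1}$.
\item On $Q\times E_Q$ you factor $\|W(x)W^{-1}(y)\|\le\|W(x)A_Q^{-1}\|\,\|A_QW^{-1}(y)\|$, using each cube's own $A_Q$. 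H\"older and \eqref{eq:dual-reduce} produce $[W]_{\mathscr A_{p,q}}^{\frac1q}|Q|^{\frac1{p'}}\|\vec f\mathbf{1}_{E_Q}\|_{L^p}$. This is $\ell^p$-summable over the chain exactly because the $E_Q$ are pairwise disjoint. The $\ell^{p'}$-sum of $|Q|^{-\frac1q}\int_Q\|W(x)A_Q^{-1}\|\,|\vec g(x)|\,dx$ is handled by Lemma \ref{lem:off-diagonal-Carleson} on the $(1-2^{-n})$-sparse chain, at cost $[W]_{\mathscr A_{p,q}}^{\frac1{p'}}$.
\item The $E_Q\times Q'$ part is the same estimate for $W^{-1}\in\mathscr A_{q',p'}$ with $\vec f$ and $\vec g$ interchanged. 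It costs $[W]_{\mathscr A_{p,q}}^{\frac{p'}{q}(\frac1{p'}+\frac1q)}$.
\end{itemize}

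Two corrections to your bookkeeping. The $\vec g$-side cost is $[W]_{\mathscr A_{p,q}}^{\frac1{p'}}$, not $[W]_{\mathscr A_{p,q}}^{\frac1q}$; your version would give $[W]_{\mathscr A_{p,q}}^{\frac2q}$ overall. Also, reverse H\"older puts $\vec g$ in $L^{s'}$ with $s'$ slightly \emph{below} $q'$, not above.

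Both parts are present in every chain, so the bound is their sum, hence the maximum of the two exponents. It is not a choice of side according to whether $p'\le q$. Two alternative bounds for the same quantity would give the minimum of the exponents, which would contradict the sharpness of \eqref{eq:scalar}.
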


To prove Theorem \ref{thm:dyadic-form}, we need the following
chain-packing argument. Suppose that $\mathcal{D}$ is a shifted dyadic grid. 
Let $J$ be either $\{0,1,\dots,N\}$ for some $N\in\mathbb Z_+$ 
or $\mathbb Z_+$, and let
$\mathcal{C}:=\{Q_j\}_{j\in J}$ be a sequence of cubes in $\mathcal{D}$.
We call $\mathcal{C}$ a \emph{chain in $\mathcal{D}$} if $Q_{j+1}\subsetneq Q_j$
for $j,j+1\in J$. We call $Q_0$ the \emph{root} of $\mathcal{C}$ and write
$\mathcal{C}_{Q_0}:=\mathcal{C}$. 

The following lemma decomposes a general dyadic family into chains
whose roots satisfy the summability estimate in \eqref{eq:packing}.

\begin{lemma}\label{lem:chain-packing}
Let $\mathcal{D}$ be a shifted dyadic grid.
Let $\mathcal{F}\subset\mathcal{D}$ be a family of cubes whose edge lengths are
uniformly bounded above. Let $1<p<q<\infty$.
Then, for any $\vec{f}\in L^p$ and $\vec{g}\in L^{q'}$, there exist a
family $\mathscr R\subset\mathcal{F}$ and chains $\{\mathcal{C}_R\}_{R\in\mathscr{R}}$ 
that are pairwise disjoint as collections of cubes such that
\begin{align}\label{eq:partition}
\mathcal{F}=\bigcup_{R\in\mathscr R}\mathcal{C}_R
\end{align}
and
\begin{align}\label{eq:packing}
\sum_{R\in\mathscr R}
\left\|\vec{f}\mathbf{1}_R\right\|_{L^p}
\left\|\vec{g}\mathbf{1}_R\right\|_{L^{q'}}
\lesssim\left\|\vec{f}\right\|_{L^p}
\left\|\vec{g}\right\|_{L^{q'}},
\end{align}
where the implicit positive constant depends only on $p$ and $q$
and is independent of the upper bound for the
edge lengths of cubes in $\mathcal{F}$.
\end{lemma}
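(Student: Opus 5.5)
We normalize by homogeneity, $\|\vec f\|_{L^p}=\|\vec g\|_{L^{q'}}=1$; if either norm vanishes, \eqref{eq:packing} is trivial. The plan has three steps: construct the chains by a heavy-child selection, show that the masses of roots decay geometrically along nested roots, and sum using $\frac1p+\frac1{q'}>1$.

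\textbf{Step 1: the chains.} For any cube $Q$, set
\begin{align*}
\mu(Q):=\int_Q|\vec f(y)|^p\,dy,\quad \nu(Q):=\int_Q|\vec g(x)|^{q'}\,dx,\quad \sigma:=\mu+\nu .
\end{align*}
Then $\sigma$ is a measure of total mass at most $2$, and $\|\vec f\mathbf 1_R\|_{L^p}\|\vec g\mathbf 1_R\|_{L^{q'}}=\mu(R)^{\frac1p}\nu(R)^{\frac1{q'}}\le \sigma(R)^{s}$ with $s:=\frac1p+\frac1{q'}$. Since $p<q$, we have $s-1=\frac1p-\frac1q>0$.

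Since the edge lengths in $\mathcal F$ are bounded, every $Q\in\mathcal F$ is contained in a maximal cube of $\mathcal F$, and by Lemma \ref{lem:shifted-dyadic}(i) the maximal cubes are pairwise disjoint. For $P\in\mathcal F$, call the maximal elements of $\{Q\in\mathcal F:\,Q\subsetneq P\}$ the $\mathcal F$-children of $P$. They are pairwise disjoint, and there are at most $2^n$ of them, since each contains, or equals, a distinct dyadic child of $P$, so a maximizer exists. Among the $\mathcal F$-children, choose one, $P^*$, maximizing $\sigma$, breaking ties arbitrarily; this is the heavy child.

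Declare as roots the maximal cubes together with all non-heavy $\mathcal F$-children of all $P\in\mathcal F$. The chain $\mathcal C_R$ starts at $R$ and repeatedly passes to the heavy child, stopping if there are no $\mathcal F$-children. Every $Q\in\mathcal F$ is either maximal or has a unique $\mathcal F$-parent, the minimal cube of $\mathcal F$ strictly containing it. Following heavy-child links backward from $Q$ therefore leads to a unique root. Hence the chains are pairwise disjoint and give the partition \eqref{eq:partition}.

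\textbf{Step 2: geometric decay.} Let $R$ be a non-maximal root, a non-heavy child of $P$. Then $R$ and $P^*$ are disjoint and $\sigma(R)\le\sigma(P^*)$, so $\sigma(R)\le \frac12\sigma(P)$.

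Fix $x$. The roots containing $x$ are nested, by Lemma \ref{lem:shifted-dyadic}(i). Take two consecutive ones, $R'\subsetneq R$. Let $P$ be the parent of $R'$; then $P\ni x$, and $P$ is a cube of $\mathcal F$ containing $R'$. I claim $P\subseteq R$. The cube $P$ lies in some chain $\mathcal C_{R''}$ with $x\in R''$, and $R''$ cannot lie strictly between $R'$ and $R$ by consecutiveness. If instead $R''\supsetneq R$, then $R$ would sit strictly inside the chain of $R''$, forcing $R\in\mathcal C_{R''}$ and contradicting disjointness. This nesting bookkeeping is the step I expect to need the most care. It gives $\sigma(R')\le\frac12\sigma(P)\le \frac12\sigma(R)$.

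Consequently, for each $x$,
\begin{align*}
\sum_{R\in\mathscr R,\,R\ni x}\sigma(R)^{s-1}\le \sum_{k\ge0}\left(2^{-k}\cdot 2\right)^{s-1}=C_{p,q}.
\end{align*}

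\textbf{Step 3: summation.} By the normalization and the bound of Step 2,
\begin{align*}
\sum_{R\in\mathscr R}\sigma(R)^s=\int_{\mathbb R^n}\sum_{R\in\mathscr R,\,R\ni x}\sigma(R)^{s-1}\,d\sigma(x)\le 2C_{p,q}.
\end{align*}
Undoing the normalization gives \eqref{eq:packing}, with a constant depending only on $s-1=\frac1p-\frac1q$ and independent of the edge-length bound. This is exactly where the off-diagonal assumption $p<q$ enters.
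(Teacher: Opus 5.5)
Your argument is correct in substance and is essentially the paper's proof. The chains follow a distinguished child, every non-maximal root has at most half the $\sigma$-mass of its $\mathcal F$-parent, and $s=\frac1p+\frac1{q'}>1$ makes the sum geometric. There are two differences. First, the paper uses a threshold rule (continuation child iff $\nu(P)>\frac12\nu(Q)$, which is automatically unique) instead of an argmax. Second, the paper sums generation by generation via $\sum_{P\in\operatorname{Side}(R)}\nu(P)^\Gamma\le 2^{1-\Gamma}\nu(R)^\Gamma$, whereas you write $\sigma(R)^s=\int_R\sigma(R)^{s-1}\,d\sigma$ and use decay along the nested roots through each point. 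These are the same bookkeeping: your $k$-th root containing $x$ is the paper's generation-$k$ root containing $x$. Both routes give the constant $2^s/(1-2^{1-s})$.

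Two justifications need repair.

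\textbf{The $2^n$ bound on children is false.} An $\mathcal F$-child is contained in a dyadic child of $P$; it does not contain one. In fact $P$ can have infinitely many $\mathcal F$-children, for instance pairwise disjoint dyadic subcubes of $P$ of ever smaller size. A $\sigma$-maximizer still exists. The children are pairwise disjoint, so $\sum\sigma(\text{child})\le\sigma(P)<\infty$, and only finitely many children exceed any positive mass level. If all children have mass zero, any child will do. Alternatively, adopt the paper's threshold rule, which needs no maximizer at all.

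\textbf{The inclusion $P\subseteq R$ needs no chain bookkeeping.} By your own definition, $P$ is the minimal cube of $\mathcal F$ strictly containing $R'$. $R$ is such a cube, and the cubes of $\mathcal F$ containing $R'$ are totally ordered, so $P\subseteq R$ immediately. Your case analysis as written is incomplete. In the case $R''\supsetneq R$, concluding that $R\in\mathcal C_{R''}$ tacitly requires $R\supseteq P$, which is exactly this minimality fact.
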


Before proving Lemma \ref{lem:chain-packing}, we explain how it
reduces the proof of Theorem \ref{thm:dyadic-form} to the single-chain case.
For any $N\in\mathbb N$, let $\mathcal{D}_N:=\{Q\in\mathcal{D}:\ell(Q)\le 2^N\}$.
By the monotone convergence theorem, it suffices to show
Theorem \ref{thm:dyadic-form} for $\mathcal{D}_N$ with the implicit positive 
constant independent of $N$.
Assume that Theorem \ref{thm:dyadic-form}
has been established with the
shifted dyadic grids replaced by chains. 
For any $N\in\mathbb{N}$, $\vec{f}\in L^p$, and $\vec{g}\in L^{q'}$, 
applying this assumption and Lemma \ref{lem:chain-packing} to
$\mathcal{D}_N$, we obtain a family $\mathscr R\subset\mathcal{D}_N$ 
and chains $\{\mathcal{C}_R\}_{R\in\mathscr{R}}$ such that
\begin{align*}
\Lambda_{\mathcal{D}_N}\left(\vec f,\vec g\right)
&=\sum_{R\in\mathscr R}
\Lambda_{\mathcal{C}_R}\left(\vec f\mathbf{1}_R,\vec g\mathbf{1}_R\right)\\
&\lesssim[W]_{\mathscr A_{p,q}}^{(1-\frac{\alpha}{n})\max\{1,\frac{p'}{q}\}}
\sum_{R\in\mathscr R}
\left\|\vec f\mathbf{1}_R\right\|_{L^p}\left\|\vec g\mathbf{1}_R\right\|_{L^{q'}}\\
&\lesssim[W]_{\mathscr A_{p,q}}^{(1-\frac{\alpha}{n})\max\{1,\frac{p'}{q}\}}
\left\|\vec f\right\|_{L^p}\left\|\vec g\right\|_{L^{q'}},
\end{align*}
and hence Theorem \ref{thm:dyadic-form} holds for $\mathcal{D}_N$
with the implicit positive constant independent of $N$.
Thus, Lemma \ref{lem:chain-packing} reduces the proof of
Theorem \ref{thm:dyadic-form} to the single-chain case.

We now prove Lemma \ref{lem:chain-packing}. 
The proof of this lemma is inspired
by ideas from the proof of \cite[Theorem 1.3]{l26}
and strongly depends on the assumption $p<q$.

\begin{proof}
If either $\|\vec f\|_{L^p}=0$ or
$\|\vec g\|_{L^{q'}}=0$, it suffices to
take $\mathscr R:=\mathcal{F}$ and $\mathcal{C}_R:=\{R\}$ for any $R\in\mathcal{F}$.
Thus, in the remaining proof, we assume that both norms $\|\vec f\|_{L^p}$ 
and $\|\vec g\|_{L^{q'}}$ are positive.
For any measurable set $E\subset\mathbb{R}^n$, let
\begin{align*}
\nu(E):=\frac{\int_E|\vec{f}(x)|^p\,dx}{\|\vec{f}\|_{L^p}^p}
+\frac{\int_E|\vec{g}(x)|^{q'}\,dx}{\|\vec{g}\|_{L^{q'}}^{q'}}.
\end{align*}
Let $\Gamma:=\frac{1}{p}+\frac{1}{q'}$.
Since $p<q$, it follows that $\Gamma>1$. Moreover, for any $R\in\mathcal{F}$,
the definition of $\nu$ yields
\begin{align*}
\frac{\|\vec{f}\mathbf{1}_R\|_{L^p}\|\vec{g}\mathbf{1}_R\|_{L^{q'}}}
{\|\vec{f}\|_{L^p}\|\vec{g}\|_{L^{q'}}}
\leq\left[\nu(R)\right]^{\frac{1}{p}}\left[\nu(R)\right]^{\frac{1}{q'}}
=\left[\nu(R)\right]^\Gamma.
\end{align*}
Therefore, to prove \eqref{eq:packing}, it suffices to construct a
partition into chains whose collection $\mathscr R$ of roots satisfies
\begin{align}\label{eq:mass-sum}
\sum_{R\in\mathscr R}\left[\nu(R)\right]^\Gamma\lesssim1.
\end{align}

We now state how to choose these chains.
For any $P,Q\in\mathcal{F}$ with $P\subsetneq Q$, we call $P$ an
\emph{$\mathcal{F}$-child of $Q$} if there is no $R\in\mathcal{F}$ such that
$P\subsetneq R\subsetneq Q$.  
For any $Q\in\mathcal{F}$, an $\mathcal{F}$-child $P$ of 
$Q$ is called a \emph{continuation child} if
\begin{align}\label{eq:heavy}
\nu(P)>\frac{1}{2}\nu(Q).
\end{align}
Observe that the $\mathcal{F}$-children of any $Q\in\mathcal{F}$ 
are pairwise disjoint, and hence, for any $Q\in\mathcal{F}$, 
there is at most one continuation child of $Q$.
Indeed, if there are two distinct continuation
children $P_1$ and $P_2$ of $Q$ satisfying \eqref{eq:heavy},
then $\nu(P_1)+\nu(P_2)>\nu (Q)$. However, 
the disjointness of $P_1$ and $P_2$ yields
$\nu(P_1)+\nu(P_2)\leq\nu (Q)$, which is a contradiction.
All other $\mathcal{F}$-children of $Q$ are called \emph{restart children}.

Since $\mathcal{F}\subset\mathcal{D}$ and the edge lengths of cubes in $\mathcal{F}$
are uniformly bounded above, every cube in $\mathcal{F}$
is contained in a maximal cube of $\mathcal{F}$.
Let $\mathscr R_0$ denote the collection of all maximal cubes in
$\mathcal{F}$. By Lemma \ref{lem:shifted-dyadic}(i), the members of $\mathscr R_0$ are
pairwise disjoint. For any $R\in\mathscr R_0$, 
construct a chain with root $R$ as follows.
Let $Q_0:=R$. For any $j\in\mathbb{Z}_+$,
if $Q_j$ has a continuation child, denote it by
$Q_{j+1}$; otherwise stop. Since every cube has at most one
continuation child, this procedure produces a finite or
infinite chain, which we denote by $\mathcal{C}_R$.
Let $\operatorname{Side}(R)$ be the collection of all restart children of cubes
in $\mathcal{C}_R$. Note that the cubes in $\operatorname{Side}(R)$ are pairwise disjoint.
Indeed, children of the same cube are disjoint, and a restart child
is disjoint from the continuation child and all its descendants.
Combining this disjointness and the fact that
every cube in $\operatorname{Side}(R)$ is contained in $R$, we conclude that,
for any $R\in \mathscr R_0$,
\begin{align}\label{eq:side-sum}
\sum_{P\in\operatorname{Side}(R)}\nu(P)
\leq\nu(R).
\end{align}
Moreover, if $P\in\operatorname{Side}(R)$ and $Q\in\mathcal{C}_R$ is its parent,
then
\begin{align}\label{eq:side-half}
\nu(P)
\leq
\frac12\nu(Q)
\leq
\frac12\nu(R).
\end{align}
From the assumption that $\Gamma>1$, \eqref{eq:side-sum}, and
\eqref{eq:side-half}, we deduce that, for any $R\in \mathscr R_0$,
\begin{align}\label{eq:contraction}
\sum_{P\in\operatorname{Side}(R)}\left[\nu(P)\right]^\Gamma\leq
\left[\frac{\nu(R)}{2}\right]^{\Gamma-1}
\sum_{P\in\operatorname{Side}(R)}\nu(P)
\leq2^{1-\Gamma}\left[\nu(R)\right]^\Gamma.
\end{align}
We next iterate this construction. To be precise, 
for $m\in\mathbb{Z}_+$, let
\begin{align*}
\mathscr R_{m+1}:=
\bigcup_{R\in\mathscr R_m}\operatorname{Side}(R),
\end{align*}
and construct the chain $\mathcal{C}_R$ of continuation children for any
$R\in\mathscr R_{m+1}$ in the same way. Let
\begin{align*}
\mathscr R:=\bigcup_{m\in\mathbb{Z}_+}\mathscr R_m.
\end{align*}
We now verify the partition in \eqref{eq:partition}.
The inclusion $\bigcup_{R\in\mathscr R}\mathcal{C}_R\subset\mathcal{F}$
follows from the construction. To show the reverse inclusion,
let $Q\in\mathcal{F}$.
Since the edge lengths of cubes in $\mathcal{F}$ are uniformly
bounded above, $Q$ is connected to a unique maximal
cube in $\mathscr R_0$ by a finite path in $\mathcal{F}$. Starting from
this maximal cube and moving downward along the path, each child
either is the continuation child of its parent and hence remains in
the same chain, or is a restart child and hence becomes the root of a
new chain belonging to $\mathscr R$. Thus, every cube in $\mathcal{F}$ belongs to $\mathcal{C}_R$ for some
$R\in\mathscr R$. Moreover, the construction also guarantees that each cube 
belongs to a unique chain chosen above.
Therefore, \eqref{eq:partition} holds and the
chains are pairwise disjoint as collections of cubes.

Finally, we verify \eqref{eq:mass-sum}. By the facts that
the members of $\mathscr R_0$ are pairwise disjoint and
that $\Gamma\in(1,\infty)$, we find that
\begin{align*}
\sum_{R\in\mathscr R_0}\left[\nu(R)\right]^\Gamma
\leq\left[\sum_{R\in\mathscr R_0}\nu(R)\right]^\Gamma
\leq \left[\nu(\mathbb R^n)\right]^\Gamma=2^\Gamma.
\end{align*}
Applying \eqref{eq:contraction} to each root in
the $m$-th generation, we conclude that, for any $m\in\mathbb{Z}_+$,
\begin{align*}
\sum_{R\in\mathscr R_{m+1}}\left[\nu(R)\right]^\Gamma
\leq2^{1-\Gamma}\sum_{R\in\mathscr R_m}\left[\nu(R)\right]^\Gamma,
\end{align*}
and hence
\begin{align*}
\sum_{R\in\mathscr R}\left[\nu(R)\right]^\Gamma=
\sum_{m\in\mathbb{Z}_+}\sum_{R\in\mathscr R_m}
\left[\nu(R)\right]^\Gamma\leq2^\Gamma\sum_{m\in\mathbb{Z}_+}
2^{m(1-\Gamma)}=\frac{2^\Gamma}{1-2^{1-\Gamma}}<\infty,
\end{align*}
where the last inequality follows from the fact that $\Gamma\in(1,\infty)$.
Therefore, \eqref{eq:mass-sum} holds, which further implies \eqref{eq:packing}.
This completes the proof of Lemma \ref{lem:chain-packing}.
\end{proof}

\subsection{Single-Chain Estimates and Proof of Theorem \ref{thm:main}}\label{sec:chain-estimates}

In this subsection, we first prove Theorem \ref{thm:dyadic-form} 
with the shifted dyadic grids replaced by chains
(see Lemma \ref{lem-simple-chain}). 
The proof of this chain estimate 
is based on an idea of Nazarov et al. \cite[Lemma 5.7]{nptv17}. 
Combining this estimate with the chain-packing argument established 
in the previous subsection, we complete the proof of Theorem \ref{thm:dyadic-form},
and finally prove Theorem \ref{thm:main}.

To obtain the chain estimate, we need the following two lemmas. 
Let $\mathcal{D}$ be a shifted dyadic grid and
let $\sigma\in[0,n)$. The
\emph{dyadic fractional Hardy--Littlewood maximal operator}
$M_\sigma^{\mathcal{D}}$ is defined by setting, 
for any measurable function
$f$ on $\mathbb{R}^n$ and for any $x\in \mathbb{R}^n$,
\begin{align*}
M_\sigma^{\mathcal{D}}(f)(x)
:=\sup_{Q\in\mathcal{D}:\,x\in Q}|Q|^{\frac{\sigma}{n}}\fint_Q|f(y)|\,dy.
\end{align*}
In particular, let $M^{\mathcal{D}}:=M_0^{\mathcal{D}}$. By the well-known
boundedness of the dyadic Hardy--Littlewood maximal operator
(see, for example, \cite[Theorem 15.1]{ln19}),
we find that, for any $r\in(1,\infty)$ and $f\in L^r$,
\begin{align}\label{eq:maximal}
\left\|M^{\mathcal{D}}(f)\right\|_{L^r}\le r'\|f\|_{L^r}.
\end{align}
Using \eqref{eq:maximal}, we obtain the following
boundedness of fractional operators $M_\sigma^{\mathcal{D}}$.

\begin{lemma}\label{lem:frac-max}
Let $\mathcal{D}$ be a shifted dyadic grid.
Let $\sigma\in(0,n)$, $r\in(1,\frac{n}{\sigma})$, and
$s\in(r,\infty)$ satisfy $\frac{1}{s}=\frac{1}{r}-\frac{\sigma}{n}$.
Then, for any $f\in L^r$,
\begin{align}\label{eq:frac-max}
\left\|M_\sigma^{\mathcal{D}}(f)\right\|_{L^s}
\leq\left(r'\right)^{\frac{r}{s}}\left\|f\right\|_{L^r}.
\end{align}
\end{lemma}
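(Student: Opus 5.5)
The plan is the classical Hedberg-type interpolation: bound $|Q|^{\frac{\sigma}{n}}\fint_Q|f|$ pointwise by a power of $M^{\mathcal{D}}(f)(x)$ times a power of $\|f\|_{L^r}$, and then apply \eqref{eq:maximal}. We may assume $\|f\|_{L^r}\in(0,\infty)$.

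Fix $x\in\mathbb R^n$ and a cube $Q\in\mathcal{D}$ containing $x$. We have two elementary bounds. First, trivially,
\begin{align*}
|Q|^{\frac{\sigma}{n}}\fint_Q|f(y)|\,dy\leq|Q|^{\frac{\sigma}{n}}M^{\mathcal{D}}(f)(x).
\end{align*}
Second, by H\"older's inequality,
\begin{align*}
|Q|^{\frac{\sigma}{n}}\fint_Q|f(y)|\,dy
\leq|Q|^{\frac{\sigma}{n}-\frac1r}\|f\|_{L^r}
=|Q|^{-\frac1s}\|f\|_{L^r}.
\end{align*}
Taking the minimum and writing $t:=|Q|$, we get
\begin{align*}
|Q|^{\frac{\sigma}{n}}\fint_Q|f|\leq\min\left\{t^{\frac{\sigma}{n}}A,\ t^{-\frac1s}B\right\},
\end{align*}
where $A:=M^{\mathcal{D}}(f)(x)$ and $B:=\|f\|_{L^r}$. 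Now use $\min\{a,b\}\leq a^{\theta}b^{1-\theta}$ with $\theta$ chosen so that the powers of $t$ cancel, that is, $\theta\frac{\sigma}{n}=(1-\theta)\frac1s$. Since $\frac{\sigma}{n}+\frac1s=\frac1r$, this gives $\theta=\frac{r}{s}$ and $1-\theta=\frac{r\sigma}{n}$. Taking the supremum over $Q\ni x$ then yields the pointwise bound
\begin{align*}
M_\sigma^{\mathcal{D}}(f)(x)\leq \left[M^{\mathcal{D}}(f)(x)\right]^{\frac rs}\|f\|_{L^r}^{1-\frac rs}.
\end{align*}

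Next, take the $L^s$ norm of both sides and apply \eqref{eq:maximal}:
\begin{align*}
\left\|M_\sigma^{\mathcal{D}}(f)\right\|_{L^s}\leq\left\|M^{\mathcal{D}}(f)\right\|_{L^r}^{\frac rs}\|f\|_{L^r}^{1-\frac rs}
\leq (r')^{\frac rs}\|f\|_{L^r}.
\end{align*}
This is exactly \eqref{eq:frac-max}, with the stated constant.

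No step is a serious obstacle. The one point that needs checking is the exponent bookkeeping: the choice $\theta=\frac rs$ must make the power of $|Q|$ vanish, so that the bound is uniform in $Q$, and it must turn the $L^s$ norm of $(M^{\mathcal{D}}f)^{r/s}$ into $\|M^{\mathcal{D}}f\|_{L^r}^{r/s}$. Both hold because of the relation $\frac1s=\frac1r-\frac{\sigma}{n}$.
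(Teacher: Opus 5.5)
Your proof is correct and follows essentially the same route as the paper: both derive the pointwise bound $M_\sigma^{\mathcal{D}}(f)(x)\le\|f\|_{L^r}^{1-\frac rs}[M^{\mathcal{D}}(f)(x)]^{\frac rs}$ and then apply \eqref{eq:maximal}. The only cosmetic difference is how the pointwise bound is reached. The paper applies H\"older's inequality directly to the factor $(\fint_Q|f|)^{\theta}$ with $\theta=\frac{\sigma r}{n}$, while you take a weighted geometric mean of the trivial bound and the H\"older bound; this is the same computation.
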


\begin{proof}
Let $f\in L^r$ and $\theta:=\frac{\sigma r}{n}$.
By H\"older's inequality, we find that, 
for any cube $Q\in\mathcal{D}$, 
\begin{align*}
|Q|^{\frac{\sigma}{n}}\fint_Q|f(y)|\,dy
\le\|f\|_{L^r}^{\theta}
\left[\fint_Q|f(y)|\,dy\right]^{1-\theta}.
\end{align*}
Since $1-\theta=\frac{r}{s}$, for any 
$x\in\mathbb{R}^n$, taking the supremum of both sides over
all cubes in $\mathcal{D}$ containing $x$ yields
\begin{align*}
M_\sigma^{\mathcal{D}}(f)(x)
\le\|f\|_{L^r}^{1-\frac{r}{s}}
\left[M^{\mathcal{D}}(f)(x)\right]^{\frac{r}{s}}.
\end{align*}
This, together with \eqref{eq:maximal},
further yields \eqref{eq:frac-max}. This completes the proof
of Lemma \ref{lem:frac-max}.
\end{proof}

To present the next lemma, we first recall the concept of sparse families
(see, for instance, \cite[Definition 2.2]{nptv17}).
\begin{definition}
Let $\mathcal{D}$ be a shifted dyadic grid in $\mathbb R^n$ and let
$\eta\in(0,1)$. A family $\mathcal{S}\subset\mathcal{D}$ is called
\emph{$\eta$-sparse} if, for any $Q\in\mathcal{S}$, there exists a
measurable set $E_Q\subset Q$ such that $|E_Q|\geq\eta|Q|$
and the sets $\{E_Q\}_{Q\in\mathcal{S}}$ are pairwise disjoint.
Any such family $\{E_Q\}_{Q\in\mathcal{S}}$ is called a family of
\emph{major subsets associated with $\mathcal{S}$}.
\end{definition}

By combining the fractional maximal estimate in Lemma \ref{lem:frac-max} 
with the reverse H\"{o}lder estimate in Lemma \ref{lem:op-RH},
we establish the following off-diagonal Carleson-type embedding result.

\begin{lemma}\label{lem:off-diagonal-Carleson}

Let $\alpha$, $p$, and $q$ satisfy \eqref{eq:range}, and let
$W\in\mathscr A_{p,q}$. Suppose that $\mathcal{D}$ is a shifted dyadic grid
and $\mathcal{F}\subset\mathcal{D}$ is an $\eta$-sparse family for some
$\eta\in(0,1)$. Let $\{A_Q\}_{Q\in\mathcal{F}}$ be a family of
reducing operators of order $q$ for $W$.
Then, for any $g\in L^{q'}$,
\begin{align}\label{eq:embedding}
\left\{\sum_{Q\in\mathcal{F}}\left[
|Q|^{-\frac1q}\int_Q \left\|W(x)A_Q^{-1}\right\||g(x)|\,dx
\right]^{p'}\right\}^{\frac1{p'}}
\lesssim[W]_{\mathscr A_{p,q}}^{\frac{1}{p'}}\|g\|_{L^{q'}},
\end{align}
where the implicit positive constant depends 
only on $n$, $d$, $p$, $q$, and $\eta$.
\end{lemma}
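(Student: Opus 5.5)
The plan is to combine the reverse H\"older estimate of Lemma \ref{lem:op-RH} with the fractional maximal bound of Lemma \ref{lem:frac-max}. Set $\varepsilon:=\varepsilon_W$ and $s:=q(1+\varepsilon)$. For each $Q\in\mathcal F$, H\"older's inequality with exponents $s$ and $s'$ gives
\begin{align*}
|Q|^{-\frac1q}\int_Q\left\|W(x)A_Q^{-1}\right\||g(x)|\,dx
\le|Q|^{1-\frac1q}\left(\fint_Q\left\|W(x)A_Q^{-1}\right\|^{s}\,dx\right)^{\frac1s}
\left(\fint_Q|g(x)|^{s'}\,dx\right)^{\frac1{s'}}.
\end{align*}
By Lemma \ref{lem:op-RH} and \eqref{eq:reduce}, the middle factor is bounded by a constant times $(\fint_Q\|W(x)A_Q^{-1}\|^q\,dx)^{\frac1q}\sim\|A_QA_Q^{-1}\|=1$. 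So everything reduces to bounding
\begin{align*}
\sum_{Q\in\mathcal F}|Q|^{\frac{p'}{q'}}\left(\fint_Q|g|^{s'}\right)^{\frac{p'}{s'}}.
\end{align*}
The weight is used only through the size of $\varepsilon$, and that is where the factor $[W]_{\mathscr A_{p,q}}^{1/p'}$ will come from.

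Next I will pass to a fractional maximal function using sparseness. Write $h:=|g|^{s'}$; then $h\in L^{r}$ with $r:=q'/s'>1$, since $s'<q'$. Because $|Q|\le\eta^{-1}|E_Q|$, each term is at most a constant times $\int_{E_Q}$ of $|Q|^{\frac{p'}{q'}-1}(\fint_Q h)^{p'/s'}$. Writing $|Q|^{\frac{p'}{q'}-1}=(|Q|^{\frac{\sigma}{n}})^{p'/s'}$ defines $\sigma$ through $\frac{\sigma}{n}\cdot\frac{p'}{s'}=\frac{p'}{q'}-1$. The point is that $\frac{p'}{q'}-1>0$ because $p<q$, so $\sigma>0$. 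By disjointness of the sets $E_Q$ the sum is then at most
\begin{align*}
\eta^{-1}\int_{\mathbb R^n}\left[M^{\mathcal D}_\sigma(h)\right]^{p'/s'}.
\end{align*}
For Lemma \ref{lem:frac-max} to apply with target exponent $t=p'/s'$, I need $\frac1t=\frac1r-\frac\sigma n$. Checking: $\frac{s'}{p'}=\frac{s'}{q'}-\frac{\sigma}{n}$ is equivalent to $\frac{\sigma}{n}=s'(\frac1{q'}-\frac1{p'})$, which matches the definition of $\sigma$, and also $\sigma<n$. This gives
\begin{align*}
\left\|M_\sigma^{\mathcal D}h\right\|_{L^{t}}\le(r')^{r/t}\|h\|_{L^r}.
\end{align*}
Since $\|h\|_{L^r}^{t}=\|g\|_{L^{q'}}^{p'}$, the left side of \eqref{eq:embedding} raised to the power $p'$ is at most a constant times $(r')^{r}\|g\|_{L^{q'}}^{p'}$.

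The main step, and the expected obstacle, is tracking the constant $(r')^{r}$. We have $r=q'/s'=\frac{q'(s-1)}{s}$, and $s=q(1+\varepsilon)$ is close to $q$. This gives $r-1\approx q'\varepsilon/(q-1)\cdot\frac{1}{q}$-type quantities, so $r-1\sim_{q}\varepsilon$. Hence $r'\sim\varepsilon^{-1}\sim[W]_{\mathscr A_{p,q}}$, while $r$ stays bounded, close to $1$. Since $r\le q'$, we get $(r')^{r}\lesssim (r')^{r}\le C[W]^{r}$. But $r>1$ means this yields $[W]^{r}$ rather than $[W]^1$, which is slightly worse; it has to be fixed.

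To fix it, use $(r')^{r}=r'\cdot(r')^{r-1}$ and $(r')^{r-1}\le(C\varepsilon^{-1})^{C\varepsilon}\lesssim1$, because $x^{1/x}$ is bounded. So $(r')^r\lesssim[W]_{\mathscr A_{p,q}}$, and taking $p'$-th roots gives exactly $[W]_{\mathscr A_{p,q}}^{1/p'}$. Finally, I will verify that the constants $\sigma<n$, $\eta$, and the dependence on $d$, $q$ in Lemma \ref{lem:op-RH} are uniform in $\varepsilon\in(0,1/\tau_n]$.
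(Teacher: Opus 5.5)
Your proposal is correct and follows essentially the same route as the paper. The common steps are H\"older's inequality with the reverse H\"older exponent $s=q(1+\varepsilon_W)$ from Lemma~\ref{lem:op-RH}, and domination of the sparse sum by $\|M^{\mathcal D}_{\alpha s'}(|g|^{s'})\|_{L^{p'/s'}}^{p'/s'}$, where your $\sigma$ is exactly $\alpha s'$. Then come Lemma~\ref{lem:frac-max} with $r=q'/s'$, and the splitting $(r')^{r}=r'(r')^{r-1}$ with $r'\sim\varepsilon_W^{-1}$ and $(r')^{r-1}\lesssim 1$. Your muddled intermediate remark giving $(r')^r\lesssim[W]_{\mathscr A_{p,q}}^{r}$ is superseded by that final splitting, so it does no harm.
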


\begin{proof}
By the monotone convergence theorem,  
it suffices to prove \eqref{eq:embedding} for a finite
family $\mathcal{F}$ with the implicit positive constant independent of its cardinality.
Let $\varepsilon_W$ be as in \eqref{eq:epsilon} and let
$s:=q(1+\varepsilon_W)$. 
For any $Q\in \mathcal{F}$ and $x\in \mathbb{R}^n$, 
let $a_Q(x):=\|W(x)A_Q^{-1}\|$.
Observe that $s'\in(1,q')$. 
Applying Lemma \ref{lem:op-RH} and \eqref{eq:reduce},
we conclude that, for any $Q\in\mathcal{F}$,
\begin{align*}
\left(\fint_Q[a_Q(x)]^s\,dx\right)^{\frac1s}\lesssim1.
\end{align*}
Fix $g\in L^{q'}$.
From this and H\"older's inequality, we deduce that, 
for any $Q\in\mathcal{F}$,
\begin{align}\label{eq:local-pair}
|Q|^{-\frac1q}\int_Qa_Q(x)|g(x)|\,dx\lesssim
|Q|^{\frac1{q'}}\left(\fint_Q|g(x)|^{s'}\,dx\right)^{\frac1{s'}}.
\end{align}
Note that $\frac{p'}{q'}=1+\frac{\alpha p'}{n}$.
Therefore, taking the $p'$-power of both sides of 
\eqref{eq:local-pair} and
summing over all $Q\in\mathcal{F}$, we obtain
\begin{align}\label{eq:frac-sum}
&\sum_{Q\in\mathcal{F}}\left[
|Q|^{-\frac1q}\int_Qa_Q(x)|g(x)|\,dx
\right]^{p'}\nonumber\\
&\qquad\lesssim\sum_{Q\in\mathcal{F}}|Q|
\left[|Q|^{\frac{\alpha s'}{n}}
\fint_Q|g(x)|^{s'}\,dx
\right]^{\frac{p'}{s'}}.
\end{align}
We now estimate the right-hand side of \eqref{eq:frac-sum}.
From the definition of $\eta$-sparse families, we infer that
there exists a family $\{E_Q\}_{Q\in\mathcal{F}}$ of pairwise disjoint sets such that,
for any $Q\in\mathcal{F}$, $E_Q\subset Q$ and $|E_Q|\geq\eta|Q|$.
Using the properties of $\{E_Q\}_{Q\in\mathcal{F}}$ and the definition of 
$M_{\alpha s'}^{\mathcal{D}}$, we conclude that, for any 
$Q\in\mathcal{F}$ and $x\in E_Q$,
\begin{align*}
|Q|^{\frac{\alpha s'}{n}}
\fint_Q|g(y)|^{s'}\,dy
\leq M_{\alpha s'}^{\mathcal{D}}(|g|^{s'})(x),
\end{align*}
and hence
\begin{align}\label{eq:max-bound}
\sum_{Q\in\mathcal{F}}\left[
|Q|^{-\frac1q}\int_Qa_Q(x)|g(x)|\,dx
\right]^{p'}\lesssim
\left\|M_{\alpha {s'}}^{\mathcal{D}}(|g|^{s'})
\right\|_{L^{\frac{p'}{s'}}}^{\frac{p'}{s'}}.
\end{align}
Let $r:=\frac{q'}{s'}$ and $u:=\frac{p'}{s'}$.
Then $1<r<u$ and
\begin{align*}
\frac1r-\frac1u=\frac{\alpha {s'}}{n}.
\end{align*}
In particular, $r\in(1,\frac{n}{\alpha {s'}})$. Since
$u{s'}=p'$ and $r{s'}=q'$, applying Lemma \ref{lem:frac-max} to
\eqref{eq:max-bound}, we obtain
\begin{align}\label{eq:seq-bound}
\left\{\sum_{Q\in\mathcal{F}}\left[
|Q|^{-\frac1q}\int_Qa_Q(x)|g(x)|\,dx
\right]^{p'}\right\}^{\frac1{p'}}
&\lesssim\left[(r')^{\frac r u}
\left\||g|^{s'}\right\|_{L^r}
\right]^{\frac1{s'}}\nonumber\\
&=(r')^{\frac r{p'}}\|g\|_{L^{q'}}.
\end{align}
By the definitions of $s$ and $r$, we find that
\begin{align*}
r=1+\frac{\varepsilon_W}
{(q-1)(1+\varepsilon_W)}.
\end{align*}
Thus, the fact that $\varepsilon_W\in(0,1]$ yields
\begin{align*}
r'=q+\frac{q-1}{\varepsilon_W}
\sim\varepsilon_W^{-1}\text{ and }
r-1=\frac{\varepsilon_W}
{(q-1)(1+\varepsilon_W)}\sim\varepsilon_W.
\end{align*}
Using the fact that
$\sup_{v\in(0,1]}v^{-\lambda v}<\infty$ for any
$\lambda\in(0,\infty)$ and the definition
\eqref{eq:epsilon} of $\varepsilon_W$, we conclude that
\begin{align}\label{eq:RH-factor}
(r')^{\frac r{p'}}=
(r')^{\frac1{p'}}(r')^{\frac{r-1}{p'}}
\lesssim\varepsilon_W^{-\frac1{p'}}
\varepsilon_W^{-C\varepsilon_W}
\lesssim\varepsilon_W^{-\frac1{p'}}
\sim[W]_{\mathscr A_{p,q}}^{\frac{1}{p'}},
\end{align}
where the positive constant $C$ depends 
only on $q$. Combining \eqref{eq:seq-bound} and 
\eqref{eq:RH-factor}, we obtain \eqref{eq:embedding} 
for finite families with a constant independent of their cardinalities.
This completes the proof of Lemma \ref{lem:off-diagonal-Carleson}.
\end{proof}

Using the decomposition in the proof of \cite[Lemma 5.7]{nptv17},
we can now prove the following chain estimate.

\begin{lemma}\label{lem-simple-chain}
Let $\alpha$, $p$, and $q$ satisfy \eqref{eq:range}, and let
$W\in\mathscr A_{p,q}$. Suppose that $\mathcal{D}$ is a shifted dyadic grid 
in $\mathbb R^n$ and $\mathcal{C}$ is a chain in $\mathcal{D}$.
Then, for any $\vec{f}\in L^p$ and $\vec{g}\in L^{q'}$,
\begin{align}\label{eq:chain}
\Lambda_{\mathcal{C}}\left(\vec{f},\vec{g}\right)\lesssim
[W]_{\mathscr A_{p,q}}^{(1-\frac{\alpha}{n})\max\{1,\frac{p'}{q}\}}
\left\|\vec{f}\right\|_{L^p}\left\|\vec{g}\right\|_{L^{q'}},
\end{align}
where the implicit positive constant depends only on $n$, $d$, $\alpha$,
and $p$.
\end{lemma}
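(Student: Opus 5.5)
The chain $\mathcal C=\{Q_j\}_{j\in J}$ is sparse, since $Q_{j+1}\subsetneq Q_j$ in a dyadic grid forces $|Q_{j+1}|\le 2^{-n}|Q_j|$. So with $E_j:=Q_j\setminus Q_{j+1}$ (and $E_j:=Q_j$ for a last cube) the sets $\{E_j\}$ are pairwise disjoint and $|E_j|\ge(1-2^{-n})|Q_j|$. This makes Lemma \ref{lem:off-diagonal-Carleson} applicable to $\mathcal C$, and also to $W^{-1}\in\mathscr A_{q',p'}$ by Proposition \ref{prop-weight-reducing}. Following \cite[Lemma 5.7]{nptv17}, I will decompose both integration variables along the chain. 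For each $j$, $Q_j=\bigcup_{k\ge j}E_k$ up to a null set (an infinite chain shrinks to a point). Hence
\begin{align*}
\Lambda_{\mathcal C}\left(\vec f,\vec g\right)
=\sum_{j}|Q_j|^{\frac\alpha n-1}\sum_{k,l\ge j}\int_{E_k}\int_{E_l}
\left\|W(x)W^{-1}(y)\right\|\left|\vec f(y)\right|\left|\vec g(x)\right|\,dy\,dx .
\end{align*}
I split this into the part with $k\le l$ (the point $x$ is in the larger annulus) and the part with $l<k$.

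In the part $k\le l$, both $x,y\in Q_k$. I factor $\|W(x)W^{-1}(y)\|\le\|W(x)A_{Q_k}^{-1}\|\,\|A_{Q_k}W^{-1}(y)\|$ with $A_{Q_k}$ of order $q$ for $W$. Then I sum the geometric series $\sum_{j\le k}|Q_j|^{\frac\alpha n-1}\lesssim|Q_k|^{\frac\alpha n-1}$, which is valid because $\frac\alpha n-1<0$ and the sizes grow geometrically going up. This bounds the part by
\begin{align*}
\sum_k\left[|Q_k|^{-\frac1q}\int_{E_k}\left\|W(x)A_{Q_k}^{-1}\right\|\left|\vec g(x)\right|dx\right]
\left[|Q_k|^{-\frac1{p'}}\int_{Q_k}\left\|A_{Q_k}W^{-1}(y)\right\|\left|\vec f(y)\right|dy\right],
\end{align*}
using $\frac\alpha n-1=-\frac1q-\frac1{p'}$. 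Hölder in $k$ with exponents $p'$ and $p$ lets the first factor be bounded by Lemma \ref{lem:off-diagonal-Carleson}, which costs $[W]_{\mathscr A_{p,q}}^{1/p'}$. The second factor should cost $[W]_{\mathscr A_{p,q}}^{1/q}$, via \eqref{eq:dual-reduce}, for a total of $\frac1{p'}+\frac1q=1-\frac\alpha n$.

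The part $l<k$ is handled symmetrically by duality. I insert $\widetilde A_{Q_l}$, the reducing operator of order $p'$ for $W^{-1}$, writing $\|W(x)W^{-1}(y)\|\le\|W(x)\widetilde A_{Q_l}\|\,\|\widetilde A_{Q_l}^{-1}W^{-1}(y)\|$, and run the same argument with the roles of $(W,p,q,\vec f)$ and $(W^{-1},q',p',\vec g)$ interchanged. By Remark \ref{rmk-dual}(i), $[W^{-1}]_{\mathscr A_{q',p'}}\sim[W]_{\mathscr A_{p,q}}^{p'/q}$. So this part costs $[W]_{\mathscr A_{p,q}}^{(1-\frac\alpha n)\frac{p'}{q}}$. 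Since $[W]_{\mathscr A_{p,q}}\ge1$, the sum of the two parts is at most $[W]_{\mathscr A_{p,q}}^{(1-\frac\alpha n)\max\{1,\frac{p'}{q}\}}$, as claimed.

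The main obstacle is the $\ell^p$ bound on the second factor, $\{|Q_k|^{-1/p'}\int_{Q_k}\|A_{Q_k}W^{-1}\||\vec f|\}_k$. A crude Hölder on each $Q_k$ gives $[W]^{1/q}\|\vec f\mathbf 1_{Q_k}\|_{L^p}$, which is not $\ell^p$-summable over the chain. To fix this I would expand $Q_k=\bigcup_{l\ge k}E_l$ and write $\|A_{Q_k}W^{-1}(y)\|\le\|A_{Q_k}A_{Q_l}^{-1}\|\,\|A_{Q_l}W^{-1}(y)\|$ for $y\in E_l$, so that the $f$-integrals live on the disjoint sets $E_l$. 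The goal is to show that the matrix factors $\|A_{Q_k}A_{Q_l}^{-1}\|$, weighted by $(|Q_l|/|Q_k|)^{1/p'}$, form a bounded Schur-type kernel in $(k,l)$ without any additional power of $[W]$. This is exactly the step where \cite[Lemma 5.7]{nptv17} uses the chain structure, and I would first try to reproduce that argument, using the reverse Hölder inequality (Lemma \ref{lem:op-RH}) to get decay in $l-k$. If that fails, the fallback is to allocate the reverse Hölder gain differently, placing it on the $f$-side through the dual version of Lemma \ref{lem:off-diagonal-Carleson} for $W^{-1}$. This again yields the exponent $(1-\frac\alpha n)\frac{p'}{q}$, which is still admissible.
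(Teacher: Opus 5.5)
There is a genuine gap: the estimates are attached to the wrong variables, and the inequality you flag as the main obstacle is false, not merely hard.

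Your annular decomposition and the geometric summation over $j$ are fine; they are a legitimate substitute for the paper's absorption of $\Lambda_3$. The trouble starts after that. In your part $k\le l$, $x$ runs over the disjoint sets $E_k$ and $y$ over the full cube $Q_k$. You spend Lemma \ref{lem:off-diagonal-Carleson} on the $x$-side, where disjointness already gives summability. You are then left needing
\[
\Bigl\|\Bigl\{|Q_k|^{-\frac1{p'}}\int_{Q_k}\bigl\|A_{Q_k}W^{-1}(y)\bigr\|\,\bigl|\vec f(y)\bigr|\,dy\Bigr\}_k\Bigr\|_{\ell^p}
\lesssim[W]_{\mathscr A_{p,q}}^{\frac1q}\bigl\|\vec f\bigr\|_{L^p}.
\]
This fails already for $d=1$. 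Take the following data:
\begin{itemize}
\item[{\rm (i)}] the weight $w(x):=|x|^{(n-\delta)/p'}$ with $\delta\in(0,1)$, so that $[w]_{A_{p,q}}\sim\delta^{-q/p'}$;
\item[{\rm (ii)}] the chain $Q_k:=[0,2^{-k})^n$, $0\le k\le N$;
\item[{\rm (iii)}] the function $f:=w^{1-p'}\mathbf 1_{Q_N}$.
\end{itemize}
Set $\sigma(Q_N):=\int_{Q_N}w^{-p'}\sim 2^{-N\delta}/\delta$. Then $\|f\|_{L^p}^p=\sigma(Q_N)$, and the $k$-th term equals $|Q_k|^{-1/p'}(\fint_{Q_k}w^q)^{1/q}\sigma(Q_N)\sim 2^{k\delta/p'}\sigma(Q_N)$. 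For $N\delta\ge1$ the left-hand side is therefore $\sim\delta^{-1}\|f\|_{L^p}\sim[w]_{A_{p,q}}^{p'/q}\|f\|_{L^p}$. This is not $\lesssim[w]_{A_{p,q}}^{1/q}\|f\|_{L^p}\sim\delta^{-1/p'}\|f\|_{L^p}$.

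Correspondingly, your Schur kernel $(|Q_l|/|Q_k|)^{1/p'}\|A_{Q_k}A_{Q_l}^{-1}\|$ is here $\sim 2^{-(l-k)\delta/p'}$, with row sums $\sim\delta^{-1}$. So it is not bounded uniformly in the weight constant, whatever use you make of Lemma \ref{lem:op-RH}. Your ``symmetric'' treatment of $l<k$ has the same defect, with $\vec f$ and $\vec g$ exchanged.

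The missing point is this. The Carleson embedding must go on the variable ranging over the \emph{full} cube, using the reducing operator of the weight on that side. The variable on the disjoint set is handled by plain H\"older and \eqref{eq:dual-reduce}, and disjointness of $\{E_k\}$ gives the $\ell^p$ or $\ell^{q'}$ sum for free.

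For $l<k$ you have $x\in Q_{l+1}\subset Q_l$ and $y\in E_l$. Factor through $A_{Q_l}$ and bound $\int_{E_l}\|A_{Q_l}W^{-1}(y)\||\vec f(y)|\,dy\lesssim[W]_{\mathscr A_{p,q}}^{1/q}|Q_l|^{1/p'}\|\vec f\mathbf 1_{E_l}\|_{L^p}$. Then apply Lemma \ref{lem:off-diagonal-Carleson} in $x$. The cost is $[W]_{\mathscr A_{p,q}}^{1-\frac{\alpha}{n}}$.

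For $k\le l$ you have $x\in E_k$ and $y\in Q_k$. Factor through $\widetilde A_{Q_k}$. On the $x$-side use $(\fint_{Q_k}\|W(x)\widetilde A_{Q_k}\|^q\,dx)^{1/q}\sim\|A_{Q_k}\widetilde A_{Q_k}\|\lesssim[W]_{\mathscr A_{p,q}}^{1/q}$. On the $y$-side use Lemma \ref{lem:off-diagonal-Carleson} for $W^{-1}\in\mathscr A_{q',p'}$, which is an $\ell^q$ sum. The cost is $[W]_{\mathscr A_{p,q}}^{\frac{p'}{q}(1-\frac{\alpha}{n})}$.

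So your fallback is the right idea, but it has to replace the main line and be used in both pieces. The two costs end up attached opposite to how you assigned them. This is exactly the paper's treatment: $\Lambda_1$ ($x\in Q$, $y\in E_Q$) is handled directly, and $\Lambda_2$ ($x\in E_Q$, $y\in Q'$) by duality.
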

\begin{proof}
By the monotone convergence theorem, it suffices to prove
\eqref{eq:chain} for finite chains with the implicit positive 
constant independent of the number of cubes.
Suppose that $N\in\mathbb Z_+$ and
\begin{align*}
\mathcal{C}=\{Q_0,Q_1,\dots,Q_N\},
\end{align*}
where $Q_N\subsetneq\cdots\subsetneq Q_1\subsetneq Q_0$.
For any $Q\in\mathcal{C}$, let $A_Q$ be a reducing operator of order $q$
for $W$. For any $j\in\{0,\dots,N\}$, let
\begin{align*}
E_{Q_j}:=
\begin{cases}
Q_N&\text{if }j=N,\\
{Q_j}\setminus Q_{j+1}&\text{otherwise}.
\end{cases}
\end{align*}
From this definition and the inclusions
$Q_N\subsetneq\cdots\subsetneq Q_1\subsetneq Q_0$, we deduce that 
the sets $\{E_Q\}_{Q\in\mathcal{C}}$ are pairwise disjoint and satisfy,
for any $Q\in\mathcal{C}$,
\begin{align}\label{eq:chain-sets}
\left(1-\frac{1}{2^n}\right)|Q|\leq|E_Q|\leq|Q|.
\end{align}
For any $Q\in\mathcal{C}\setminus\{Q_N\}$, let
$Q'$ denote its child in $\mathcal{C}$.
Then $Q\times Q$ can be decomposed into
the following pairwise disjoint sets:
\begin{align*}
Q\times Q=
(Q\times E_Q)
\cup
(E_Q\times Q')
\cup
(Q'\times Q').
\end{align*}
Note that $E_{Q_N}=Q_N$, and hence 
$Q_N\times Q_N=Q_N\times E_{Q_N}$.
Applying this decomposition to each term of
$\Lambda_{\mathcal{C}}$, we conclude that,
for any $\vec{f}\in L^p$ and $\vec{g}\in L^{q'}$,
\begin{align}\label{eq:split}
\Lambda_{\mathcal{C}}\left(\vec{f},\vec{g}\right)
&=\sum_{Q\in\mathcal{C}}|Q|^{\frac{\alpha}{n}-1}\int_Q\int_Q
\left\|W(x)W^{-1}(y)\right\|\left|\vec{f}(y)\right|
\left|\vec{g}(x)\right|\,dy\,dx\nonumber\\
&=\sum_{Q\in\mathcal{C}}|Q|^{\frac{\alpha}{n}-1}\int_Q\int_{E_Q}\cdots\,dy\,dx\nonumber\\
&\quad+\sum_{\substack{Q\in\mathcal{C}\\ Q\neq Q_N}}
|Q|^{\frac{\alpha}{n}-1}\int_{E_Q}\int_{Q'}\cdots\,dy\,dx\nonumber\\
&\quad+\sum_{\substack{Q\in\mathcal{C}\\ Q\neq Q_N}}
|Q|^{\frac{\alpha}{n}-1}\int_{Q'}\int_{Q'}\cdots\,dy\,dx\nonumber\\
&=:\Lambda_1+\Lambda_2+\Lambda_3.
\end{align}
Observe that, for any $Q\in\mathcal{C}\setminus\{Q_N\}$,
\begin{align*}
|Q|^{\frac{\alpha}{n}-1}=
\left(\frac{|Q'|}{|Q|}\right)^{1-\frac{\alpha}{n}}
|Q'|^{\frac{\alpha}{n}-1}\leq2^{\alpha-n}|Q'|^{\frac{\alpha}{n}-1},
\end{align*}
where the inequality follows from the assumption that $\alpha\in(0,n)$
and the fact that $Q'\subsetneq Q$ and both are dyadic cubes.
Using this, we obtain $\Lambda_3\le2^{\alpha-n}\Lambda_{\mathcal{C}}$,
and hence
\begin{align}\label{eq:absorb}
\left(1-2^{\alpha-n}\right)
\Lambda_{\mathcal{C}}\le\Lambda_1+\Lambda_2.
\end{align}
Thus, to prove \eqref{eq:chain}, 
it suffices to estimate $\Lambda_1$ and $\Lambda_2$ in \eqref{eq:split}.

We first estimate $\Lambda_1$.
Note that
\begin{align}\label{eq:edge-form}
\Lambda_1\leq
\sum_{Q\in\mathcal{C}}|Q|^{\frac{\alpha}{n}-1}
\int_Q\left\|W(x)A_Q^{-1}\right\|
\left|\vec g(x)\right|\,dx
\int_{E_Q}\left\|A_QW^{-1}(y)\right\|
\left|\vec f(y)\right|\,dy.
\end{align}
By H\"older's inequality and \eqref{eq:dual-reduce},
for any $Q\in\mathcal{C}$,
\begin{align}\label{eq:edge-bound}
\int_{E_Q}\left\|A_QW^{-1}(y)\right\|\left|\vec f(y)\right|\,dy
\lesssim[W]_{\mathscr A_{p,q}}^{\frac{1}{q}}|Q|^{\frac{1}{p'}}\left\|\vec f\mathbf{1}_{E_Q}\right\|_{L^p}.
\end{align}
Since $\frac{\alpha}{n}+\frac{1}{p'}=\frac{1}{q'}$,
combining \eqref{eq:edge-form} and \eqref{eq:edge-bound} yields
\begin{align*}
\Lambda_1\lesssim[W]_{\mathscr A_{p,q}}^{\frac{1}{q}}\sum_{Q\in\mathcal{C}}
\left\|\vec f\mathbf{1}_{E_Q}\right\|_{L^p}
|Q|^{-\frac1q}\int_Q\left\|W(x)A_Q^{-1}\right\|
|\vec g(x)|\,dx.
\end{align*}
From \eqref{eq:chain-sets}, we deduce that
$\mathcal{C}$ is $(1-2^{-n})$-sparse. Thus, applying H\"older's inequality
and Lemma \ref{lem:off-diagonal-Carleson}, we conclude that
\begin{align*}
\Lambda_1
&\lesssim[W]_{\mathscr A_{p,q}}^{\frac{1}{q}}
\left(\sum_{Q\in\mathcal{C}}\left\|\vec f\mathbf{1}_{E_Q}\right\|_{L^p}^p\right)^{\frac1p}
\left\{\sum_{Q\in\mathcal{C}}\left[
|Q|^{-\frac1q}\int_Q\left\|W(x)A_Q^{-1}\right\|
|\vec g(x)|\,dx\right]^{p'}\right\}^{\frac1{p'}}\\
&\lesssim[W]_{\mathscr A_{p,q}}^{\frac{1}{q}+\frac{1}{p'}}
\left(\sum_{Q\in\mathcal{C}}\left\|\vec f\mathbf{1}_{E_Q}
\right\|_{L^p}^p\right)^{\frac1p}
\|\vec g\|_{L^{q'}}.
\end{align*}
This, together with the fact that $\{E_Q\}_{Q\in\mathcal{C}}$
are pairwise disjoint, further implies that
\begin{align}\label{eq:edge-one}
\Lambda_1\lesssim[W]_{\mathscr A_{p,q}}^{\frac{1}{q}+\frac{1}{p'}}
\left\|\vec f\right\|_{L^p}\|\vec g\|_{L^{q'}},
\end{align}
which is the desired estimate for $\Lambda_1$.

We next estimate $\Lambda_2$ by duality.
To this end, let $V:=W^{-1}$. Note that, for any
$Q\in\mathcal{C}\setminus\{Q_N\}$, $Q'\subset Q$.
Thus, it follows from Tonelli's theorem and 
Lemma \ref{lem-norm-AB=BA} that
\begin{align*}
\Lambda_2&\leq
\sum_{\substack{Q\in\mathcal{C}\\ Q\neq Q_N}}
|Q|^{\frac{\alpha}{n}-1}\int_{E_Q}\int_Q
\left\|W(x)W^{-1}(y)\right\|\left|\vec{f}(y)\right|
\left|\vec{g}(x)\right|\,dy\,dx\\
&=\sum_{\substack{Q\in\mathcal{C}\\ Q\neq Q_N}}
|Q|^{\frac{\alpha}{n}-1}\int_Q\int_{E_Q}
\left\|V(x)V^{-1}(y)\right\|\left|\vec{g}(y)\right|
\left|\vec{f}(x)\right|\,dy\,dx.
\end{align*}
By Remark \ref{rmk-dual}, we find that
$V\in\mathscr A_{q',p'}$ and
\begin{align*}
[V]_{\mathscr A_{q',p'}}^{\frac{1}{p'}}
\sim[W]_{\mathscr A_{p,q}}^{\frac{1}{q}}.
\end{align*}
Observe that
$\frac{1}{q'}-\frac{1}{p'}=\frac{\alpha}{n}$.
Using these observations and applying \eqref{eq:edge-one} with
$(p,q,W,\vec{f},\vec{g})$ replaced by
$(q',p',V,\vec{g},\vec{f})$, we obtain
\begin{align}\label{eq:edge-two}
\Lambda_2
\lesssim [V]_{\mathscr A_{q',p'}}^{
\frac{1}{p'}+\frac{1}{q}}
\left\|\vec{f}\right\|_{L^p}
\left\|\vec{g}\right\|_{L^{q'}}
\lesssim[W]_{\mathscr A_{p,q}}^{\frac{p'}{q}
(\frac{1}{p'}+\frac{1}{q})}
\left\|\vec{f}\right\|_{L^p}
\left\|\vec{g}\right\|_{L^{q'}}.
\end{align}
Recall that $[W]_{\mathscr A_{p,q}}\geq 1$ [see \eqref{eq:Apq-lower}].
Combining \eqref{eq:absorb}, \eqref{eq:edge-one}, and
\eqref{eq:edge-two}, and using
$1-\frac{\alpha}{n}=\frac{1}{p'}+\frac{1}{q}$, we conclude that
\begin{align*}
\Lambda_{\mathcal{C}}\left(\vec{f},\vec{g}\right)\lesssim
[W]_{\mathscr A_{p,q}}^{(1-\frac{\alpha}{n})
\max\{1,\frac{p'}{q}\}}
\left\|\vec{f}\right\|_{L^p}
\left\|\vec{g}\right\|_{L^{q'}}.
\end{align*}
Thus, \eqref{eq:chain} holds. 
This completes the proof of Lemma \ref{lem-simple-chain}.
\end{proof}

Based on Lemmas \ref{lem-simple-chain} and 
\ref{lem:chain-packing}, we now prove Theorem \ref{thm:dyadic-form}.

\begin{proof}[Proof of Theorem \ref{thm:dyadic-form}]
For $N\in\mathbb N$, let $\mathcal{D}_N:=
\{Q\in\mathcal{D}:\,\ell(Q)\leq 2^N\}$.
Fix $\vec{f}\in L^p$ and $\vec{g}\in L^{q'}$.
For any $N\in\mathbb N$,
applying Lemma \ref{lem:chain-packing} to
$\vec{f}$, $\vec{g}$, and the family
$\mathcal{D}_N$, we obtain a family
$\mathscr R_N$ of cubes in $\mathcal{D}_N$ such that
\begin{align*}
\mathcal{D}_N=\bigcup_{R\in\mathscr R_N}\mathcal{C}_R,
\end{align*}
where the chains $\mathcal{C}_R$ are pairwise disjoint as collections
of cubes and each has root $R$.
Since, for any $R\in\mathscr R_N$,
every cube in $\mathcal{C}_R$ is contained in $R$, it follows that
\begin{align*}
\Lambda_{\mathcal{C}_R}\left(\vec{f},\vec{g}\right)
=\Lambda_{\mathcal{C}_R}\left(\vec{f}\mathbf{1}_R,\vec{g}\mathbf{1}_R\right).
\end{align*}
Using this identity, Lemma \ref{lem-simple-chain}, and
\eqref{eq:packing}, we conclude that, for any $N\in\mathbb N$,
\begin{align*}
\Lambda_{\mathcal{D}_N}\left(\vec{f},\vec{g}\right)
&=\sum_{R\in\mathscr R_N}
\Lambda_{\mathcal{C}_R}\left(\vec{f},\vec{g}\right)\\
&=\sum_{R\in\mathscr R_N}
\Lambda_{\mathcal{C}_R}\left(\vec{f}\mathbf{1}_R,\vec{g}\mathbf{1}_R\right)\\
&\lesssim[W]_{\mathscr A_{p,q}}^{(1-\frac{\alpha}{n})\max\{1,\frac{p'}{q}\}}
\sum_{R\in\mathscr R_N}
\left\|\vec{f}\mathbf{1}_R\right\|_{L^p}
\left\|\vec{g}\mathbf{1}_R\right\|_{L^{q'}}\\
&\lesssim[W]_{\mathscr A_{p,q}}^{(1-\frac{\alpha}{n})\max\{1,\frac{p'}{q}\}}
\left\|\vec{f}\right\|_{L^p}
\left\|\vec{g}\right\|_{L^{q'}},
\end{align*}
where the implicit positive constants are
independent of $N$. Finally, letting $N\to\infty$ and using
the monotone convergence theorem, we obtain the
desired estimate for $\mathcal{D}$.
This completes the proof of Theorem \ref{thm:dyadic-form}.
\end{proof}

Finally, we prove Theorem \ref{thm:main}.

\begin{proof}[Proof of Theorem \ref{thm:main}]
We first prove the estimate for $\vec f\in L_{\rm c}^\infty$.
By \eqref{eq:dyadic} and Theorem \ref{thm:dyadic-form}, for any
$\vec g\in L_{\rm c}^\infty$,
\begin{align*}
\left|\int_{\mathbb R^n}
\left\langle W(x)I_\alpha\vec f(x),\vec g(x)\right\rangle\,dx\right|
\lesssim
[W]_{\mathscr A_{p,q}}^{(1-\frac{\alpha}{n})\max\{1,\frac{p'}{q}\}}
\|W\vec f\|_{L^p}\|\vec g\|_{L^{q'}}.
\end{align*}
Since $L_{\rm c}^\infty$ is dense in $L^{q'}$, duality yields
$I_\alpha \vec f\in L^q(W^q)$ and
\begin{align}\label{eq:bounded}
\left\|I_\alpha \vec{f}\right\|_{L^q(W^q)}\lesssim
[W]_{\mathscr A_{p,q}}^{(1-\frac{\alpha}{n})\max\{1,\frac{p'}{q}\}}
\left\|\vec{f}\right\|_{L^p(W^p)}.
\end{align}

Next, we show that \eqref{eq:bounded} holds
for general $\vec f\in L^p(W^p)$ and hence 
\eqref{eq:main} holds. To this end, we 
first prove that, for almost every $x\in\mathbb R^n$,
\begin{align}\label{eq:absolute}
\int_{\mathbb{R}^n}
\frac{|\vec{f}(y)|}{|x-y|^{n-\alpha}} \,dy<\infty,
\end{align}
and hence $I_\alpha \vec{f}$ is well defined.
For any $m\in\mathbb{N}$, let
$\vec{f}_m:=\vec{f}\mathbf{1}_{E_m}$, where
\begin{align*}
E_m:=\left\{x\in\mathbb{R}^n:\,|x|<m\right\}\cap\left\{
x\in\mathbb{R}^n:\,\left|\vec{f}(x)\right|\leq m\right\}.
\end{align*}
Note that, for any $m\in\mathbb{N}$,
$\vec{f}_m\in L_{\rm c}^\infty$.
For any $j\in\{1,\dots,d\}$, let $f^{(j)}$ denote the
$j$-th component function of $\vec f$ and, for any
$x\in\mathbb{R}^n$, let
\begin{align*}
\sigma_j(x):=
\begin{cases}
\frac{\overline{f^{(j)}(x)}}{|f^{(j)}(x)|}&\text{if }f^{(j)}(x)\neq0,\\
0&\text{if }f^{(j)}(x)=0.
\end{cases}
\end{align*}
For any $m\in\mathbb{N}$ and $j\in\{1,\dots,d\}$,
let $\vec{g}_{j,m}:=\sigma_j \vec{f}_m\in L_{\rm c}^\infty$.
Since $\sigma_j$ is scalar-valued, it follows that
\begin{align*}
\left|W\vec{g}_{j,m}\right|=
|\sigma_j|\,\left|W\vec{f}_m\right|
\leq\left|W\vec{f}\right|.
\end{align*}
Thus, \eqref{eq:bounded} yields
\begin{align*}
\|I_\alpha \vec{g}_{j,m}\|_{L^q(W^q)}\lesssim
[W]_{\mathscr A_{p,q}}^{(1-\frac{\alpha}{n})\max\{1,\frac{p'}{q}\}}
\left\|\vec{f}\right\|_{L^p(W^p)},
\end{align*}
where the implicit positive constant is independent of
$j$ and $m$. Observe that, for any $j\in\{1,\dots,d\}$, 
$m\in\mathbb{N}$, and $x\in\mathbb{R}^n$,
\begin{align*}
A_{j,m}(x):=\int_{\mathbb{R}^n}
\frac{|f^{(j)}(y)|\mathbf{1}_{E_m}(y)}{|x-y|^{n-\alpha}}\,dy
=\left[I_\alpha \vec{g}_{j,m}\right]^{(j)}(x),
\end{align*}
where $[I_\alpha \vec{g}_{j,m}]^{(j)}$ denotes the
$j$-th component function of $I_\alpha \vec{g}_{j,m}$.
Let $e_j:=(0,\dots,0,1,0,\dots,0)^T\in \mathbb C^d$ be the 
vector with 1 in the $j$-th entry and 0 elsewhere.
By the definition of matrix weights, we find that,
for almost every $x\in\mathbb{R}^n$, $W(x)$ is positive definite,
and hence, for such $x$, 
\begin{align*}
A_{j,m}(x)\leq\left|W^{-1}(x)e_j\right|
\left|W(x)I_\alpha \vec{g}_{j,m}(x)\right|.
\end{align*}
Applying \eqref{eq:bounded}, we conclude that,
for any $j\in\{1,\dots,d\}$ and $m\in\mathbb{N}$,
\begin{align}\label{eq:coord-bound}
\int_{\mathbb{R}^n}A_{j,m}(x)^q\left|W^{-1}(x)e_j\right|^{-q}\,dx
\lesssim[W]_{\mathscr A_{p,q}}^{(1-\frac{\alpha}{n})
\max\{p',q\}}\left\|\vec{f}\right\|_{L^p(W^p)}^q.
\end{align}
Observe that, for any $x\in\mathbb{R}^n$, $A_{j,m}(x)$ increases pointwise to
\begin{align*}
A_j(x):=\int_{\mathbb{R}^n}\frac{|f^{(j)}(y)|}{|x-y|^{n-\alpha}}\,dy
\end{align*}
as $m\to\infty$. From the monotone convergence theorem 
and \eqref{eq:coord-bound},
we infer that, for any $j\in\{1,\dots,d\}$,
\begin{align*}
\int_{\mathbb{R}^n}A_j(x)^q\left|W^{-1}(x)e_j\right|^{-q}\,dx<\infty.
\end{align*}
By this and the fact that $0<|W^{-1}(x)e_j|<\infty$ for almost every $x\in\mathbb{R}^n$,
we find that $A_j(x)<\infty$ for almost every $x\in\mathbb{R}^n$.
Summing over $j\in\{1,\dots,d\}$, we conclude that,
for almost every $x\in\mathbb R^n$,
\begin{align*}
\int_{\mathbb{R}^n}\frac{|\vec{f}(y)|}{|x-y|^{n-\alpha}}\,dy
\leq\sum_{j=1}^d A_j(x)<\infty,
\end{align*}
and hence \eqref{eq:absolute} holds. 
From \eqref{eq:absolute} and 
the Lebesgue dominated convergence theorem,
we deduce that, for almost every $x\in\mathbb{R}^n$,
\begin{align*}
I_\alpha \vec{f}_m(x)\to
\int_{\mathbb{R}^n}\frac{\vec{f}(y)}{|x-y|^{n-\alpha}}\,dy=I_\alpha \vec{f}(x)
\end{align*}
and hence
\begin{align*}
\left|W(x)I_\alpha \vec{f}_m(x)\right|
\to\left|W(x)I_\alpha \vec{f}(x)\right|
\end{align*}
as $m\to\infty$. Applying Fatou's lemma and \eqref{eq:bounded},
we conclude that \eqref{eq:main} holds.

Finally, we prove the optimality of the exponent of the weight constant
in \eqref{eq:main}. Taking $W:=wI_d$ 
for $w\in A_{p,q}$ and a vector-valued function with only one non-zero
component, we find that the estimate \eqref{eq:main} reduces to the scalar
estimate \eqref{eq:scalar}.
Thus the optimality of the exponent of the weight constant in \eqref{eq:main} follows
from that of \eqref{eq:scalar}. This completes the proof of Theorem \ref{thm:main}.
\end{proof}

\begin{remark}
Note that the proof of Theorem \ref{thm:dyadic-form} relies on the assumption
$p<q$. This is because, in the proof of 
Lemma \ref{lem:chain-packing}, we use
$\Gamma=\frac{1}{p}+\frac{1}{q'}
=1+\frac{\alpha}{n}>1$, 
and hence $2^{1-\Gamma}<1$.
When $p=q$, we have $\Gamma=1$, and therefore
$2^{1-\Gamma}=1$. Thus, the chain-packing argument in
Lemma \ref{lem:chain-packing} is no longer valid,
and the present proof does not cover the diagonal case
$p=q$. In particular, this proof of Theorem \ref{thm:main}
cannot be used to establish matrix-weighted estimates
for Calder\'on--Zygmund operators.
\end{remark}

Finally, we note that the proof of Theorem \ref{thm:main} also holds
for the following fractional kernel operators.
For any $\alpha\in(0,n)$, a linear
operator $T_\alpha$ is called a \emph{fractional kernel operator}
if there exists a complex-valued measurable function $K_\alpha$ on
$\{(x,y)\in\mathbb{R}^n\times\mathbb{R}^n:\ x\neq y\}$
and a positive constant $C$ such that,
for any $x,y\in\mathbb{R}^n$ with $x\neq y$,
\begin{align}\label{eq:fsi-kernel-size}
|K_\alpha(x,y)|\leq\frac{C}{|x-y|^{n-\alpha}},
\end{align}
and, for any $\vec f\in L_{\rm c}^\infty$ and $x\in\mathbb{R}^n$,
\begin{align}\label{eq:fsi-kernel-representation}
T_\alpha\vec f(x)=\int_{\mathbb R^n}K_\alpha(x,y)\vec f(y)\,dy.
\end{align}
By examining the proof of Theorem \ref{thm:main},
we find that the same argument is also valid for 
fractional kernel operators.
More precisely, we have the following corollary.

\begin{corollary}\label{cor:fsi}
Let $\alpha$, $p$, and $q$ satisfy \eqref{eq:range}, and let
$T_\alpha$ be a fractional kernel operator satisfying
\eqref{eq:fsi-kernel-representation} and \eqref{eq:fsi-kernel-size}.
Then, for any $W\in\mathscr A_{p,q}$ and
$\vec f\in L_{\rm c}^\infty$,
\begin{align*}
\|T_\alpha\vec f\|_{L^q(W^q)}\lesssim
[W]_{\mathscr A_{p,q}}^{(1-\frac{\alpha}{n})\max\{1,\frac{p'}{q}\}}
\|\vec f\|_{L^p(W^p)},
\end{align*}
where the implicit positive constant is independent of 
$W$ and $\vec{f}$. Consequently, $T_\alpha$ extends uniquely
to a bounded linear operator from $L^p(W^p)$ to $L^q(W^q)$.
\end{corollary}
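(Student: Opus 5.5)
The plan is to run the proof of Theorem \ref{thm:main} again with $|K_\alpha|$ in place of the Riesz kernel. The whole argument only used the size of the kernel.

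Fix $\vec f,\vec g\in L_{\rm c}^\infty$. By \eqref{eq:fsi-kernel-representation}, the size bound \eqref{eq:fsi-kernel-size}, and the identity $\vec f=W^{-1}W\vec f$, we get
\begin{align*}
\left|\int_{\mathbb R^n}\left\langle W(x)T_\alpha\vec f(x),\vec g(x)\right\rangle dx\right|
\le C\int_{\mathbb R^n}\int_{\mathbb R^n}
\frac{\|W(x)W^{-1}(y)\|\,|W(y)\vec f(y)|\,|\vec g(x)|}{|x-y|^{n-\alpha}}\,dy\,dx.
\end{align*}
The diagonal $\{x=y\}$ has measure zero, so it is harmless. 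The right-hand side is exactly the quantity bounded in \eqref{eq:dyadic}. Lemma \ref{lem:kernel-dyadic} then gives the bound $C\sum_{\tau}\Lambda_{\mathcal D^\tau}(W\vec f,\vec g)$.

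Next apply Theorem \ref{thm:dyadic-form} to each of the $3^n$ grids, with $W\vec f\in L^p$ and $\vec g\in L^{q'}$. This bounds the pairing by
\begin{align*}
[W]_{\mathscr A_{p,q}}^{(1-\frac{\alpha}{n})\max\{1,\frac{p'}{q}\}}\|\vec f\|_{L^p(W^p)}\|\vec g\|_{L^{q'}}.
\end{align*}
Before using duality, I would check that $T_\alpha\vec f$ is measurable and finite almost everywhere, so that the pairing makes sense. Both follow from $\int|\vec f(y)||x-y|^{\alpha-n}\,dy<\infty$, which holds pointwise for bounded, compactly supported $\vec f$. This is easier than the general-$\vec f$ step in the proof of Theorem \ref{thm:main}. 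Since $L_{\rm c}^\infty$ is dense in $L^{q'}$, duality gives $W T_\alpha\vec f\in L^q$ with the stated norm bound.

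For the extension, I would use that $L_{\rm c}^\infty$ is dense in $L^p(W^p)$. This holds because $\vec f\mathbf 1_{E_m}$, with the sets $E_m$ from the proof of Theorem \ref{thm:main}, converges to $\vec f$ in $L^p(W^p)$ by dominated convergence, since $|W(\vec f-\vec f\mathbf 1_{E_m})|\le|W\vec f|$. The operator $T_\alpha$ is linear and bounded on this dense subspace, so it extends uniquely by continuity to a bounded operator from $L^p(W^p)$ to $L^q(W^q)$, with the same constant.

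No step should be a real obstacle. The only points needing care are the a priori finiteness of $T_\alpha\vec f$ and the density of $L_{\rm c}^\infty$ in $L^p(W^p)$; the quantitative content all comes from Theorem \ref{thm:dyadic-form}.
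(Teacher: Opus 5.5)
Your proposal is correct and follows the paper's route. The paper proves the corollary simply by observing that the argument for Theorem \ref{thm:main} uses only the size bound \eqref{eq:fsi-kernel-size}: the reduction \eqref{eq:dyadic} via Lemma \ref{lem:kernel-dyadic}, then Theorem \ref{thm:dyadic-form} on each grid, then duality. This is exactly what you carry out. Your density argument with $\vec f\mathbf 1_{E_m}$ for the unique bounded extension fills in a step the paper leaves implicit, and it is fine.
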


\section{Proof of Theorem \ref{thm:matrix-weighted-Sobolev}}\label{sec:sobolev}

In this section, we prove Theorem \ref{thm:matrix-weighted-Sobolev}.
To this end, we construct a sparse family and obtain a
sparse domination for $|W\vec f|$, whose coefficients involve
the local $L^p$-norms of $WD\vec f$ over pairwise disjoint sets.
Using this domination and the off-diagonal Carleson-type embedding
in Lemma \ref{lem:off-diagonal-Carleson}, we finally prove 
Theorem \ref{thm:matrix-weighted-Sobolev}.

We start with two basic lemmas. The following
properties of level sets follow from the continuity
of the Lebesgue measure; we omit the details.
\begin{lemma}\label{lem:Sobolev-quantile}
Let $Q$ be a cube in $\mathbb R^n$.
Suppose that $v$ is a non-negative
measurable function on $Q$ that is finite almost everywhere.
Let $\delta\in(0,1)$ and let
\begin{align*}
\gamma:=\inf\left\{\lambda\in[0,\infty):\,
\left|\{x\in Q:\,v(x)>\lambda\}\right|\leq \delta|Q|\right\}.
\end{align*}
Then
\begin{align*}
\left|\{x\in Q:\,v(x)>\gamma\}\right|
\le\delta|Q|\leq\left|\{x\in Q:\,v(x)\ge\gamma\}\right|.
\end{align*}
\end{lemma}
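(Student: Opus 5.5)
The plan is to deduce both inequalities directly from the continuity of measure applied to the distribution function $\phi(\lambda):=|\{x\in Q:\,v(x)>\lambda\}|$. This function is non-increasing on $[0,\infty)$, bounded by $|Q|<\infty$, and tends to $|\{x\in Q:\,v(x)=\infty\}|=0$ as $\lambda\to\infty$, because $v$ is finite almost everywhere. So the set $\Lambda:=\{\lambda\ge0:\,\phi(\lambda)\le\delta|Q|\}$ is nonempty, and since $\phi$ is non-increasing it is an up-interval. Its infimum $\gamma$ is therefore finite, and $\Lambda\supset(\gamma,\infty)$.

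For the first inequality, take $\lambda_k\downarrow\gamma$ with $\lambda_k\in\Lambda$; if $\gamma\in\Lambda$ there is nothing to prove. The sets $\{v>\lambda_k\}\cap Q$ increase to $\{v>\gamma\}\cap Q$. Continuity from below of Lebesgue measure then gives $\phi(\gamma)=\lim_k\phi(\lambda_k)\le\delta|Q|$.

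For the second inequality, I would split into two cases.
\begin{itemize}
\item If $\gamma=0$, then $\{x\in Q:\,v(x)\ge0\}=Q$ because $v$ is non-negative. Hence its measure is $|Q|\ge\delta|Q|$, using $\delta<1$.
\item If $\gamma>0$, take $\mu_k\uparrow\gamma$ with $\mu_k\in[0,\gamma)$. Then $\mu_k\notin\Lambda$ by the definition of the infimum, so $\phi(\mu_k)>\delta|Q|$. The sets $\{v>\mu_k\}\cap Q$ decrease to $\{v\ge\gamma\}\cap Q$. Since $|Q|<\infty$, continuity from above applies and yields $|\{x\in Q:\,v(x)\ge\gamma\}|=\lim_k\phi(\mu_k)\ge\delta|Q|$.
\end{itemize}

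The only delicate point is justifying that $\gamma<\infty$, and this is exactly where the finiteness of $v$ almost everywhere is used. Everything else is routine monotone limits of sets.
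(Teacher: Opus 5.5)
Your proof is correct and takes the same approach as the paper, which omits the details and says only that the lemma follows from the continuity of Lebesgue measure. Your argument supplies exactly those details: continuity from below gives the bound for $\{v>\gamma\}$; continuity from above, using $|Q|<\infty$, gives the bound for $\{v\ge\gamma\}$; and $v<\infty$ almost everywhere ensures $\gamma<\infty$.
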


Here and below, convexity in $\mathbb C^d$ is understood with respect to its
underlying real vector space. Suppose that $\mathcal K\subset\mathbb C^d$ is 
a nonempty closed convex set. For any $\vec{z}\in\mathbb C^d$, let
\begin{align*}
d(\vec{z},\mathcal K):=\inf_{\vec{y}\in\mathcal K}|\vec{z}-\vec{y}|
\end{align*}
be the \emph{distance from $\vec{z}$ to $\mathcal{K}$}.

Given a continuously differentiable vector-valued function 
and a nonempty closed convex set, the next lemma constructs
a continuously differentiable scalar 
function whose values depend on the distance between 
the values of the given vector-valued function and the convex set.

\begin{lemma}\label{lem:Sobolev-smooth-truncation}
Suppose that $\mathcal K$ is a nonempty closed convex subset of $\mathbb C^d$.
Let $\vec{g}$ be a continuously differentiable function
from $\mathbb R^n$ to $\mathbb C^d$, and let $\gamma\in(0,\infty)$.
Then there exists a real-valued continuously differentiable 
function $\psi$ on $\mathbb R^n$ such that, for any $x\in\mathbb R^n$,
\begin{align}\label{eq:trunc-values}
\begin{cases}
\psi(x)=0
&\text{if }\vec{g}(x)\in \mathcal{K},\\
\psi(x)\in(0,\gamma)
&\text{if }d(\vec{g}(x),\mathcal{K})\in(0,\gamma),\\
\psi(x)=\gamma&\text{if }d(\vec{g}(x),\mathcal{K})\geq\gamma,
\end{cases}
\end{align}
and
\begin{align}\label{eq:trunc-grad}
|\nabla\psi(x)|\lesssim \|D\vec{g}(x)\|\mathbf{1}_{\Delta}(x),
\end{align}
where
\begin{align*}
\Delta:=\left\{x\in\mathbb R^n:\,d(\vec{g}(x),\mathcal{K})\in(0,\gamma)\right\}
\end{align*}
and the implicit positive constant is independent of 
$\mathcal{K}$, $\vec{g}$, $\gamma$, $\psi$, and $x$.
\end{lemma}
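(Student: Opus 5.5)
The plan is to compose a smooth cutoff with the distance function to $\mathcal K$. Let $\delta(\vec z):=d(\vec z,\mathcal K)$ for $\vec z\in\mathbb C^d\cong\mathbb R^{2d}$. Since $\mathcal K$ is nonempty, closed and convex, the metric projection $P_{\mathcal K}$ is well defined and $1$-Lipschitz. The function $\delta$ is $1$-Lipschitz, and on $\mathbb C^d\setminus\mathcal K$ it is continuously differentiable with real gradient $\nabla\delta(\vec z)=(\vec z-P_{\mathcal K}\vec z)/|\vec z-P_{\mathcal K}\vec z|$. This is the classical fact that $\delta^2/2$ is $C^1$ on all of $\mathbb R^{2d}$ with gradient $\vec z-P_{\mathcal K}\vec z$, which is continuous because $P_{\mathcal K}$ is Lipschitz.

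Next, fix once and for all a $C^1$ function $h:\mathbb R\to[0,1]$ that is strictly increasing on $(0,1)$, with $h=0$ on $(-\infty,0]$, $h=1$ on $[1,\infty)$, and $h'(0)=h'(1)=0$. For example, take $h(t)=3t^2-2t^3$ on $[0,1]$. Set
\begin{align*}
\psi(x):=\gamma\,h\left(\frac{\delta(\vec g(x))}{\gamma}\right).
\end{align*}
The value conditions \eqref{eq:trunc-values} follow immediately from the properties of $h$.

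For regularity, I will argue on three regions. On the open set where $\vec g(x)\notin\mathcal K$, $\psi$ is a composition of $C^1$ maps. The chain rule gives
\begin{align*}
\nabla\psi(x)=h'\left(\frac{\delta(\vec g(x))}{\gamma}\right)\,\left[D_{\mathbb R}\vec g(x)\right]^{T}\nabla\delta(\vec g(x)),
\end{align*}
so $|\nabla\psi(x)|\le\|h'\|_\infty\|D\vec g(x)\|$. Here I use that the real Jacobian of $\vec g$, viewed as a map into $\mathbb R^{2d}$, has norm comparable to $\|D\vec g(x)\|$ with absolute constants: each row of the real Jacobian is the real or imaginary part of a row of $D\vec g$. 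Moreover $h'(\delta/\gamma)=0$ unless $\delta(\vec g(x))\in(0,\gamma)$, which yields the factor $\mathbf 1_\Delta$.

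At points with $\vec g(x_0)\in\mathcal K$, I will write $\psi=\gamma\tilde h(\delta^2/\gamma^2)$ with $\tilde h(s):=h(\sqrt{s})$ for $s\ge0$. Since $\delta^2$ is $C^1$ on $\mathbb R^{2d}$, it suffices to check that $\tilde h$ is $C^1$ near $0$. This holds because $h(t)=O(t^2)$ near $0$: indeed $\tilde h(s)=3s-2s^{3/2}$ near $0$, which is $C^1$ on $[0,\infty)$, and it extends by $0$ to $s<0$. The extension is not $C^1$ at $0$ (its right derivative there is $3$), but this does not matter because $\delta^2$ never takes negative values. The gradient at such $x_0$ is $\gamma\tilde h'(0)\cdot\nabla(\delta^2)(\vec g(x_0))/\gamma^2=0$, since $\nabla(\delta^2)$ vanishes on $\mathcal K$. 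So $\nabla\psi$ is continuous there and equals $0$, which is consistent with $\mathbf 1_\Delta(x_0)=0$.

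The main obstacle is exactly this regularity across the boundary region $\{\vec g\in\mathcal K\}$ and the set $\{\delta=\gamma\}$, where $\delta$ itself is not differentiable. Near $\{\delta=\gamma\}$ the level is away from $\mathcal K$, so $\delta$ is $C^1$ there, and $h'(1)=0$ makes $\nabla\psi$ vanish continuously. All constants involve only $\|h'\|_\infty$ and the real/complex Jacobian comparison, so they are independent of $\mathcal K$, $\vec g$ and $\gamma$.
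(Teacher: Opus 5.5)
Your proof is correct and follows essentially the same route as the paper: both compose the $C^1$ smoothstep $3t^2-2t^3$ with the distance to $\mathcal K$, rely on the $C^1$ regularity of $d(\cdot,\mathcal K)^2$ coming from the metric projection, and handle $\mathbb C^d$ by realification. The one difference is that the paper applies the smoothstep to $d(\vec g(x),\mathcal K)^2/\gamma^2$ rather than to $d(\vec g(x),\mathcal K)/\gamma$. As a result, $C^1$ regularity on all of $\mathbb R^n$ follows at once from the chain rule, and your separate check at $\{\vec g\in\mathcal K\}$ via $\tilde h(s)=3s-2s^{3/2}$ is not needed; that check is cleanest if you extend $\tilde h$ linearly as $3s$ to $s<0$, rather than by $0$. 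The price is that the paper's gradient bound uses $d<\gamma$ on $\Delta$, where yours uses $|\nabla d|=1$.
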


\begin{proof}
We first prove this lemma when $\mathcal K$ is a nonempty closed convex subset 
of $\mathbb R^d$ and $\vec{g}$ is a continuously differentiable function
from $\mathbb R^n$ to $\mathbb R^d$.
To this end, for any $\vec{z}\in \mathbb{R}^d$, 
let $v(\vec{z}):=[d(\vec{z},\mathcal{K})]^2$.
By \cite[Theorem 1.5.5]{fp03}, we find that, for any $\vec{z}\in\mathbb R^d$,
there exists a unique point $P_{\mathcal K}(\vec{z})\in\mathcal K$ such that
$|\vec{z}-P_{\mathcal K}(\vec{z})|=d(\vec{z},\mathcal K)$, and 
$v$ is continuously differentiable on $\mathbb R^d$.
Moreover, for any $\vec{z}\in\mathbb R^d$,
\begin{align}\label{eq:dist-grad}
\nabla v(\vec{z})=2\left[\vec{z}-P_{\mathcal K}(\vec{z})\right].
\end{align}
For any $x\in\mathbb R^n$, let $\rho(x):=v(\vec{g}(x))$.
Using \eqref{eq:dist-grad} and the chain rule,
we conclude that, for any $x\in\mathbb R^n$,
\begin{align*}
\nabla\rho(x)
=2[D\vec{g}(x)]^T\left[\vec{g}(x)-P_{\mathcal K}(\vec{g}(x))\right],
\end{align*}
where $[D\vec{g}(x)]^T$ is the transpose matrix of $D\vec{g}(x)$. 
Hence,
\begin{align}\label{eq:rho-grad}
|\nabla\rho(x)|&\leq2\|D\vec{g}(x)\|
\left|\vec{g}(x)-P_{\mathcal K}(\vec{g}(x))\right|\nonumber\\
&=2\|D\vec{g}(x)\|d(\vec{g}(x),\mathcal{K}).
\end{align}
Let $\eta$ be the function on $\mathbb{R}$ defined by setting,
for any $s\in\mathbb{R}$,
\begin{align*}
\eta(s):=
\begin{cases}
0&\text{if }s\le0,\\
3s^2-2s^3&\text{if }0<s<1,\\
1&\text{if }s\ge1.
\end{cases}
\end{align*}
Observe that $\eta$ is continuously differentiable.
Moreover, for any $x\in\mathbb R^n$, let
$\psi(x):=\gamma\eta(\frac{\rho(x)}{\gamma^2})$.
It follows from the definition of $\psi$ that
\eqref{eq:trunc-values} holds.
For any $x\in\mathbb R^n$, the chain rule and 
\eqref{eq:rho-grad} yield 
\begin{align}\label{eq:psi-deriv}
|\nabla\psi(x)|&\le2\left|\eta'\!\left(\frac{\rho(x)}{\gamma^2}\right)
\right|\frac{\|D\vec{g}(x)\|d(\vec{g}(x),\mathcal{K})}{\gamma}.
\end{align}
Note that, for any $s\in\mathbb R\setminus(0,1)$,
$\eta'(s)=0$. Thus, for any $x\in\mathbb{R}^n\setminus \Delta$,
$\frac{\rho(x)}{\gamma^2}=0$ or $\frac{\rho(x)}{\gamma^2}\geq1$,
and hence $|\nabla\psi(x)|=0$.
On the other hand, for any $x\in\Delta$,
$d(\vec g(x),\mathcal K)<\gamma$, which, together with \eqref{eq:psi-deriv}, implies that
\begin{align*}
|\nabla\psi(x)|\le2\sup_{t\in\mathbb{R}}|\eta'(t)|\,\|D\vec{g}(x)\|.
\end{align*}
Therefore, $\psi$ satisfies \eqref{eq:trunc-grad}.
This completes the proof of this lemma
when $\mathcal K$ is a nonempty closed convex subset 
of $\mathbb R^d$ and $\vec{g}$ is a continuously differentiable function
from $\mathbb R^n$ to $\mathbb R^d$.

We finally prove this lemma in the complex case. 
Suppose that $\mathcal K$ is a
nonempty closed convex subset of $\mathbb C^d$ and
$\vec{g}$ is a continuously differentiable function
from $\mathbb R^n$ to $\mathbb C^d$.
For any $\vec z:=(z_1,\dots,z_d)\in\mathbb C^d$, let
\begin{align*}
\Phi(\vec z):=
({\rm Re}\,z_1,\dots,{\rm Re}\,z_d,
{\rm Im}\,z_1,\dots,{\rm Im}\,z_d),
\end{align*}
where, for any $z\in\mathbb C$, ${\rm Re}\,z$ and ${\rm Im}\,z$
denote the real and imaginary parts of $z$, respectively.
From the definition of $\Phi$, we deduce that
$\widetilde{\mathcal K}:=\{\Phi(\vec z):\vec z\in\mathcal K\}$
is a nonempty closed convex subset of $\mathbb R^{2d}$,
$\Phi\circ\vec g$ is a continuously differentiable function
from $\mathbb R^n$ to $\mathbb R^{2d}$, and,
for any $x\in\mathbb R^n$,
\begin{align*}
d(\Phi(\vec g(x)),\widetilde{\mathcal K})=
d(\vec g(x),\mathcal K).
\end{align*}
Applying the preceding argument to
$\widetilde{\mathcal K}$ and $\Phi\circ\vec g$,
we obtain a real-valued continuously differentiable 
function $\psi$ on $\mathbb R^n$ satisfying \eqref{eq:trunc-values}.
To verify \eqref{eq:trunc-grad}, it remains to show that, for any
$x\in\mathbb R^n$,
\begin{align}\label{eq:realification-derivative}
\|D(\Phi\circ\vec g)(x)\|\leq
\|D\vec g(x)\|.
\end{align}
Since $D(\Phi\circ\vec g)(x)$ is a real matrix for
any $x\in\mathbb{R}^n$, it follows that,
for any $\xi:=\xi_1+i\xi_2\in\mathbb C^n$ with
$\xi_1,\xi_2\in\mathbb R^n$,
\begin{align*}
|D(\Phi\circ\vec g)(x)\xi|^2
&=|D(\Phi\circ\vec g)(x)\xi_1|^2
+|D(\Phi\circ\vec g)(x)\xi_2|^2\\
&=|D\vec g(x)\xi_1|^2+|D\vec g(x)\xi_2|^2\\
&\leq\|D\vec g(x)\|^2\left(|\xi_1|^2+|\xi_2|^2\right)\\
&=\|D\vec g(x)\|^2|\xi|^2,
\end{align*}
and hence \eqref{eq:realification-derivative} holds.
Therefore, we obtain the desired function $\psi$
in the complex case. This completes the proof of 
Lemma \ref{lem:Sobolev-smooth-truncation}.
\end{proof}

We now prove Theorem \ref{thm:matrix-weighted-Sobolev}.

\begin{proof}[Proof of Theorem \ref{thm:matrix-weighted-Sobolev}]
Let $\vec{f}\in C_{\rm c}^\infty$.
We begin with the stopping-time construction.
By the assumption that $\vec{f}$ has compact support
and Lemma \ref{lem:shifted-dyadic}(ii), there exist a shifted dyadic
grid $\mathcal{D}$ and $Q_0\in\mathcal{D}$ such that
\begin{align}\label{eq:root-size}
\operatorname{supp}(\vec f)\subset Q_0
\text{ and }\left|\operatorname{supp}(\vec f)\right|
\leq\frac{1}{2}|Q_0|.
\end{align}
Let $\mathcal K_{Q_0}:=\{\mathbf{0}\}$.
It follows from \eqref{eq:root-size} that
\begin{align*}
\left|\left\{x\in Q_0:\,\vec f(x)\in\mathcal K_{Q_0}
\right\}\right|\geq\frac{1}{2}|Q_0|.
\end{align*}
Suppose that $Q$ is a stopping cube and that a nonempty
compact convex set $\mathcal K_Q\subset\mathbb C^d$ has been
constructed such that
\begin{align}\label{eq:K-good}
\left|\left\{x\in Q:\,
\vec f(x)\in\mathcal K_Q
\right\}\right|\geq\frac12|Q|.
\end{align}
For any cube $Q\subset\mathbb R^n$, let $A_Q$ be a reducing
operator of order $q$ for $W$.
For any $x\in Q$, let
$v_Q(x):=d(A_Q\vec f(x),A_Q\mathcal K_Q)$.
Let $\delta:=\frac{1}{2^{n+3}}$ and let
\begin{align*}
\gamma_Q:=\inf\left\{\lambda\in[0,\infty):\,
\left|\left\{x\in Q:\,v_Q(x)>\lambda\right\}\right|
\leq\delta|Q|\right\}.
\end{align*}
Since $\vec f\in C_{\rm c}^\infty$,
$v_Q$ is bounded on $Q$, and hence $\gamma_Q<\infty$.
By Lemma \ref{lem:Sobolev-quantile}, we find that
\begin{align}\label{eq:level-upper}
\left|\left\{x\in Q:\,v_Q(x)>\gamma_Q
\right\}\right|\leq\delta|Q|
\end{align}
and
\begin{align}\label{eq:level-lower}
\left|\left\{x\in Q:\,v_Q(x)\geq\gamma_Q
\right\}\right|\geq\delta|Q|.
\end{align}
Let
\begin{align*}
H_Q:=\left\{x\in Q:\,v_Q(x)>\gamma_Q\right\}.
\end{align*}
Let $\operatorname{ch}(Q)$ be the collection of maximal dyadic
subcubes $P\subsetneq Q$ such that $|P\cap H_Q|>2\delta|P|$.
Using the Lebesgue differentiation theorem, we conclude that
$\bigcup_{P\in\operatorname{ch}(Q)}P$ contains almost every
point of $H_Q$. Moreover, by \eqref{eq:level-upper} 
and the disjointness of cubes in
$\operatorname{ch}(Q)$, we find that
\begin{align}\label{eq:child-sum}
\sum_{P\in\operatorname{ch}(Q)}|P|
\leq\frac{1}{2\delta}|H_Q|
\leq\frac{1}{2}|Q|.
\end{align}
If $P\in\operatorname{ch}(Q)$ and $\widehat P$ is its dyadic
parent, then $|\widehat P\cap H_Q|\leq2\delta|\widehat P|$.
Indeed, this inequality follows from the maximality of $P$ if
$\widehat P\subsetneq Q$, and from \eqref{eq:level-upper}
if $\widehat P=Q$. Hence,
\begin{align}\label{eq:child-good}
\left|P\cap H_Q\right|\leq
\left|\widehat P\cap H_Q\right|
\leq2^{n+1}\delta|P|=\frac{1}{4}|P|.
\end{align}
For any $P\in\operatorname{ch}(Q)$, let
\begin{align}\label{eq:K-child}
\mathcal K_P:=\mathcal K_Q+
\gamma_Q A_Q^{-1}\overline{B(\mathbf{0},1)},
\end{align}
where $\overline{B(\mathbf{0},1)}:=
\{\vec{z}\in\mathbb{C}^d:\,|\vec{z}|\leq1\}$
and the sum denotes the usual Minkowski sum.
Then $\mathcal K_P$ is again a nonempty compact convex subset of
$\mathbb C^d$. If $x\in P\setminus H_Q$, then
$v_Q(x)\leq\gamma_Q$, and hence
\begin{align*}
A_Q\vec f(x)\in A_Q\mathcal K_Q
+\gamma_Q\overline{B(\mathbf{0},1)}.
\end{align*}
Equivalently, $\vec f(x)\in\mathcal K_P$. Thus,
applying this and \eqref{eq:child-good}, we obtain
\begin{align*}
\left|\left\{x\in P:\,
\vec f(x)\in\mathcal K_P\right\}
\right|\geq\left|P\setminus H_Q\right|
\geq\frac{3}{4}|P|,
\end{align*}
which verifies \eqref{eq:K-good} for $P$.

Iterating this construction with $Q$ replaced by 
cubes in $\operatorname{ch}(Q)$, we obtain a stopping family $\mathcal{S}$
with root $Q_0$, nonempty compact convex sets
$\{\mathcal K_Q\}_{Q\in\mathcal{S}}$, and the truncation levels
$\{\gamma_Q\}_{Q\in\mathcal{S}}$. For any $Q\in\mathcal{S}$, let
\begin{align*}
E_Q:=Q\setminus
\bigcup_{P\in\operatorname{ch}(Q)}P.
\end{align*}
Observe that the sets
$\{E_Q\}_{Q\in\mathcal{S}}$ are pairwise disjoint.
Moreover, by \eqref{eq:child-sum}, 
for any $Q\in\mathcal{S}$, $|E_Q|\geq\frac{1}{2}|Q|$.
Thus, $\mathcal{S}$ is $\frac{1}{2}$-sparse.

We estimate each truncation level $\gamma_Q$ by the
corresponding local norm of the weighted gradient. For any $Q\in\mathcal{S}$, let
$\Delta_Q:=\{x\in Q:\,0<v_Q(x)<\gamma_Q\}$
($\Delta_Q:=\varnothing$ if $\gamma_Q=0$).
We claim that, for any $Q\in\mathcal{S}$,
\begin{align}\label{eq:level-bound}
\gamma_Q\lesssim[W]_{\mathscr A_{p,q}}^{\frac{1}{q}}|Q|^{-\frac{1}{q}}
\left\|WD\vec f\,\mathbf{1}_{\Delta_Q}\right\|_{L^p(\mathbb R^n,\mathbb C^{d\times n})}.
\end{align}
If $\gamma_Q=0$, then $\Delta_Q=\varnothing$ and
\eqref{eq:level-bound} is immediate. Thus, it suffices to 
prove \eqref{eq:level-bound} for $Q\in\mathcal{S}$ satisfying $\gamma_Q>0$.
Applying Lemma \ref{lem:Sobolev-smooth-truncation} with
$\vec g:=A_Q\vec f$, $\mathcal K:=A_Q\mathcal K_Q$, and $\gamma:=\gamma_Q$,
we obtain a real-valued continuously differentiable 
function $\psi_Q$ on $\mathbb R^n$ satisfying
$0\leq\psi_Q\leq\gamma_Q$,
\begin{align}\label{eq:psi-zero}
\psi_Q=0\text{ on }
\left\{x\in Q:\,A_Q\vec{f}(x)\in A_Q\mathcal K_Q\right\},
\end{align}
\begin{align}\label{eq:psi-level}
\psi_Q=\gamma_Q\text{ on }
\left\{x\in Q:\,v_Q(x)\geq\gamma_Q\right\},
\end{align}
and, for any $x\in Q$,
\begin{align}\label{eq:psi-grad}
|\nabla\psi_Q(x)|\lesssim
\left\|A_QD\vec f(x)\right\|\mathbf{1}_{\Delta_Q}(x).
\end{align}
By \eqref{eq:K-good}, the measure of the set
in \eqref{eq:psi-zero} is at least
$\frac{1}{2}|Q|$. It follows from
\eqref{eq:level-lower} that the measure of the set
in \eqref{eq:psi-level} is
at least $\delta|Q|$. Therefore,
\begin{align}\label{eq:osc-lower}
\gamma_Q|Q|
\lesssim
\|\psi_Q-(\psi_Q)_Q\|_{L^1(Q)},
\end{align}
where $(\psi_Q)_Q=\fint_Q \psi_Q(x)\,dx$.
Indeed, if
$(\psi_Q)_Q\leq\frac{\gamma_Q}{2}$, then
$\frac{\gamma_Q}{2}\leq|\psi_Q-(\psi_Q)_Q|$
on the set where $\psi_Q=\gamma_Q$, and hence \eqref{eq:osc-lower}
holds in this case. Otherwise, the same lower
bound holds on the set where $\psi_Q=0$, which further implies 
\eqref{eq:osc-lower} in this case.
Applying the Poincar\'e inequality 
(see, for instance, \cite[(7.45)]{gt01}), we obtain
\begin{align*}
\|\psi_Q-(\psi_Q)_Q\|_{L^1(Q)}
\lesssim
\ell(Q)\int_Q |\nabla\psi_Q(x)|\,dx.
\end{align*}
Combining this with \eqref{eq:psi-grad} and
\eqref{eq:osc-lower}, we conclude that
\begin{align}\label{eq:grad-L1}
\gamma_Q|Q|^{\frac{n-1}{n}}
\lesssim
\int_{\Delta_Q}
\left\|A_QD\vec f(x)\right\|\,dx.
\end{align}
Let $G:=WD\vec f$.
Observe that, for almost every $x\in Q$,
\begin{align*}
\left\|A_Q D\vec f(x)\right\|=
\left\|A_Q W^{-1}(x) G(x)\right\|
\leq\left\|A_QW^{-1}(x)\right\|\left\|G(x)\right\|
=: b_Q(x)\left\|G(x)\right\|.
\end{align*}
Applying H\"older's inequality together with
\eqref{eq:dual-reduce} when $p>1$ and
\eqref{eq:end-reduce} when $p=1$, we obtain
\begin{align}\label{eq:grad-weight}
\int_{\Delta_Q}\left\|A_Q D\vec f(x)\right\|\,dx
&\leq\|b_Q\|_{L^{p'}(Q)}
\|G\mathbf{1}_{\Delta_Q}\|_{L^p(\mathbb R^n,\mathbb C^{d\times n})}\nonumber\\
&\lesssim[W]_{\mathscr A_{p,q}}^{\frac{1}{q}}|Q|^{\frac{1}{p'}}
\|G\mathbf{1}_{\Delta_Q}\|_{L^p(\mathbb R^n,\mathbb C^{d\times n})}
\end{align}
(when $p=1$, we use the convention $p'=\infty$ and $\frac{1}{p'}=0$).
Note that $\frac{1}{p'}-\frac{n-1}{n}=-\frac{1}{q}$.
Combining \eqref{eq:grad-L1} and
\eqref{eq:grad-weight}, we conclude that
the claim \eqref{eq:level-bound} holds.

We now prove that the sets
$\{\Delta_Q\}_{Q\in\mathcal{S}}$ are pairwise disjoint. Since two
stopping cubes are either disjoint or nested, it suffices to show
$\Delta_Q\cap \Delta_R=\emptyset$ for $Q,R\in\mathcal{S}$ with
$R\subsetneq Q$. Let $P$ be the stopping child of $Q$
that contains $R$. If $x\in\Delta_Q\cap R$, then
$v_Q(x)<\gamma_Q$, and hence
\begin{align*}
\vec f(x)\in\mathcal K_Q+
\gamma_Q A_Q^{-1}\overline{B(\mathbf{0},1)}
=\mathcal K_P.
\end{align*}
Thus, $v_P(x)=0$. Moreover, \eqref{eq:K-child} yields
$\mathcal K_Q\subset\mathcal K_P$ for every stopping child $P$
of $Q$. Iterating this inclusion along the stopping chain from $P$
to $R$, we obtain $\mathcal K_P\subset\mathcal K_R$. Therefore,
$\vec f(x)\in\mathcal K_R$, so $v_R(x)=0$ and
$x\notin\Delta_R$. Hence the sets
$\{\Delta_Q\}_{Q\in\mathcal{S}}$ are pairwise disjoint.
For any $Q\in\mathcal{S}$, let $\lambda_Q
:=\|G\mathbf{1}_{\Delta_Q}\|_{L^p(\mathbb R^n,\mathbb C^{d\times n})}$.
Then
\begin{align}\label{eq:grad-sum}
\sum_{Q\in\mathcal{S}}\lambda^p_Q=\sum_{Q\in\mathcal{S}}
\|G\mathbf{1}_{\Delta_Q}\|_{L^p(\mathbb R^n,\mathbb C^{d\times n})}^p
\leq\|G\|_{L^p(\mathbb R^n,\mathbb C^{d\times n})}^p.
\end{align}

For any $Q\in\mathcal{S}$ and $x\in \mathbb{R}^n$, let 
$a_Q(x):=\|W(x)A_Q^{-1}\|$.
We next show that, 
for almost every $x\in \mathbb{R}^n$,
\begin{align}\label{eq:pointwise}
\left|W(x)\vec f(x)\right|\leq
\sum_{R\in\mathcal{S}}\gamma_Ra_R(x)\mathbf{1}_R(x).
\end{align}
From the definitions of $H_Q$ and $v_Q$, we deduce that,
for any $Q\in\mathcal{S}$ and for almost every $x\in E_Q$,
$x\notin H_Q$, and hence $v_Q(x)\leq\gamma_Q$. Thus,
$\vec f(x)\in\mathcal K_Q
+\gamma_Q A_Q^{-1}\overline{B(\mathbf{0},1)}$.
Iterating \eqref{eq:K-child} along the stopping
ancestors of $Q$, we obtain
\begin{align}\label{eq:K-sum}
\mathcal K_Q
+\gamma_Q A_Q^{-1}\overline{B(\mathbf{0},1)}
=\sum_{R\in\mathcal{S}, Q\subset R}
\gamma_R A_R^{-1}\overline{B(\mathbf{0},1)},
\end{align}
where the sum is a finite Minkowski sum. Applying
\eqref{eq:K-sum}, we conclude that there exist vectors
$\{\vec z_R\}_{R\in\mathcal{S}, Q\subset R}$ such that
$|\vec z_R|\leq\gamma_R$ and
\begin{align}\label{eq-f-sum}
\vec f(x)=\sum_{R\in\mathcal{S}, Q\subset R}
A_R^{-1}\vec z_R.
\end{align}
Multiplying both sides of \eqref{eq-f-sum} 
by $W(x)$ and taking the vector norm, 
we find that, for almost every $x\in E_Q$,
\begin{align}\label{eq:pointwise-E}
\left|W(x)\vec f(x)\right|\leq
\sum_{R\in\mathcal{S}, Q\subset R}
\gamma_R\left\|W(x)A_R^{-1}\right\|
=\sum_{R\in\mathcal{S}, Q\subset R}\gamma_R a_R(x).
\end{align}
By \eqref{eq:child-sum}, we find that,
for any $m\in\mathbb{N}$, the total measure of the
$m$-th stopping generation is at most $2^{-m}|Q_0|$.
Hence almost every point of $Q_0$ belongs to $E_Q$ for some
$Q\in\mathcal{S}$. Thus, it follows from \eqref{eq:pointwise-E}
that \eqref{eq:pointwise} holds for 
almost every $x\in Q_0$. Moreover,
\eqref{eq:pointwise} obviously holds for
$x\in \mathbb{R}^n\setminus Q_0$ because
$\vec f=\mathbf{0}$ outside $Q_0$.
This completes the proof of \eqref{eq:pointwise}.

Combining \eqref{eq:level-bound} and
\eqref{eq:pointwise}, we conclude that,
for almost every $x\in\mathbb R^n$,
\begin{align*}
|W(x)\vec f(x)|
\lesssim
[W]_{\mathscr A_{p,q}}^{\frac{1}{q}}
\sum_{Q\in\mathcal{S}}
|Q|^{-\frac{1}{q}}
\lambda_Qa_Q(x)\mathbf{1}_Q(x).
\end{align*}
To prove \eqref{eq:sobolev}, let $g\in L^{q'}$ be non-negative. 
Applying Tonelli's theorem, we obtain

\begin{align*}
\int_{\mathbb R^n}\left|W(x)\vec f(x)\right|g(x)\,dx
\lesssim[W]_{\mathscr A_{p,q}}^{\frac{1}{q}}\sum_{Q\in\mathcal{S}}\lambda_Q
|Q|^{-\frac1q}\int_Qa_Q(x)g(x)\,dx.
\end{align*}
If $p\in(1,\infty)$, then, from H\"older's inequality
and Lemma \ref{lem:off-diagonal-Carleson} with
$\alpha:=1$ and $\eta:=\frac12$, we deduce that
\begin{align}\label{eq:sob-p}
\int_{\mathbb R^n}\left|W(x)\vec f(x)\right|g(x)\,dx
&\lesssim[W]_{\mathscr A_{p,q}}^{\frac{1}{q}}
\left(\sum_{Q\in\mathcal{S}}\lambda_Q^p\right)^{\frac1p}
\left\{\sum_{Q\in\mathcal{S}}\left[
|Q|^{-\frac1q}\int_Qa_Q(x)g(x)\,dx
\right]^{p'}\right\}^{\frac1{p'}}\nonumber\\
&\lesssim[W]_{\mathscr A_{p,q}}^{\frac{1}{q}+\frac{1}{p'}}
\left(\sum_{Q\in\mathcal{S}}\lambda_Q^p\right)^{\frac1p}
\|g\|_{L^{q'}}.
\end{align}
If $p=1$, by H\"older's inequality and \eqref{eq:reduce}, 
we find that, for any $Q\in\mathcal{S}$,
\begin{align*}
|Q|^{-\frac1q}\int_Qa_Q(x)g(x)\,dx
\leq\left(\fint_Q[a_Q(x)]^q\,dx\right)^{\frac1q}
\|g\mathbf{1}_Q\|_{L^{q'}}\lesssim\|g\|_{L^{q'}},
\end{align*}
and hence
\begin{align}\label{eq:sob-one}
\int_{\mathbb R^n}\left|W(x)\vec f(x)\right|g(x)\,dx
\lesssim[W]_{\mathscr A_{1,q}}^{\frac{1}{q}}
\sum_{Q\in\mathcal{S}}\lambda_Q\|g\|_{L^{q'}}.
\end{align}

Combining \eqref{eq:grad-sum} with \eqref{eq:sob-p}
when $p>1$ and with \eqref{eq:sob-one} when $p=1$, 
and taking the supremum over
all non-negative $g\in L^{q'}$ with
$\|g\|_{L^{q'}}\leq1$, we conclude that
\begin{align*}
\left\|W\vec f\right\|_{L^q}
&\lesssim[W]_{\mathscr A_{p,q}}^{\frac{1}{q}+\frac{1}{p'}}
\left(\sum_{Q\in\mathcal{S}}\lambda_Q^p
\right)^{\frac{1}{p}}\\
&\lesssim[W]_{\mathscr A_{p,q}}^{\frac{1}{q}+\frac{1}{p'}}
\|G\|_{L^p(\mathbb R^n,\mathbb C^{d\times n})}=[W]_{\mathscr A_{p,q}}^{\frac{n-1}{n}}
\left\|WD\vec f\right\|_{L^p(\mathbb R^n,\mathbb C^{d\times n})},
\end{align*}
where the last equality follows from the identity 
$\frac{1}{q}+\frac{1}{p'}=\frac{n-1}{n}$. Thus,
\eqref{eq:sobolev} holds.

It remains to prove the optimality of the exponent
$\frac{n-1}{n}$ in \eqref{eq:sobolev}. 
Let $w\in A_{p,q}$ and let $W:=wI_d$. By \eqref{eq-Apq},
\eqref{eq:Apq}, \eqref{eq:A1q-scalar}, and \eqref{eq:A1q}, we find
that, for any $p\in[1,n)$,
$[W]_{\mathscr A_{p,q}}=[w]_{A_{p,q}}$.
Thus, taking a vector-valued function with only one non-zero component
in \eqref{eq:sobolev} yields the corresponding scalar-weighted
version of \eqref{eq:sobolev}.
The sharpness of the corresponding scalar weighted Sobolev
inequality was proved in \cite[Subsection 8.4]{cm12} using
$f_\delta(x)=e^{-|x|^\delta}$ and power weights 
$w_\delta(x):=|x|^{\frac{\delta-n}{q}}$
for $\delta\in(0,1)$ and $x\in\mathbb{R}^n$. Since our
inequality is established for functions in $C_{\rm c}^\infty$,
we instead use the following compactly supported smooth test functions
to show the sharpness of the 
scalar-weighted version of \eqref{eq:sobolev}. 
Suppose that $\phi$ is a non-zero smooth function on $\mathbb R$
supported in $(1,2)$.
For any $\delta\in(0,1)$ and for any $x\in\mathbb{R}^n$, let
\begin{align*}
w_\delta(x):=|x|^{\frac{\delta-n}{q}}
\text{ and }f_\delta(x):=\phi(|x|^\delta).
\end{align*}
Then $f_\delta$ is a smooth and compactly supported
function on $\mathbb{R}^n$. 
A standard computation for power weights yields
$[w_\delta]_{A_{p,q}}\sim\delta^{-1}$,
where the positive equivalence constants are independent of $\delta$.
Using polar coordinates and
the change of variables $t=r^\delta$, we obtain
\begin{align*}
\|w_\delta f_\delta\|_{L^q}^q
=\frac{\omega_{n-1}}{\delta}
\int_1^2|\phi(t)|^q\,dt
\end{align*}
and
\begin{align*}
\|w_\delta\nabla f_\delta\|_{L^p}^p=
\omega_{n-1}\delta^{p-1}\int_1^2
|\phi'(t)|^pt^{p(1+\frac{1}{q})-1}\,dt,
\end{align*}
where $\omega_{n-1}$ denotes the surface measure of the unit
sphere in $\mathbb R^n$. Hence, $\|w_\delta f_\delta\|_{L^q}
\sim\delta^{-\frac{1}{q}}$ and 
$\|w_\delta\nabla f_\delta\|_{L^p}\sim\delta^{\frac{1}{p'}}$,
where all positive equivalence constants are independent of $\delta$.
Note that $\frac{1}{q}+\frac{1}{p'}=\frac{n-1}{n}$.
If the exponent of the weight constant in \eqref{eq:sobolev}
could be replaced by $\beta<\frac{n-1}{n}$, then, for any 
$\delta\in(0,1)$, $1\lesssim \delta^{\frac{n-1}{n}-\beta}$,
where the implicit positive constant is independent of $\delta$.
Letting $\delta\to0^+$ yields a contradiction.
Thus, the exponent of the weight constant 
in \eqref{eq:sobolev} is optimal.
This completes the proof of Theorem
\ref{thm:matrix-weighted-Sobolev}.
\end{proof}

\begin{remark}
Let $n\geq2$ be an integer, $p\in(1,n)$, $q:=\frac{np}{n-p}$,
and $W\in \mathscr A_{p,q}$.
We finally compare Theorem \ref{thm:matrix-weighted-Sobolev}
with \cite[Theorem 1.5]{clm26}. For any $j\in\{1,\dots,n\}$,
the operator $I_1^j$ is defined by setting,
for any $\vec{g}\in L_{\rm c}^\infty$ 
and $x\in\mathbb{R}^n$,
\begin{align*}
I_1^j \vec{g}(x):=\frac{1}{\omega_{n-1}}\int_{\mathbb R^n}
\frac{x_j-y_j}{|x-y|^n}\vec{g}(y)\,dy.
\end{align*}
In \cite[Theorem 1.5]{clm26}, for
any $\vec f\in C_{\rm c}^\infty$,
using the representation
\begin{align*}
\vec f=\sum_{j=1}^n I_1^j\left(\partial_j\vec f\right)
\end{align*}
and the matrix-weighted estimate for $I_1^j$
established in \cite[Theorem 4.9]{clm26},
Cruz-Uribe et al. proved a matrix-weighted Sobolev inequality
as in \eqref{eq:sobolev} with the exponent $\frac{n-1}{n}$
of the weight constant replaced by
$\min\{\frac{(n-1)p'}{nq}+\frac{1}{q'},\,
\frac{n-1}{n}+\frac{p'-1}{q}\}$. Note that, 
for any $j\in\{1,\dots,n\}$, $I_1^j$ satisfies 
\eqref{eq:fsi-kernel-representation} and \eqref{eq:fsi-kernel-size}
with $\alpha=1$. From Corollary \ref{cor:fsi}, we deduce that,
for any $j\in\{1,\dots,n\}$ and $\vec g\in C_{\rm c}^\infty$,
\begin{align}\label{eq-bound-Ij}
\left\|WI_1^j\vec g\right\|_{L^q}
\lesssim[W]_{\mathscr A_{p,q}}^{
\frac{n-1}{n}\max\{1,\frac{p'}{q}\}}
\|W\vec g\|_{L^p}.
\end{align}
Although this estimate is better than the one obtained by applying
\cite[Theorem 4.9]{clm26} to $I_1^j$ for any $j\in\{1,\dots,n\}$,
for any $\vec{f}\in C_{\rm c}^\infty$,
repeating the argument in the proof of \cite[Theorem 1.5]{clm26}
with this improved estimate only yields
\begin{align}\label{eq:sob-repr}
\left\|W\vec f\right\|_{L^q}
\lesssim[W]_{\mathscr A_{p,q}}^{
\frac{n-1}{n}\max\{1,\frac{p'}{q}\}}
\left\|WD\vec f\right\|_{L^p(\mathbb R^n,\mathbb C^{d\times n})}.
\end{align}
When $p\in[\frac{2n}{n+1},n)$,
$q\geq p'$, and hence
\eqref{eq:sob-repr} 
provides the optimal exponent $\frac{n-1}{n}$ as in \eqref{eq:sobolev}.
However, when $p\in(1,\frac{2n}{n+1})$,
$p'>q$, and \eqref{eq:sob-repr}
provides a larger exponent $\frac{(n-1)p'}{nq}$ than the one in
\eqref{eq:sobolev}. 
Thus, the argument based on \eqref{eq-bound-Ij} 
does not yield the sharp matrix-weighted Sobolev
estimate for the whole range $p\in(1,n)$ (a similar phenomenon
already occurs in the scalar setting; see \cite[Remark 2.8]{lmpt10}).
Moreover, this argument cannot be used when $p=1$ because of
the range restriction of Corollary \ref{cor:fsi}.
This explains why the proof of Theorem
\ref{thm:matrix-weighted-Sobolev} uses a different
stopping-time argument.
\end{remark}


\smallskip
\noindent\textbf{Acknowledgements}\quad

The authors acknowledge the use of AI tools during the exploratory stage of this project.
All mathematical arguments and proofs in the final manuscript were checked
and written by the authors. The third author would like to thank Professor 
Tuomas Hyt\"onen for valuable suggestions on improving the presentation of this article.

\bigskip

\noindent Dachun Yang, Wen Yuan and Mingdong Zhang.

\medskip

\noindent Laboratory of Mathematics and Complex Systems
(Ministry of Education of China),
School of Mathematical Sciences,
Institute for Advanced Study, Beijing Normal University,
Beijing 100875, The People's Republic of China

\smallskip

\noindent{{\it E-mails:}}
\texttt{dcyang@bnu.edu.cn} (D. Yang)

\noindent\phantom{{\it E-mails:}}
\texttt{wenyuan@bnu.edu.cn} (W. Yuan)

\noindent\phantom{\it E-mails:}
\texttt{mdzhang@mail.bnu.edu.cn} (M. Zhang)
\end{document}